\numberwithin{equation}{section}
\numberwithin{figure}{section}
\newtheorem{theorem}{Theorem}[section]
\newtheorem{proposition}[theorem]{Proposition}
\newtheorem{lemma}[theorem]{Lemma}
\newtheorem{question}[theorem]{Question} 
\newtheorem{corollary}[theorem]{Corollary}
\theoremstyle{definition}
\newtheorem{definition}[theorem]{Definition}
\newtheorem{example}[theorem]{Example}
\newtheorem{remark}[theorem]{Remark}
\definecolor{myblue}{rgb}{0.6, 0.9, 1}
\newcommand{\Rmnum}[1]{\expandafter\@slowromancap\romannumeral #1@}
\definecolor{myblue}{rgb}{0.6, 0.9, 1}
\definecolor{mygreen}{rgb}{0,0,1}
\definecolor{purple}{rgb}{0.6,0.2,1}
\definecolor{orange}{rgb}{0.8,0,0.2}
\DeclareMathOperator{\Tr}{Tr}
\newcommand{\bC}{\mathbb{C}}
\newcommand{\bP}{\mathbb{P}}
\newcommand{\bQ}{\mathbb{Q}}
\newcommand{\bR}{\mathbb{R}}
\newcommand{\bN}{\mathbb{N}}
\newcommand{\ord}{\operatorname{ord}}
\newcommand{\eps}{\varepsilon}
\newcommand{\Res}{\operatorname{Res}}
\newcommand{\Kbar}{\overline{K}}
\newcommand{\Qbar}{\overline{\bQ}}
\newcommand{\<}{\langle}
\renewcommand{\>}{\rangle}
\newcommand\Lbar {\overline{L}}
\newcommand\iso{\simeq}
\newcommand{\mbf}{\mathbf}
\newcommand{\bb}{\mathbb}
\newcommand{\fra}{\mathfrak}
\newcommand{\Ph}{\operatorname{Ph}}
\newcommand{\cal}{\mathcal}
\newcommand{\ovl}{\overline}
\newcommand{\Fal}{\mathrm{Fal}}
\newcommand{\h}{\widehat{h}}
\newcommand{\Spec}{\operatorname{Spec}}
\newcommand{\Hom}{\operatorname{Hom}}
\newcommand{\GL}{\operatorname{GL}}
\newcommand{\berkP}{{\mathbb{P}^1_{\operatorname{Berk},v}}}
\newcommand{\berkA}{{\mathbb{A}^1_{\operatorname{Berk},v}}}
\newcommand{\Diag}{\operatorname{Diag}}
\newcommand{\calX}{\mathcal{X}}
\newcommand{\calH}{\mathcal{H}}
\newcommand{\calL}{\mathcal{L}}
\newcommand{\calM}{\mathcal{M}}
\newcommand{\calZ}{\mathcal{Z}}
\theoremstyle{definition} 
\newtheorem{defprop}{Definition/Proposition}[section]
\author{Niki Myrto Mavraki and Jit Wu Yap}
	\email{myrto.mavraki@utoronto.ca}
	\email{jitwuyap@mit.edu}
\begin{document}
\title[]{A quantitative dynamical Zhang fundamental inequality and Bogomolov-type problems}

\begin{abstract}
We prove a quantitative version of Zhang’s fundamental inequality for heights attached to polarizable endomorphisms. As an application, we obtain a gap principle for the Néron--Tate height on abelian varieties over function fields of arbitrary transcendence degree and characteristic zero, extending the result of Gao--Ge--Kühne over $\overline{\mathbb{Q}}$. We also establish instances of effective gap principles for regular polynomial endomorphisms of $\mathbb{P}^2$, in the sense that all constants can are explicit. These yield effective instances of uniformity in the dynamical Bogomolov conjecture in both the arithmetic and geometric settings, including examples in prime characteristic.
\end{abstract}

	\date{\today}

	\maketitle

\section{Introduction}
The dynamical Manin--Mumford and dynamical Bogomolov conjectures, proposed by Shou-Wu Zhang \cite{Zha98}, are far-reaching generalizations of the classical Manin–Mumford and Bogomolov conjectures, proved respectively by Raynaud \cite{Ray83} and by Ullmo \cite{Ulm98} and Zhang \cite{Zha98}. 
In formulating them, Zhang drew upon the analogy between torsion points on abelian varieties and preperiodic points in dynamics.
These conjectures ask whether, given a polarized endomorphism of a projective variety, one can characterize the subvarieties that contain a Zariski-dense set of preperiodic points, or more generally, generic sequences of points of small Call–Silverman canonical height. 
They were later reformulated by Ghioca, Tucker and Zhang \cite{GTZ11,GT} and extended to the setting of function fields of characteristic zero by Gauthier--Vigny \cite{GV24}. In the dynamical setting, both conjectures remain widely open, with only a few cases currently understood; that of polarized endomorphisms of $(\bP^1)^n$ \cite{GNY19,MSW} and of regular polynomial endomorphisms of $\bP^2$ and curves satisfying an assumption regarding their intersection with the line at infinity \cite{DFR23}.

An important tool introduced by Zhang in his proof of the classical Bogomolov conjecture for algebraic tori~\cite{Zha92} is the \emph{fundamental inequality}.  
This was later generalized to adelically metrized line bundles in~\cite{Zha95, Zha95b}, and extended to function fields by Gubler \cite{Gub07b, Gub08}. 
Let $K$ be either a number field or a function field, by which we mean the function field of a regular projective variety defined over an algebraically closed field of arbitrary characteristic. 
Let $\Phi:\bP^N_{\Kbar}\rightarrow\bP^N_{\Kbar}$ be an endomorphism of degree $d \ge 2$, and denote by $\hat{h}_{\Phi}: \bP^N(\Kbar)\to \bR_{\ge 0}$ its associated Call--Silverman canonical height \cite{CS93}, defined by an iteration procedure analogous to the N\'eron-Tate height on abelian varieties. 
Building on the arithmetic intersection theory of Gillet--Soulé~\cite{GS90}, Zhang extended the canonical height $\hat{h}_{\Phi}$ to positive-dimensional subvarieties (see \S\ref{sec: heights} and \S\ref{canonical heights} for precise definitions of heights). 
The fundamental inequality then asserts that for each $\delta < \tfrac{1}{\dim X}$, the set 
\begin{equation}\label{fundamental ineq}
\{\, x \in X(\Kbar) :~ \hat{h}_{\Phi}(x) < \delta \, \hat{h}_{\Phi}(X) \,\}
\end{equation}
is not Zariski dense in $X$. 
This inequality provides a bound to the canonical heights of points on $X$ relative to the canonical height of $X$ itself, at least for points outside a proper closed subset $Z$ of $X$. 
A number of recent works have focused on 
\emph{uniform} or \emph{quantitative} refinements of Bogomolov-type  
phenomena, both in the arithmetic and geometric settings (see for instance 
\cite{BZ, DP99, DP02, DP07, GGK21, Kuh21, DGH21, DKY20, DKY21, LSW21, MS22, GTV23, DM24, Yua24}). 

\smallskip

We prove a quantitative version of Zhang's inequality~\eqref{fundamental ineq}. 
At the expense of reducing $\delta$ we provide an explicit control of the degree of an `exceptional' closed subvariety containing all relatively small points of $X$. 
\begin{theorem}[Quantitative fundamental inequality]\label{quantitative Zhang}
Let $K$ be either a number field or a function field. 
Let $N,d\ge 2$. Let $\Phi:\mathbb{P}_{\overline{K}}^N\to \mathbb{P}_{\overline{K}}^N$ be an endomorphism of degree $d\ge 2$ and  let $X\subset\mathbb{P}_{\overline{K}}^N$ be a subvariety with $r\coloneqq \dim X\ge 1$. Then the set 
\begin{align*}
\left\{x\in X(\overline{K})~:~\hat{h}_{\Phi}(x)<\frac{1}{2(r+1)(4e)^{r+1}}\hat{h}_{\Phi}(X)\right\}
\end{align*}
is contained is a strict subvariety $Z\subsetneq X$ with degree $\deg Z$ at most
\[
3N (r+1) (\deg X)^2 \left((2+6h(N+1))(r+1)^2(4e)^{r+1}\frac{(c_1(N,d) h(\Phi) + c_2(N,d) +c_0(N))}{\hat{h}_{\phi}(X)}+1\right)^{N+1},
\]where $e$ is Euler's number, $h(\Phi)$ denotes the Weil height of the projective point defined by the coefficients of $\Phi$, and we may explicitly take
\begin{align}\label{eq:constants}
\begin{split}
c_0(N) &= \frac{7}{2}h(N+1)=
  \begin{cases}
    \tfrac{7}{2}\log(N+1) & \text{if $K$ is a number field}\\
    0 & \text{if $K$ is a function field},
  \end{cases}\\[6pt]
c_1(N,d) &= (N+1)d^N + 1,\\[4pt]
c_2(N,d) &= 
  \begin{cases}
    (N+1)(d+1)^N (d^N + 1)^{(N+1)(d+2)^N} & \text{if $K$ is a number field}\\
    0 & \text{if $K$ is a function field}.
  \end{cases}
\end{split}
\end{align}
 If $K$ is a function field, then the heights are taken relative to the places of $K$.
\end{theorem}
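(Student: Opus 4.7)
The plan is to decouple the dynamical content of the statement from its geometric content: first reduce the theorem to an analogous quantitative Zhang-type inequality for the naive Weil height $h$ on $\mathbb{P}^N$, and then prove that inductively via an arithmetic B\'ezout argument. The constants $c_0(N)$, $c_1(N,d)$, $c_2(N,d)$ displayed in~\eqref{eq:constants} have exactly the shape of the effective Call--Silverman comparison
\[
|\hat h_\Phi(y) - h(y)| \;\leq\; c_1(N,d)\, h(\Phi) + c_2(N,d) + c_0(N) \qquad (y \in \mathbb{P}^N(\overline{K})),
\]
obtained by telescoping the defining limit $\hat h_\Phi = \lim d^{-n}(h \circ \Phi^n)$ and inserting explicit resultant bounds at each step. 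First I would establish this comparison together with its analogue $|\hat h_\Phi(X) - h(X)| \leq (r+1)(\cdots)$ for the height of a subvariety. The hypothesis $\hat h_\Phi(x) < \frac{1}{2(r+1)(4e)^{r+1}}\hat h_\Phi(X)$ then translates into an analogous inequality for $h(x)$ versus $h(X)$, up to an additive error of order $c_1 h(\Phi) + c_2 + c_0$, which is precisely why that combination enters the degree bound for $Z$.

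The core of the argument is an effective, inductive version of Zhang's theorem of successive minima applied to $(X,h) \subset (\mathbb{P}^N, h)$. The base case $r = 0$ is trivial. For $r \geq 1$ I would argue by contradiction: assuming the set of relatively small points is not contained in any strict subvariety of the prescribed degree, I would construct, via a Siegel-type interpolation, a hypersurface $H \subset \mathbb{P}^N$ of explicit degree $D$ (depending on $h(X)$, $h(\Phi)$, and the comparison constants) such that $X \not\subset H$ but $H$ contains many small points of $X$. Since the space of forms of degree $D$ on $\mathbb{P}^N$ has dimension $\binom{N+D}{N} \sim D^N$, tuning $D$ via this interpolation is what supplies the $(\cdots)^{N+1}$ factor in the final bound on $\deg Z$. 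The arithmetic B\'ezout inequality then yields
\[
h(X \cap H) \;\geq\; \deg H \cdot h(X) - O\bigl(h(H)\deg X\bigr),
\]
and applying the inductive hypothesis to the irreducible components of $X \cap H$ produces the desired $Z$. Each descent in dimension loses a constant factor of roughly $4e$, governed by binomial bounds of the form $\binom{n}{k} \leq (en/k)^k$; iterating $r+1$ times produces the stated $(4e)^{r+1}$ in the threshold.

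The main expected obstacle is the construction of the hypersurface $H$ with simultaneously controlled degree, coefficient height, and number of small points on $X$. This is a Siegel/interpolation problem whose quantitative form must be explicit enough that a factor of $4e$ — rather than a pessimistic dimension-dependent constant like $r^r$ — emerges at each inductive step. Tuning the parameters so that arithmetic B\'ezout matches the interpolation estimate, and then carefully bookkeeping degrees through the $r+1$ recursive steps to produce the explicit prefactor $3N(r+1)(\deg X)^2 \bigl(\cdots\bigr)^{N+1}$, is where the bulk of the calculation lies and is the step I would expect to be most delicate.
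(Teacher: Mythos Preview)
Your reduction step—comparing $\hat h_\Phi$ to $h$ via an effective Call--Silverman bound with exactly the constants $c_0,c_1,c_2$—is correct and matches the paper. But the core argument is different, and your plan has a real gap. The paper does \emph{not} induct on $r=\dim X$ and does not use arithmetic B\'ezout. Instead it \emph{iterates}: one chooses
\[
m \;\approx\; \log_d\!\left(\frac{(r+1)^2(4e)^{r+1}\bigl(c_1 h(\Phi)+c_2+c_0\bigr)}{\hat h_\Phi(X)}\right)
\]
so that $\hat h_{\mathrm{Ph}}(\Phi^m(X))$ is large enough to apply the David--Philippon small-section construction (absolute Siegel lemma plus elimination-theoretic bounds on the arithmetic Hilbert function) \emph{once} to $\Phi^m(X)$. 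This yields a single form $q$ of degree $O\bigl(N(r+1)\deg X\bigr)$ vanishing on all Weil-small, hence all $\Phi$-small, points of $\Phi^m(X)$. One then takes $Z=\Phi^{-m}\bigl(Z(q)\cap\Phi^m(X)\bigr)$ and uses $\deg\Phi^{-m}(\cdot)=d^{m(N-\dim(\cdot))}\deg(\cdot)$ together with $\deg\Phi^m(X)\le d^{m\dim X}\deg X$: the factor $(\cdots)^{N+1}$ in the degree bound is exactly $d^{m(N+1)}$ coming from this dynamical pullback, not from counting sections of $\mathcal O(D)$ on $\mathbb P^N$. Likewise the $(4e)^{r+1}$ appears inside the small-section estimate via the ratio of arithmetic to geometric Hilbert functions, not from $r+1$ inductive descents.

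The concrete problem with your plan is the translation step. Subtracting the additive error $c_1 h(\Phi)+c_2+c_0$ from both sides of $\hat h_\Phi(x)<\frac{1}{2(r+1)(4e)^{r+1}}\hat h_\Phi(X)$ gives no usable Weil-height inequality when $\hat h_\Phi(X)$ is small compared to that error—which is precisely the regime the theorem must cover (the degree bound is permitted to blow up as $\hat h_\Phi(X)\to 0$, but the containment must still hold). Iterating by $\Phi^m$ amplifies $\hat h_\Phi(X)$ by $d^m$ until the additive comparison error becomes negligible; this is the missing idea, and it is also what makes the ratio $(c_1 h(\Phi)+c_2+c_0)/\hat h_\Phi(X)$ enter the degree bound with exponent $N+1$ rather than something tied to $r$. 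A secondary issue: your displayed ``arithmetic B\'ezout'' inequality has the wrong sign—B\'ezout bounds $h(X\cap H)$ from \emph{above}; the lower bound you want requires $H$ to be the zero locus of a \emph{small} section, which is exactly what the Siegel/David--Philippon construction supplies directly, with no induction on dimension needed.
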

Recall that $\Phi$-preperiodic points have canonical height zero; in particular our result bounds the geometry of $\Phi$-preperiodic points in $X$. 
In fact we prove an analogous statement for all polarized endomorphisms; see Theorem \ref{quantitative Zhang2}. 
It is clear that the degree of the exceptional $Z$ must depend on $N$, $\dim X$, $\deg X$, and $\deg \Phi$. 
In fact, $(\deg X)^2$ is the optimal dependence on the growth of $\deg X$, as it is easy to construct curves of arbitrarily large height with at least $\frac{1}{2}(\deg X)^2$ many preperiodic points. 
Beyond this inevitable dependence, we expect the remaining factors in our bound to be uniformly bounded in terms of these parameters, except in a few natural counterexamples that will be discussed later; see \S\ref{sec: bigness_question}. 
Hence, Theorem~\ref{quantitative Zhang} can be viewed as essentially optimal. 

As a simple illustration we have the example. 

\begin{example}\label{expicit prep points}
Let $C\subset \bP_{\Qbar}^2$ be a curve given by the vanishing of a degree $2$ irreducible homogeneous polynomial $F_C=0$. 
Let $\Phi: \bP_{\Qbar}^2\to \bP_{\Qbar}^2$ be a degree $2$ endomorphism. 
If the projective height of the point defined by the coefficients of $F_C$ satisfies that 
$$h(F_C)\ge 26 h(\Phi)+ 54\cdot 5^{48}+8 \ge 2(c_1(2,2)h(\Phi)+c_2(2,2)+c_0(2)),$$ 
then 
$$\#(C\cap \mathrm{Prep}(\Phi))\le 48\cdot 4074^3,$$
where $\mathrm{Prep}(\Phi)$ denotes the set of $\Phi$-preperiodic points. 
Here we use our slight strengthening of Ingram's \cite[Theorem 1]{Ing22} in Proposition \ref{height diff} to deduce that $\hat{h}_{\Phi}(C)\ge h(F_C)- c_1(2,2)h(\Phi)-c_2(2,2)-c_0(2)$. 
\end{example}

Our result, albeit the more general version stated as Theorem \ref{quantitative Zhang2}, also allows us to explicitly control the geometry of $\Phi$-preperiodic subvarieties of fixed degree and dimension, as we illustrate in the next example. 
\begin{example}\label{few lines}
Fix a hypersurface in $\bP^2\times\bP^2$ given by the Zariski closure of 
$$H=\{([x:tx:1],[x':tx':1])~:~t,~x,~x'\in\Qbar\}^{\mathrm{Zar}}\subset \mathbb{P}^2\times\mathbb{P}^2.$$
For each $t\in \Qbar$ denote the line with slope $t$ by 
$L_t=\{[x:tx:1]~x\in\Qbar\}^{\mathrm{Zar}}.$
Let $\Phi:\bP_{\Qbar}^2\to\bP_{\Qbar}^2$ be a degree $d\ge 2$ endomorphism. 
We may apply our Theorem \ref{quantitative Zhang2} to find an explicit constant 
$$m\coloneqq m(d,\max\{1,h(\Phi)\}/\hat{h}_{(\Phi,\Phi)}(H)),$$ depending only on $d$ and $\max\{1,h(\Phi)\}/\hat{h}_{(\Phi,\Phi)}(H)$, such that 
\begin{align}\label{prep lines}
\#\{t~:~L_t \text{ is }\Phi\text{-preperiodic}\}\le m.
\end{align}
Indeed, we first apply Theorem \ref{quantitative Zhang2} along with Proposition \ref{height diff} to the split map $(\Phi,\Phi):(\bP^2)^2\to(\bP^2)^2$ and to the hypersurface $H$, to find the explicit constant $m$ such that
\begin{align*}
    H\cap \mathrm{Prep(\Phi,\Phi)}\subset Z\subsetneq H,
\end{align*}
for closed strict subvariety $Z\subsetneq H$ with
$\deg Z\le m$. 
Then we note that for each $t$ such that $L_t$ is $\Phi$-preperiodic we have that $L_t\times L_t\subset H$ has to be one of the $2$-dimensional components of $Z$, leading to \eqref{prep lines}.
\end{example}

We point out that some special cases of our theorem have been previously established. 
Indeed, our proof is inspired by the work of David--Philippon~\cite{DP99}, who proved an analogous result in the setting of tori over~$\Qbar$ (compare with Theorem \ref{David--Philippon--mult}). 
They later used it to prove uniformity in the Bogomolov conjecture for powers of elliptic curves \cite{DP07}.
In the setting of the diagonal curve and for specific endomorphisms of $(\mathbb{P}^1)^2$ corresponding to elliptic curves and to quadratic polynomials, 
such a result has been obtained by DeMarco--Krieger--Ye in~\cite[Theorem 1.8]{DKY20} and~\cite[Theorem 1.9]{DKY21}. 
These results played a key role in the authors' uniform control on the number of common preperiodic points of such maps.   
More recently, Gauthier extended their result to arbitrary endomorphisms of $(\bP^1)^2$, while still focusing on the diagonal curve \cite[Theorem B]{Gau24}. 

The methods in \cite{DKY20,DKY21, Gau24} rely on potential theory as well as explicit H\"older estimates in both archimedean and non-archimedean spaces -- the authors ibid. build on ideas from \cite{FRL06} which lead to quantitative arithmetic equidistribution results.  
Zhang’s original proof of the fundamental inequality employs the arithmetic Hilbert–Samuel theorem \cite{GS92}, the proof of which requires careful use of complex and non-archimedean analysis on the subvariety $V$. Thus, it is difficult to make the required estimates explicit as $V$ varies. A similar difficulty arises when attempting to use the second author’s quantitative equidistribution theorem \cite{Yap24}, which extends the results of \cite{FRL06} to higher dimensional polarized dynamical systems. 

In contrast, our proof follows the more algebraic approach of~\cite{DP99}, 
based on geometry of numbers and elimination theory. 
We interpret dynamical heights of subvarieties as limits of heights of their Chow forms (as in \cite{DP99, Hut09}), as opposed to  arithmetic intersection numbers. 
Using a specialization argument based on the construction of a transcendental generic curve, we reduce the proof to the case where the ground field is either a number field or a function field of transcendence degree one (see \S\ref{sec: generic} and \cite{Gub08}). 
In this setting, we appeal to the absolute Siegel theorem \cite{rth96}, estimates for arithmetic Hilbert functions arising from elimination theory and linear algebra, and to explicit bounds for the difference between canonical and Weil heights as in \cite{Ing22}.

Next, we give two applications of our result towards uniformity in Bogomolov-type problems, both in the classical setting of abelian varieties and in the dynamical context. 

\subsection{Application 1: A geometric gap principle \`a la Gao--Ge--K\"uhne}

As a first application, we establish a geometric gap principle for the N\'eron-Tate height on abelian varieties over characteristic zero function fields. 
Our result is inspired by the work of Gao--Ge--K\"uhne \cite{GGK21}, who established the analogous gap principle over number fields -- leading to uniformity in the Mordell--Lang conjecture. 

Recall that a subvariety $X$ of an abelian variety $A$ is said to \emph{generate} $A$ if $A$ is the smallest abelian subvariety containing $X - X$. 
We write $X^{\circ}$ for the complement in $X$ of all positive-dimensional cosets contained in $X$. This will be a Zariski open subset of $X$, possibly empty. 
 Finally, we let $h_{\Fal}(A)$ denote the absolute (stable) Faltings height of $A$; see \S\ref{heights on av} for our definition. We prove the following.

\begin{theorem}\label{uniform_geometric_Bogomolov}
Let $g\ge 2$ and $D\ge 1$.  
There exist constants $c=c(g,D)>0$ and $m=m(g,D)>0$ with the following property.  
Let $K$ be a function field of characteristic zero, let $A$ be an abelian variety of dimension $g$ defined over $\Kbar$, and let $L$ be an ample symmetric line bundle on $A$.  
Let $X\subseteq A_{\ovl K}$ be an irreducible proper subvariety with $\deg_L(X)\le D$ that generates~$A$, and assume that $X^\circ\neq\emptyset$.  
Then the set
$$
\{\,x\in X(\ovl K)\;:\;\h_{A,L}(x)\le c\, h_{\mathrm{Fal}}(A)\,\}
$$
is contained in a proper subvariety $Y\subsetneq X$ with $\deg_L(Y)\le m$. 
Moreover, $c$ and $m$ can be computed explicitly in terms of $D,g$ and the constants $c_{\mathrm{DGH}}$ from Theorem \ref{HeightGap} and $c_S$ from Theorem \ref{HeightGeometricCompare1}.
\end{theorem}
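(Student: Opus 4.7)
The plan is to combine the quantitative fundamental inequality (Theorem~\ref{quantitative Zhang2}) with the function field version of the Dimitrov--Gao--Habegger height gap (Theorem~\ref{HeightGap}), together with the geometric height comparison (Theorem~\ref{HeightGeometricCompare1}) that controls the Weil height of the defining data of the natural polarized endomorphism.

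First, I would view $\h_{A,L}$ as the Call--Silverman canonical height of the polarized endomorphism $([2]_A, L)$ on $A$; since $L$ is symmetric and ample, $[2]_A^* L \cong L^{\otimes 4}$, so this is indeed a polarization of degree $4$. Fix a projective embedding $A \hookrightarrow \bP^N$ via $L^{\otimes n}$ for a suitable fixed~$n$; the projective degree of $X$ is then bounded by $n^{\dim X}\deg_L(X) \leq n^g D$, uniformly in $g,D$. This places us in the setting to which Theorem~\ref{quantitative Zhang2} is directly applicable.

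Next, I would invoke Theorem~\ref{HeightGap} to obtain a pointwise lower bound
\[
\h_{A,L}(x) \;\geq\; c_{\mathrm{DGH}}(g,D)\, h_{\mathrm{Fal}}(A)
\]
for all $x$ in a Zariski open dense subset $U\subseteq X$; the hypotheses that $X$ generates $A$ and $X^\circ\neq\emptyset$ are precisely what is needed. Since $U$ is Zariski dense and $\h_{A,L}(X)$ is comparable to the essential infimum of $\h_{A,L}$ on $X$ by Zhang's classical fundamental inequality (losing at most a factor of $r+1\leq g+1$), I would extract a lower bound
\[
\h_{A,L}(X) \;\geq\; c_1(g,D)\, h_{\mathrm{Fal}}(A)
\]
for an explicit $c_1(g,D)>0$. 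Applying Theorem~\ref{quantitative Zhang2} to $([2]_A, L)$ and $X$ with $\delta = 1/(2(r+1)(4e)^{r+1})$ then produces a strict subvariety $Y\subsetneq X$ containing $\{x\in X : \h_{A,L}(x) < \delta\, \h_{A,L}(X)\}$, whose degree is controlled in terms of $N, r, \deg X$ and the ratio $h(\Phi)/\h_{A,L}(X)$. Theorem~\ref{HeightGeometricCompare1} supplies a bound of the form $h(\Phi)\leq c_S\, h_{\mathrm{Fal}}(A)+O(1)$, so combining with the lower bound on $\h_{A,L}(X)$ above shows that this ratio, and hence $\deg Y$, is bounded by a quantity $m(g,D)$ depending only on $g,D,c_{\mathrm{DGH}},c_S$. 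Taking $c = \delta\, c_1$ then yields the theorem.

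The main obstacle, and the step I expect to be hardest to execute cleanly, is passing from the pointwise gap supplied by Theorem~\ref{HeightGap} to a uniform lower bound on the global $\h_{A,L}(X)$ with an effective constant depending only on $g$ and $D$: one must ensure that the Zariski open locus where the pointwise inequality holds is controlled uniformly across all $(A,X)$ satisfying the hypotheses, and that the classical Zhang inequality can be invoked with a quantitative constant. Once this is done, the remaining combination of Theorems~\ref{quantitative Zhang2} and~\ref{HeightGeometricCompare1} is essentially bookkeeping.
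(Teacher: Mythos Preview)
Your overall architecture is correct and matches the paper's: bound $\hat{h}_{A,L}(X)$ below by a multiple of $h_{\mathrm{Fal}}(A)$, bound the comparison constant above by a multiple of $h_{\mathrm{Fal}}(A)$, and feed both into Theorem~\ref{quantitative Zhang2}. Two misreadings and two genuine omissions are worth flagging.

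First, Theorem~\ref{HeightGap} is stated over $\overline{\mathbb{Q}}$, not over a function field, and it already delivers a lower bound for $\hat{h}_{A,L}(X)$ (the height of the \emph{subvariety}), not merely a pointwise bound on $X^\circ$; the passage through Zhang's inequality is already carried out in its proof. So your ``main obstacle'' is not an obstacle. The real work you are skipping is transferring this height gap from $\overline{\mathbb{Q}}$ to arbitrary characteristic-zero function fields. The paper does this separately (Theorem~\ref{GeometricHeightGap2}) by induction on the transcendence degree of $K$, using a specialization result for canonical heights of subvarieties (Proposition~\ref{SilvermanSpecialization1}) together with the generic-curve reduction. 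This step is not bookkeeping.

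Second, Theorem~\ref{HeightGeometricCompare1} does not bound $h(\Phi)$; it bounds $\sup_{x\in A(\overline{K})}\bigl|\hat{h}_{A,L}(x)-h(\iota_A(x))\bigr|$, which is exactly the quantity $C(\phi,\iota)$ appearing in Theorem~\ref{quantitative Zhang2}. You should apply Theorem~\ref{quantitative Zhang2} directly with this $C(\phi,\iota)$, rather than detouring through Theorem~\ref{quantitative Zhang} and attempting to control the height of an extension of $[2]_A$ to $\mathbb{P}^N$.

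Finally, both Theorem~\ref{GeometricHeightGap2} and Theorem~\ref{HeightGeometricCompare1} are stated for $(A,\phi_L,\nu)\in\mathcal{A}_{g,3}$, i.e.\ for principally polarized abelian varieties with level-$3$ structure, whereas the theorem to be proved allows an arbitrary ample symmetric $L$. The paper handles this by passing along an isogeny $u_0:A\to A_0$ to a ppav with $u_0^*L_0\simeq L$ and $\deg u_0$ bounded in terms of $g,D$ (using that $X$ generates $A$ to bound $\deg_L A$), then pulling the exceptional set back; this reduction should not be omitted.
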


This result was previously not known outside of the case of curves in their Jacobian, where it was recently established by Yuan \cite{Yua24} and independently by Looper--Silverman--Wilms \cite{LSW21} for one dimensional function fields of arbitrary characteristic. 
The authors in both articles relied on  Cinkir's result \cite{Cin11}, whose proof, as also noted by Gao–Ge–Kühne \cite{GGK21}, seems very hard to generalize to higher dimensions; it is based on explicit formulas for admissible pairings on metrized graphs which are intrinsically one-dimensional. Note that we are also able to handle the case of higher dimensional function fields.

For the proof we appeal to Dimitrov--Gao--Habegger's height inequality \cite[Theorem 1.6]{DGH21}, as presented in Gao--Ge--K\"uhne's \cite[Proposition 4.1]{GGK21}, as well as Zhang's theorem of Successive Minima \cite{Zha98}, and to ideas of \cite{GGK21} to reduce to the case of principally polarized abelian varieties with level structure. 
We argue with induction on the transcendence degree of the function field, and along the way use several 
 specialization results as in \cite{CS93, Ing22, GV24}, as well as a construction of a transcendental curve in \cite{Gub08}. 

Recall also, as pointed out in \cite{GGK21}, that the assumption that $X$ generates $A$ in Theorem~\ref{uniform_geometric_Bogomolov} cannot be omitted, nor can the conclusion be strengthened to a bound on the cardinality of the set of small points.  
However, by restricting to points with height sufficiently close to zero, we obtain the following result, which may be viewed as a uniform geometric Bogomolov conjecture for function fields of characteristic zero for traceless abelian varieties.

\begin{theorem}\label{uniform CGHX}
  Let $g\ge 2$ and $D\ge 1$. Then there exists $N = N(g,D)$ and $\eps = \eps(g,D)$ with the following property. 
  Let $K$ be the function field of a regular projective variety $B$ over an algebraically closed field $k$ of characteristic zero. Fix a very ample line bundle on $B$ defining a Weil height over $K$.  
  Let $A$ be an abelian variety over $K$ with a symmetric ample line bundle $L$ and such that $\Tr_{\ovl{K}/\bb{C}}(A) = 0$. Then for any subvariety $V \subseteq A_{\ovl{K}}$ with $\deg_L V \leq D$, we have
$\#\{x \in V^{\circ}(\ovl{K}) \mid \h_{A,L}(x) \leq \eps\} \le N.$
\end{theorem}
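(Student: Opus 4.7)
The plan is to reduce Theorem~\ref{uniform CGHX} to the gap principle of Theorem~\ref{uniform_geometric_Bogomolov}, using the trace-zero hypothesis to replace the $A$-dependent threshold $c\cdot h_{\Fal}(A)$ by a uniform constant. I proceed by induction on $d:=\dim V$. The base case $d=0$ is immediate: $V$ has at most $\deg_L V\le D$ points, so $N=D$ suffices for any choice of $\eps$.

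For the inductive step, decompose $V$ into irreducible components (whose degrees sum to at most $D$) and treat each separately; henceforth assume $V$ is irreducible and that $V^\circ\neq\emptyset$. If $V$ does not generate $A$, write $V\subseteq y+A'$ for the smallest abelian subvariety $A'\subseteq A$ realizing this (and some $y\in A(\ovl K)$), and replace $(A,L,V)$ by $(A',L|_{A'},V-y)$; the trace-zero property descends to $A'$, the degree $\deg_L V$ is preserved, and $V^\circ$ agrees with its analogue in $A'$, so we may assume $V$ generates $A$. Theorem~\ref{uniform_geometric_Bogomolov} then supplies constants $c=c(g,D)>0$ and $m=m(g,D)>0$ along with a proper subvariety $Y\subsetneq V$ of degree at most $m$ such that every $x\in V(\ovl K)$ with $\h_{A,L}(x)\le c\cdot h_{\Fal}(A)$ lies in $Y$. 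Granting a uniform lower bound $h_{\Fal}(A)\ge\delta(g)>0$ valid for all traceless abelian varieties of dimension at most $g$ over characteristic-zero function fields, set $\eps:=c(g,D)\cdot\delta(g)$; then $\{x\in V^\circ:\h_{A,L}(x)\le\eps\}\subseteq V^\circ\cap Y$. Decompose $Y=\bigcup_i Y_i$ into irreducible components: any $Y_i$ that is a positive-dimensional coset contained in $V$ contributes nothing to $V^\circ$, while for the remaining components $V^\circ\cap Y_i\subseteq Y_i^\circ$ (any positive-dimensional coset in $Y_i\subseteq V$ is a coset in $V$). Each such $Y_i$ has $\dim Y_i<d$ and $\deg_L Y_i\le m$, so the inductive hypothesis bounds $\#\{x\in Y_i^\circ:\h_{A,L}(x)\le\eps\}$ by $N(g,m)$. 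Summing over the at most $m$ components produces the recursion $N(g,D)\le m(g,D)\cdot N(g,m(g,D))$, which closes after at most $g$ iterations; the uniform $\eps$ is then the minimum over these finitely many levels of $c\cdot\delta$.

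The main obstacle is establishing the uniform positive lower bound $h_{\Fal}(A)\ge\delta(g)$ for traceless abelian varieties of dimension at most $g$ across all characteristic-zero function fields. Positivity for an individual traceless $A$ follows from non-isotriviality, but uniformity across all base fields $K$ requires a moduli-theoretic input: one passes via an isogeny reduction to principally polarized abelian varieties with level structure (as in the proof of Theorem~\ref{uniform_geometric_Bogomolov}), and then invokes properness together with Arakelov-type estimates on the modular stack $\mathcal{A}_g$ to bound $h_{\Fal}$ from below on the non-isotrivial locus. A secondary bookkeeping issue is the ambient-$A$ shift when passing to components $Y_i$: the generating abelian subvariety may shrink, but the trace-zero hypothesis descends and all constants depend only on an upper bound on the ambient dimension (namely $g$) together with the current degree bound, so the induction closes uniformly.
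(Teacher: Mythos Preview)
Your approach matches the paper's: induct on $\dim V$, reduce to a generating subvariety, apply Theorem~\ref{uniform_geometric_Bogomolov} together with a lower bound on the Faltings height of the (traceless) generating abelian subvariety, then recurse on the components of the exceptional locus.

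There is, however, a genuine gap in the step ``so we may assume $V$ generates $A$.'' You translate by an \emph{arbitrary} $y$ with $V\subseteq y+A'$, but translation shifts N\'eron--Tate heights: by the parallelogram law $\h(x-y)\le 2\h(x)+2\h(y)$, so a bound $\h_{A,L}(x)\le\eps$ on points of $V$ does not transfer to any bound on points of $V-y$ unless $\h(y)$ is itself small. The paper handles this by first asking whether $V^\circ$ contains any point $p$ of height $\le\eps$; if not the count is zero, and if so one translates by that small point, setting $W:=V-p$, so that points of $V^\circ$ with $\h\le\eps$ map to points of $W^\circ$ with $\h\le 4\eps$. This is precisely why the paper's threshold is one quarter of the constant coming from Theorem~\ref{uniform_geometric_Bogomolov}. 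Your argument is easily repaired in the same way, but as written the reduction is not justified.

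On the Faltings height lower bound you flag as the main obstacle: the paper simply cites \cite[Lemma~4.9]{Yua24}, and in fact records the bound as $\delta=\delta(g,K)$, allowing dependence on the base through the fixed very ample line bundle on $B$; your request for a $\delta(g)$ uniform over all $K$ is stronger than what the paper actually invokes. The descent of the trace-zero hypothesis to the generating abelian subvariety, which you note in your bookkeeping remark, is exactly what the paper uses to apply this lemma to $B$ rather than to $A$.
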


It is worth mentioning that the geometric Bogomolov conjecture was only fully resolved recently, after a series of many works, as Ullmo, and Zhang's approach in the arithmetic setting relied crucially on the distribution of the torsion points in the complex topology. 
For similar reasons, it is very challenging to replicate Gao--Ge--K\"uhne's arguments for uniformity results in the geometric setting, as it builds on Ullmo and Zhang's argument.
The geometric Bogomolov conjecture is established by Cinkir~\cite{Cin11} for curves in their Jacobians, building on earlier work of Zhang~\cite{Zha93, Zha10}. 
Previous special cases were obtained by Moriwaki~\cite{Mor96, Mor97, Mor98}, Yamaki~\cite{Yam02, Yam08}, and Faber~\cite{Fab09}.
For higher-dimensional subvarieties in abelian varieties, the conjecture was proved for abelian varieties with totally degenerate reduction by Gubler~\cite{Gub07b}.
More recently, it was established in all abelian varieties over one-dimensional function fields of characteristic zero by Gao--Habegger~\cite{GH19}, over arbitrary characteristic-zero function fields by Cantat--Gao--Habegger--Xie~\cite{CGHX21}, and finally in all characteristics by Xie--Yuan~\cite{XY22}, using a reduction result of Yamaki~\cite{Yam18}. 
Theorem \ref{uniform CGHX} can then be viewed as a uniform version of the main result in \cite{CGHX21}.

\subsection{Application 2: Explicit gap principles for polynomial endomorphisms of $\bP^2$. }
Our second application lies in the purely dynamical setting and illustrates the ability of our theorem to yield quantitative Bogomolov-type bounds even in positive characteristic, where no prior dynamical uniform result appears in the literature. 

A \emph{regular polynomial endomorphism} $\Phi:\bP^2 \to \bP^2$ is a morphism whose
restriction to the affine chart $\mathbb{A}^2 \coloneqq  \bP^2 \setminus L_{\infty}$ is a polynomial map,
and such that the line at infinity $L_{\infty}$ is totally invariant under $\Phi$. 
We identify the line at infinity with $\bP^1$ and write $\Phi|_{L_{\infty}}:\bP^1\to\bP^1$. 
Regular polynomial endomorphisms have been studied extensively in dynamics; see e.g. \cite{FS94,BJ00,DS10} and the references therein.  
Recently Dujardin--Favre--Ruggiero \cite{DFR23} proved the dynamical Manin--Mumford conjecture for such regular polynomial endomorphisms and for curves that contain a point in $L_{\infty}$ that is not eventually superattracting. 
Here we adopt a different perspective and obtain explicit versions of a Bogomolov-type problem for regular polynomial endomorphisms and Fermat curves. 

\begin{theorem}\label{oberlyff}
Let $d\ge 2$, and for each $n\in\mathbb{N}$ let $C_n: x^n + y^n = z^n$ be the Fermat curve in $\mathbb{P}^2$. 
Let $K$ be a number field or a function field, and let $\Phi:\mathbb{P}^2_{\overline K}\to \mathbb{P}^2_{\overline K}$
be a regular polynomial endomorphism of degree $d\ge 2$ such that $\Phi|_{L_{\infty}}$ has degree at least $2$. 
Then the following hold.
\begin{enumerate}
    \item If $K$ is a function field, $n\ge 2^{21}d^6$, $\mathrm{char}(K)\nmid n$, and $\Phi|_{L_{\infty}}$ is not defined over the field of constants of $K$, then 
    $$\#\left\{x\in C_n(\Qbar):\hat{h}_{\Phi}(x)< \frac{1}{32(4e)^4(\sqrt{d}+1)^2}  h(\Phi|_{L_{\infty}})\right\}\le  2^{79}\, n^2 d^9 \cdot \left(\frac{h(\Phi)}{h(\Phi|_{L_{\infty}})}\right)^3.$$
    \item 
    If $K$ is a number field, $n\ge 2^{1600d^3}$, and $\Phi|_{L_{\infty}}$ is a monic polynomial that is not a power map, then
    $$\#\left\{x\in C_n(\Qbar):\hat{h}_{\Phi}(x)< \frac{1}{32 (4e)^{4}2^{500d^3}}\max\{h(\Phi|_{L_{\infty}}),1\}\right\}\le  2^{1810 d^3}\, n^2 \,
\left(\frac{\max\{1,h(\Phi)\}}{\max\{1,h(\Phi|_{L_{\infty}})\}}\right)^3.$$
\end{enumerate}
\end{theorem}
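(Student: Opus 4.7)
The plan is to apply the quantitative fundamental inequality of Theorem~\ref{quantitative Zhang} to $\Phi$ and $X = C_n$, and then convert the bound in terms of $\hat{h}_\Phi(C_n)$ into one in terms of $h(\phi)$, where $\phi := \Phi|_{L_\infty}$, by means of an explicit lower bound on $\hat{h}_\Phi(C_n)$. Setting $N = 2$, $r = \dim C_n = 1$, and $\deg C_n = n$, and using that $C_n$ is an irreducible curve so that the exceptional $Z \subsetneq C_n$ furnished by Theorem~\ref{quantitative Zhang} is a finite set of points with $\#Z \le \deg Z$, every $x \in C_n(\Kbar)$ satisfying $\hat{h}_\Phi(x) < \tfrac{1}{64e^{2}}\hat{h}_\Phi(C_n)$ lies in $Z$, and
\[
\deg Z \;\le\; 12\, n^{2}\Bigl(A(d)\,\frac{h(\Phi)}{\hat{h}_\Phi(C_n)} + 1\Bigr)^{3},
\]
where $A(d)$ is the explicit constant obtained from $c_0(2), c_1(2,d), c_2(2,d)$ in \eqref{eq:constants}: polynomial in $d$ over function fields, and of order $2^{O(d^{3})}$ over number fields (this is the source of the doubly exponential factor in case (2)).

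The heart of the argument is an explicit bound of the form $\hat{h}_\Phi(C_n) \ge c(d)\cdot h(\phi)$ that is independent of $n$. The plan is to use that $\Phi^{\ast}L_\infty = dL_\infty$ together with the standard intersection-theoretic computation of the Call--Silverman height of a curve in $\bP^{2}$ under a regular polynomial endomorphism (cf.~\cite{FS94, BJ00, DFR23, Ing22}) to produce an inequality of the form
\[
\hat{h}_\Phi(C_n) \;\ge\; \frac{1}{(\sqrt{d}+1)^{2}}\,\hat{h}_\phi\bigl(C_n\cap L_\infty\bigr),
\]
where $C_n\cap L_\infty = \{\,[x{:}y{:}0] : x^{n}+y^{n}=0\,\}$ is viewed as an effective divisor of degree $n$ on $L_\infty \iso \bP^{1}$. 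It then remains to bound $\hat{h}_\phi$ from below at enough of the $n$ points $\zeta$ with $\zeta^{n}=-1$.

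For case (1), the $n$-th roots of $-1$ lie in the constant field of $K$. Non-isotriviality of $\phi$, i.e.~the assumption that $\phi$ is not defined over the constants, together with a Baker-type height gap theorem and Benedetto's polynomial-in-$d$ bound on the number of $\phi$-preperiodic constant points, implies that every non-preperiodic constant point $P$ satisfies $\hat{h}_\phi(P) \ge c'(d)\,h(\phi)$, with $c'(d)$ and the number of preperiodic constants explicit; for $n\ge 2^{21}d^{6}$ at least $n/2$ of the roots contribute, whence $\hat{h}_\phi(C_n\cap L_\infty) \ge \tfrac{n}{2} c'(d)\, h(\phi)$. For case (2), since $\phi$ is monic of degree $\ge 2$ and not a power map, a quantitative Bogomolov-type estimate of Amoroso--Dvornicich / Baker--DeMarco type supplies explicit $\epsilon(d) = 2^{-O(d^{3})}$ and $N(d)$ such that all but $N(d)$ of the $n$-th roots of $-1$ satisfy $\hat{h}_\phi \ge \epsilon(d)\max\{h(\phi),1\}$, and the hypothesis $n\ge 2^{1600 d^{3}}$ guarantees $N(d) \le n/2$. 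Plugging the resulting lower bound for $\hat{h}_\Phi(C_n)$ into both the threshold $\tfrac{1}{64e^{2}}\hat{h}_\Phi(C_n)$ and the degree bound for $\deg Z$ in Theorem~\ref{quantitative Zhang}, and tracking constants, yields the two claims.

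The main obstacle is the explicit height gap at the $n$-th roots of $-1$ with constants effective in $d$: in case (1), an effective quantification of the failure of $\phi$ to descend to the constant field; in case (2), an explicit version of Bogomolov for roots of unity under monic non-power maps, which is what forces the doubly exponential hypothesis $n\ge 2^{1600 d^{3}}$ and the $2^{500d^{3}}$ in the threshold.
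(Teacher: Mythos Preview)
Your overall plan---apply Theorem~\ref{quantitative Zhang} to $(\Phi,C_n)$ and then bound $\hat h_\Phi(C_n)$ from below in terms of $h(\phi)$, where $\phi=\Phi|_{L_\infty}$, via the intersection $C_n\cap L_\infty$---is exactly the paper's strategy, and the reduction to averaging $\hat h_\phi$ over the $n$-th roots of $-1$ is correct (see the paper's inequality $\hat h_\Phi(C_n)\ge \frac{1}{n}\sum_{x^n=-1}\hat h_f(x)$; your extra factor $(\sqrt d+1)^{-2}$ at this step is misplaced---that factor arises later, from the bound on the Arakelov--Zhang pairing, not from the reduction to $L_\infty$).

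The substantive divergence is in how to control the number of roots of unity $\xi$ with small $\hat h_\phi(\xi)$. The paper does \emph{not} invoke outside height-gap or Bogomolov results here; instead it applies Theorem~\ref{quantitative Zhang2} a \emph{second} time, now to the split system $(f,\phi_\ell)$ on $\bP^1\times\bP^1$ and the diagonal $\Delta$, where $\phi_\ell(z)=z^\ell$. Since all roots of unity are $(f,\phi_\ell)$-small points on $\Delta$, this immediately bounds their number in terms of $\hat h_{(f,\phi_\ell)}(\Delta)$, which equals the Arakelov--Zhang pairing $(f,\phi_\ell)_{\mathrm{AZ}}$. The remaining input is a lower bound on this pairing: over function fields, Oberly's inequality gives $(f,\phi_\ell)_{\mathrm{AZ}}\ge (\sqrt\ell+1)^{-2}h_{\mathrm{Ph}}(f)$ directly (this is where $(\sqrt d+1)^2$ actually enters); over $\Qbar$, a bump-function argument \`a la Ang--Yap--Favre--Rivera-Letelier gives $(f,\phi_\ell)_{\mathrm{AZ}}\ge 2^{-O(\ell^3)}\max\{h(f),1\}$ for monic non-power $f$. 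This double use of the main theorem is the paper's key maneuver and makes all constants explicit and self-contained.

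Your alternatives are less solid. In case~(1), $\phi$ is an arbitrary rational map of degree $\ge 2$, not necessarily a polynomial, so Benedetto's bound on preperiodic points does not obviously apply, and a Baker-type gap $\hat h_\phi(P)\ge c'(d)h(\phi)$ for non-preperiodic constants with $c'(d)$ explicit and polynomial in $d$ is not, as far as I know, available in the literature for rational $\phi$. In case~(2), ``Amoroso--Dvornicich'' concerns Weil heights in abelian extensions, not dynamical heights, and ``Baker--DeMarco type'' is too vague to yield the stated $2^{-O(d^3)}$ threshold with an explicit bound on the exceptional set. Your approach might be salvageable with sharper references, but as written it has real gaps precisely at the step where effectivity is the whole point.
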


The appearance of the ratio $h(\Phi)/h(\Phi|{L_{\infty}})$ in the bound is an artifact of the argument. It is expected that the bound should hold without any dependence on this quantity. 
That said, our result already handles several examples. 
\begin{example}
Let $d\ge 2$ and let $p> 2^{21}d^6$ be a prime number. Let
    $$\Phi([X:Y:Z])
  = \bigl[\,P(X,Y)+ZR_1(X,Y,Z) : Q(X,Y) + Z R_2(X,Y,Z) : Z^{d}\,\bigr],$$ 
  for $P, Q\in \overline{\mathbb{F}_p(t)}[X,Y]\setminus \overline{\mathbb{F}_p}[X,Y]$, coprime homogeneous of degree $d\ge 2$, and $R_1,R_2\in\overline{\mathbb{F}_p}[X,Y,Z]$ homogeneous of degree $d-1$.  
Then $$\# (C_{2^{21}d^6}\cap \mathrm{Prep}(\Phi))\le 2^{121}d^{21}.$$
\end{example}

In similar spirit, we have the following example.  
\begin{example}
Let $d\ge 2$ and let consider a degree $d$ cyclotomic purtrubation of a monic polynomial of the form
    $$\Phi([X:Y:Z])
  = \bigl[\,X^d+YP(X,Y)+ZR_1(X,Y,Z) : Y^{d} + Z R_2(X,Y,Z) : Z^{d}\,\bigr],$$ 
  for $P\not\equiv0$ homogeneous polynomial of degree $d-1$ in $\Qbar[X,Y]$, and $R_1,R_2\in \Qbar[X,Y,Z]$ homogeneous of degree $d-1$ with coefficients roots of unity.
Then $$\# (C_{2^{1600d^3}}\cap \mathrm{Prep}(\Phi))\le 2^{5200d^3}.$$
\end{example}


To obtain Theorem \ref{oberlyff}, we apply Theorem \ref{quantitative Zhang} twice for different dynamical systems. 
Along the way we exploit tools from both Arakelov theory and potential theory and invoke ideas from work of Ang--Yap \cite{AY23}, Favre--Rivera--Letelier \cite{FRL06}, and Oberly \cite{Obe24}.
\subsection{Further questions}\label{sec: bigness_question}

We conclude the introduction with a question inspired by the recent results on uniform Bogomolov-type problems \cite{DP02, GGK21, Kuh21, DGH21, DKY20, DKY21, MS22, GTV23, DM24}. 
To state it,  let $V \subset \bP^N$ be a subvariety, let $(Y,g:Y\to Y)$ be a polarized endomorphism, and let 
$p : V \dashrightarrow Y$ be a rational map. 
We say that $(V,Y,g,p)$ is a \textbf{factor of} $\Phi : \bP^N \to \bP^N$ if there exists $n \in \bN$ such that $\Phi^n(V)=V$ and 
$p \circ \Phi^n = g \circ p$ on $V$.
Note that every endomorphism $\Phi$ of $\bP^N$ admits the \textbf{trivial factors} 
$(\bP^N,\bP^N,\Phi^k,\mathrm{id})$ for each $k\in\bN$.  
We say that $\Phi$ is \textbf{irreducible} if it has no non-trivial factors and note that the generic endomorphism of $\bP^N$ will be irreducible as can easily be deduced from the results in \cite{Fak14}. 
Finally, if $X$ is a subvariety of $\mathbb{P}^N$, we say that the pair $(\Phi,X)$ over a 
function field $K$ is \emph{isotrivial} if, after a change of coordinates, both $\Phi$ and $X$ 
are defined over the constant field of $K$. For endomorphisms defined over number fields this 
is an empty notion.

\begin{question}\label{bigness_question}
Let $d\ge 2$, $D\ge 1$, and let $K$ be either a number field or a function field.  
Does there exist $\epsilon=\epsilon(d,D)>0$ such that
\[
\widehat{h}_{\Phi}(X)\;\ge\;
\epsilon\bigl(c_1(N,d)\,h(\Phi)+c_2(N,d)+c_0(N)\bigr)
\]
for every irreducible endomorphism $\Phi:\mathbb{P}^N_{\overline{K}}\to\mathbb{P}^N_{\overline{K}}$ 
of degree $d$ and every subvariety $X\subset\mathbb{P}^N_{\overline{K}}$ with $\deg X\le D$ 
such that $(\Phi,X)$ is not isotrivial?
\end{question}

A positive answer to this question, combined with Theorem~\ref{quantitative Zhang}, would 
yield full uniformity in the dynamical Bogomolov conjecture for irreducible endomorphisms.

Although we expect more general forms of Question~\ref{bigness_question} to admit affirmative 
answers, we restrict ourselves to irreducible endomorphisms, as even this special case appears 
highly nontrivial.  
The reader may compare Question~\ref{bigness_question} with the results 
of Gao--Ge--Kühne \cite{GGK21} and their consequences (see Remark~\ref{HeightGapRemark} and 
Theorem~\ref{GeometricHeightGap2}), noting in particular the necessary assumption in 
\cite{GGK21} that the subvariety generates the ambient abelian variety.

Finally, we remark that the expert reader may notice relations between this question and bigness notions in Yuan--Zhang’s theory of 
metrized line bundles on quasi-projective varieties \cite{YZ24} -- this is not accidental.  
Under 
appropriate bigness assumptions, Yuan--Zhang obtain strong forms of uniformity, and several 
recent works, including \cite{MS22,DM24,GTV23,Yua24,GGK21, Kuh21}, may be viewed as steps toward 
establishing such bigness statements.  The conceptual distinction in our setting is that the 
principle sought in Question~\ref{bigness_question} is \emph{pointwise}: we do not require 
families of dynamical systems, yet we still aim to obtain uniform control over the geometry 
of small points.

\subsection{Structure of the article} 
We begin in \S\ref{sec: heights} with a review of height theory over number fields and function fields, together with two complementary constructions of heights for positive-dimensional subvarieties. In \S\ref{sec: ffofcurves} we extend the quantitative fundamental inequality of David–Philippon \cite{DP99} for the Weil height to the setting of function fields of curves. Canonical heights are introduced in \S\ref{sec: DPhtrick}, where we also establish preliminary comparison results. Theorem \ref{quantitative Zhang} is proved in \S\ref{sec: DPhtrick}, using a generic specialization argument of Gubler to pass to arbitrary function fields, together with results of Ingram \cite{Ing22}. The dynamical applications are developed in \S\ref{sec: DynamicalBogomolov}, where we prove Theorem \ref{oberlyff}. Finally, in \S\ref{sec: Bogomolov} we combine the height inequalities of Dimitrov–Gao–Habegger \cite{DGH21} and Gao–Ge–K\"uhne \cite{GGK21} with specialization techniques \cite{CS93, Ing22, GV24} to deduce Theorems \ref{uniform_geometric_Bogomolov} and \ref{uniform CGHX} from Theorem \ref{quantitative Zhang}.
\subsection{Acknowledgements}
We are indebted to Xinyi Yuan for alerting us to errors in the literature, for sharing his counterexample with Guo in \cite{GY25} and for his feedback on a previous version. We also thank Laura DeMarco and Ruoyi Guo for helpful discussions. N.M. Mavraki would like to acknowledge the support of NSERC.
\section{Heights}\label{sec: heights}
In this section we recall the basic definitions of heights and establish preliminary results. 
\subsection{Heights of points}
Let $n\in\bN$. Let $n\in \mathbb{N}$.
For $[x_0:\ldots:x_n]\in\mathbb{P}^{n}(\overline{\mathbb{Q}})$ we define the absolute logarithmic Weil height
$$h([x_0:\ldots:x_n])=h(x_0,\ldots,x_n)=\sum_{v \in M_L} \frac{[L_v:\bQ_v]}{[L:\bQ]} \log \max\{ |x_0|_v,|x_1|_v,\ldots,|x_n|_v\},$$
where $(x_0,\ldots,x_n)\in L^n$ for some finite extension $L$ of $\bQ$.

We will also work with heights in function fields. 
Let $k$ be an algebraically closed field of arbitrary characteristic and set $K=k(B)$, the function field of a normal projective variety $B$ over $k$.
Fix an ample line bundle $\mathcal{M}$ on $B$. 
Following \cite[1.4.6]{BG06} we endow $K$ with a natural set of absolute values $M_K$ with respect to $\cal{M}$. 
Each place $v\in M_K$ corresponds to a prime divisor $Z$ of $B$. 
For any prime divisor $Z$ of $B$, the local ring $O_{B,Z}$ is a discrete valuation ring and so the order of vanishing of $f \in K^{\times}$ on $Z$, denoted by $\ord_Z(f)$, is well defined. 
We can then define an absolute value 
$$|f|_Z = e^{- \ord_Z(f) \deg_{\cal{M}}(Z)}$$
where $\deg_{\cal{M}}(Z)=\cal{M}|_{Z}^{\cdot \dim Z}$.  As $Z$ varies over all prime divisors of $B$, the collection of absolute values $M_K$ satisfies the product formula, that is for all $f\in K\setminus \{0\}$, we have 
\begin{align*}
\sum_Z \ord_Z(f)\deg_{\cal{M}}(Z)=0. 
\end{align*}
We define the height of $[f_0:\cdots:f_n]\in\bP^n(K)$, with $f_i\in K$ for all $i$, by 
$$h_K([f_0:\cdots:f_n])=h_K(f_0,\ldots,f_n)=\sum_{v\in M_K}\log \max \{|f_0|_v,\ldots,|f_n|_v\}.$$

By \cite[Remark 1.4.11]{BG06}, for a finite extension $L/K$ the normalization of $B$ in $L$ yields a normal projective variety $B'$ with a finite surjective morphism $\varphi:B'\to B$ and function field $k(B')=L$ and the absolute values $|\cdot|_w$ in $M_L$ constructed as above with respect to $\varphi^{*}\cal M$ agree with the absolute values extending $M_K$ as defined in \cite[\S 1.3.6]{BG06}. 
Furthermore, if $n_w = [L_w:K_v]$ for $w|v$, then the normalized absolute values $\{| \cdot |^{n_w}_w\}_{w \in M_L}$ satisfy the product formula \cite[Prop. 1.4.2]{BG06}; see also \cite[Ex. 1.4.13]{BG06}.
Moreover, for each $P\in \bP^n(K)$ we have 
$h_L(f)=[L:K]h_K(f)$.
We may thus define $h_K:\mathbb{P}^n(\Kbar)\to\mathbb{R}_{\ge 0}$ by 
\begin{align}\label{weilheightff}
h_K(P)=\frac{h_L(P)}{[L:K]},~\text{for } P\in\bP^n(L).
\end{align}

\textbf{From now on $K_0$ denotes either $\mathbb{Q}$ or a fixed base function field $k(B)$ with an ample line bundle $\mathcal{M}$ on $B$.
Unless stated otherwise, all heights are computed with respect to the places of $M_{K_0}$ and we write $h$ for $h_{K_0}$. }

\subsection{Heights of varieties}
We next define heights of higher–dimensional subvarieties of $\mathbb{P}^n$.
Two comparable notions will be used in this article—one due to Philippon \cite{Phi95} (extended to function fields by Gubler \cite{Gub97, Gub98}), and another arising from Arakelov intersection theory and developed by Bost–Gillet–Soulé \cite{BGS94}, and by Zhang \cite{Zha95, Zha95b}.

Firstly, we define the height of a homogeneous ideal $I\subset \overline{K}_0[x_0,\ldots,x_n]$ as well as the height of subvariety $X\subseteq \mathbb{P}^n$ defined over $\overline{K}_0$, following Philippon \cite{Phi95} over number fields and Gubler \cite{Gub97, Gub98} over function fields. 
Let $A \coloneqq  \overline{K}_0[x_0,\ldots,x_n]$ and suppose that  the homogeneous ideal $I\subset A$ has rank $n+1-r$, equivalently $\dim Z(I)=n-r$. For each $i\in\{1,\ldots,r\}$, we let $U_i = u_{i,0}x_0 + \cdots + u_{i,n}x_n$ be a generic hyperplane form in indeterminates $u_{i,0},\ldots,u_{i,n}$. We then consider the elimination ideal
$$\mathfrak{C}_{\bf{1}}I = \displaystyle\bigcup_{\delta \geq 0} (I[\mbf{1}] :_{\overline{K}_0[u_{i,j}]} A_{\delta}),$$
where $I[\mathbf{1}]$ is the ideal generated by $I$ along with the linear forms $U_1,\ldots,U_{r}$ inside $A[u_{i,j}]$ and for each $\delta\ge 0$, we denote by $A_{\delta}$ the $\overline{K}_0$-module consisting of the elements of $A$ that are homogeneous of degree $\delta$. It turns out that $\mathfrak{C}_{\mbf{1}}I$ is a principal ideal inside $\overline{K}_0[u_{i,j}]$, generated by a homogeneous polynomial $f_{I[\mbf{1}]}$ and we define 
$$h(I)\coloneqq h(f_{I[\mbf{1}]}).$$ 
For a homogeneous polynomial $P\in \overline{K}_0[x_0,\ldots,x_n]$, the value $h(P)$ denotes the projective height of the coefficient vector of $P$.  
If $X$ is a subvariety of $\mathbb{P}_{\overline{K}_0}^n$ given as the zero locus of a homogeneous ideal $I_X$, then 
$f_{I_X[\mbf{1}]}$ is precisely the Chow form of $X$ (see  \cite{DS95}), most often denoted by $\mathrm{Ch}_X$, and we define 
$$h(X)\coloneqq h(I_X)=h(\mathrm{Ch}_X).$$ 
Note that in the case of $K_0=\bQ$, our definition differs than that given by Philippon in \cite{Phi95} and used in \cite[\S 2.1]{DP99}; at the archimedean places they consider the Euclidian norm instead of the max norm. 
\emph{We write $h_{\mathrm{Ph}}(X)$ to refer to the Philippon height of $X$ in the case of number fields, with the convention that for function fields $h_{\mathrm{Ph}}(X)=h(X)$.}

Later on we will need a higher degree version of Philippon's height of a homogeneous ideal $I\subset \overline{K}_0[x_0,\ldots,x_n]$ of rank $n+1-r$ (c.f. \cite[\S 4.1]{DP99}), which we introduce now, though the reader can skip this definition for the moment. Fix a tuple $\mbf{d} = (d_1,\ldots, d_{r}) \in\bb{N}^{r}$ and for each $i\in\{1,\ldots,r\}$ consider generic hypersurfaces $U_{i,d_i}$ of degree $d_i$, where $U_{i,d_i}$ has an indeterminate `coefficient' $u_{i,j}$ for each degree $d_i$ monomial $\underline{x}^{\mbf{\alpha}}$ where $|\alpha| = d_i$. Denote by $I[\mbf{d}]$ the ideal in $A[u_{i,j}]$ generated by the elements of $I$ as well as the forms $U_{i,d_i}$. We may then define the elimination ideal
$$\fra{C}_{\mbf{d}} I \coloneqq  \displaystyle\bigcup_{\delta \geq 0} (I[\mbf{d}]:_{\ovl{K}_0[u_{i,j}]} A_{\delta})$$
which is again a principal ideal of $\ovl{K}_0[u_{i,j}]$, generated by $f_{I[\mbf{d}]}$. We then define the degree $\mbf{d}$-height of $I$, by 
\begin{align}\label{degreedheight}
h_{[\mbf{d}]}(I)\coloneqq h(f_{I[\mbf{d}]}).
\end{align} 
Note that if $X$ is a subvariety of $\bb{P}^n$, given by the zero locus of an ideal $I_X$, then the principal ideal $\fra{C}_{\mbf{d}} I_X$ parametrizes the space of degree $d_i$ hypersurfaces $H_i$ for which $X \cap H_1 \cap \cdots \cap H_{r} \not = \emptyset$. 
Moreover, $\deg X=\deg f_{I_X}\coloneqq d(I_X)$.

We will often work with the normalized heights
\begin{align}\label{normalization}
\hat{h}(X)=\frac{h(X)}{\deg X},~\hat{h}_{\mathrm{Ph}}(X)=\frac{h_{\mathrm{Ph}}(X)}{\deg X},
\end{align}
and we will often slightly abuse the notation to write that $\hat{h}_{\mathrm{Ph}}(X)=\hat{h}(X)$ over a function field. 

We now describe an alternative approach to defining heights of subvarieties when $K_0$ is a function field, via intersection theory on models. This approach was developed by Gubler \cite{Gub98, Gub03}.
In the number–field setting, an analogous definition is available using adelic metrized line bundles and Arakelov intersection theory (Bost–Gillet–Soulé \cite{BGS94}, Zhang \cite{Zha95b}).

Recall that $K_0=k(B)$ is our `base' function field and that we have fixed an ample line bundle $\cal{M}$ on $B$. 
Let $X \subseteq \bb{P}^n_{K_0}$ be a subvariety that is defined over $K_0$ and let $\cal{L}$ be a line bundle on $\bP^n_B\coloneqq \mathbb{P}^n_k\times B$ that extends $O_{\bP^n_{K_0}}(1)$. We define the height of $X$ relative to $\cal{L}$ (and the choice of $\cal{M}$ on $B$) by 
$$h_{\cal{L}}(X) \coloneqq  \cal{L}^{\dim X+1} \cdot \cal{M}^{\dim B - 1} \cdot \cal{X},$$
where $\cal{X}\subset \bP^n_B$ denotes the Zariski closure of $X\subset \bP^n_{K_0}\subset \bP^n_B$. 
Similarly, we define the normalized height of $X$ by 
$$\hat{h}_{\cal{L}}(X)=\frac{h_{\cal{L}}(X)}{\deg_{O(1)}(X)}.$$
It is easy to see that we can extend this definition to subvarieties of $\bP^n_{\overline{K_0}}$ similar to \eqref{weilheightff}. 
Indeed, if a subvariety $X\subset \bP^n_{\overline{K_0}}$ is defined over a finite extension $L$ of $K_0$, and $B'$ is the normalization of $B$ in $L$ with a finite surjective morphism $\varphi: B' \to B$ and $L = k(B')$, then we define the height of $X$ with respect to $\cal{L}$ by 
\begin{align}
    h_{\mathcal{L}}(X)=\frac{1}{[L:K_0]}\cal{L}'^{\dim Y+1} \cdot \phi^{*}(\cal{M})^{\dim B - 1} \cdot \cal{Y}',
\end{align}
where $\cal{X}'\subset\bP^n_{B'}$ is the Zariski closure of $X\subset \bP^n_L\subset \bP^n_k\times B'$ and $\cal{L}'$ is the pullback of $\cal{L}$ via the natural map $\bP^n_{B'}\to\bP^n_B$ induced by $\phi$; see \cite[Theorem 3.5]{Gub08}.

If $\cal{L}$ is the relative $O(1)$ in $\bb{P}^n_B$, denoted by $\cal{O}(1)$, and $X$ is a point, then this definition agrees with the one in \eqref{weilheightff}; see \cite[Example 2.4.11]{BG06}. More generally for a subvariety $X\subset{P}^n_{\overline{K_0}}$ of arbitrary dimension we have 
\begin{align}\label{geometric_height_comparison}
 h_{\mathrm{Ph}}(X)=h_{\mathcal{O}(1)}(X), ~ h_{[(d_1,\ldots, d_{n-\dim X})]}(X)=d_1\cdots d_{n-\dim X}h_{\mathcal{O}(1)}(X); 
\end{align}
see \cite[Example 1.2 and \S 3]{Gub97} and \cite[\S 3]{Gub08}.

More generally, building on Arakelov theory and the fundamental work of Bost--Gille--Soul\'e, Zhang \cite[Theorem (1.4)]{Zha95b}, Gubler \cite{Gub98,Gub07b, Gub08} and Chambert-Loir-Thuillier \cite{CL06, CLT09} developed an intersection theory for integrable adelic metrized line bundles for number fields and function fields respectively. 
	So denoting the set of $n\ge 0$ dimensional cycles of an irreducible projective variety $Y$ over $K_0$ by $Z_n(Y)$, their respective intersection numbers are written as
	\begin{align}\label{intersection}
		\begin{split}
			\widehat{\mathrm{Pic}}(Y)_{\mathrm{int}}^{n+1}\times Z_n(Y)&\to \mathbb{R}\\
			(\overline{L}_1,\cdots,\overline{L}_n,D)&\mapsto \overline{L}_1\cdots\overline{L}_n\cdot D=\overline{L}_1|_D\cdots\overline{L}_n|_D,
		\end{split}
	\end{align}
	where $\widehat{\mathrm{Pic}}(Y)$ denotes the space of integrable adelic metrized line bundles on $Y$ as in \cite{YZ17, YZ24}.  
    If $X$ is an irreducible closed subvariety of $Y$ defined over $\overline{K_0}$, we can define the height of a subvariety associated to any $\overline{L}\in \widehat{\mathrm{Pic}}(Y)_{\mathrm{int}}$, as follows. Denote by $X_{\mathrm{gal}}$ the closed $K_0$-subvariety of $Y$ corresponding to $X$ and write 
	
	\begin{align}\label{height def}
		\hat{h}_{\overline{L}}(X)=\frac{\overline{L}^{\dim Z+1}\cdot X_{\mathrm{gal}}}{\deg_{L}(X_{\mathrm{gal}})}\in \bR.
	\end{align}
   
To recast our heights in this language, we let $\overline{O(1)}$ denote the line bundle $O(1)$ in $\bP_{K_0}^n$ equipped with the model metric induced by the ample line bundle
$\mathcal{M}$ on $B$ as in \cite[\S 2.3]{BG06} or \cite[§3]{Gub08}, in the case of function fields and with the standard adelic metric as in \cite[Ex.~3.2]{Yua08}
in the case of number fields. 
In both cases we refer to $\overline{O(1)}$ as the \textbf{standard metric on $O(1)$.}
For a subvariety $X$ of $\bP^n_{\overline{K_0}}$ we write 
\[
\hat{h}_{\mathrm{st}}(X)
   \coloneqq  \hat{h}_{\overline{O(1)}}(X).
\]
This construction extends the classical Weil height of points and agrees,
in the function--field setting, with the intersection--theoretic height introduced earlier
(see \cite[Thm.~3.5]{Gub08}), so that for function fields 
$\hat{h}_{\mathrm{st}}\equiv\hat{h}\equiv\hat{h}_{\mathrm{Ph}}.$
In the case of number fields, equality may fail; however, the heights remain comparable. 
Indeed by \cite[Proposition 2.1 (v)]{DP99}, and our afformentioned observation, we have that for any  subvariety $X\subset \bP^N$ over either function field or number field, 
\begin{align}\label{nf heights compare}
|\hat{h}_{\Ph}(X)-\hat{h}_{\mathrm{st}}(X)|\le c_0(N)(\dim X+1),
\end{align}
where
$c_0(N)\coloneqq \frac{7}{2}h(N+1),$
which is $0$ for function fields.


\section{Revisiting David--Philippon's inequality for the Weil height: the case of function fields}\label{sec: ffofcurves}
Our aim in this section is to prove the following control on the number of small points with respect to the Weil height. It will be crucial in the proof of Theorem \ref{quantitative Zhang}, as we shall see in \S\ref{sec: DPhtrick}.

\begin{theorem}\label{David--Philippon--mult}
Let $K_0$ be either $\bQ$ or the function field of a curve and let $n\in\bN$. 
Let $X\subset \mathbb{P}^n_{\overline{K_0}}$ be an irreducible subvariety that is $D$-nice (see Definition \ref{Dnice}). Let $\delta \geq 2D+\dim X+2$ and let $\epsilon>0$. Then there exists a homogeneous form $q \in  \overline{K_0}[x_0,\ldots,x_n]$ of degree $\delta$ that doesn't annihilate $X$ and such that 
if $$\hat{h}_{\mathrm{Ph}}(X)\ge (\dim X+1)(4e)^{\dim X+1}\left(\frac{2\epsilon}{\delta}+6h(n+1)\right),$$ then 
\begin{align}
\left\{ x \in X(\overline{K_0}):   h_{\mathrm{Ph}}(x) \leq \frac{1}{(4e)^{\dim X+1}}\hat{h}_{\mathrm{Ph}}(X) \right\}\subset Z(q).
\end{align}
\end{theorem}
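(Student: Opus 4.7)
The plan is to adapt David--Philippon's argument from the toric setting \cite{DP99} to general subvarieties of $\mathbb{P}^n$ and to function fields of curves, proceeding by contraposition. Set $r = \dim X$ and $V_{\delta} \coloneqq \overline{K_0}[x_0,\ldots,x_n]_{\delta}$, and write $S \subseteq X(\overline{K_0})$ for the set of small points appearing in the conclusion. The nonexistence of a form $q$ with the required properties is equivalent to $I(S) \cap V_{\delta} = I(X) \cap V_{\delta}$: every degree-$\delta$ form vanishing on the small points vanishes identically on $X$. My aim will be to derive from this an upper bound on $\hat{h}_{\Ph}(X)$ contradicting the assumed lower bound. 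The $D$-niceness of $X$ together with the assumption $\delta \geq 2D + r + 2$ will ensure that the Hilbert function of $X$ in degree $\delta$ matches the Hilbert polynomial, giving effective control on $\dim I(X)_\delta$ and allowing subsequent generic-position arguments.

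First I would choose a finite subset $S_0 \subseteq S$ large enough that the kernel of the evaluation map $V_{\delta} \to \overline{K_0}^{S_0}$, $q \mapsto (q(x))_{x \in S_0}$, exceeds $\dim I(X)_\delta$ by at least $r+1$. Applying the absolute Siegel theorem of Roy--Thunder \cite{rth96} to this kernel will produce $r+1$ linearly independent forms $Q_0,\ldots,Q_r \in V_\delta$ of controlled height
\[
\sum_{i=0}^{r} h_{\Ph}(Q_i) \;\leq\; (r+1)\delta \max_{x \in S_0} h_{\Ph}(x) + (r+1)\, h(n+1)\log(\dim V_\delta),
\]
where the $h(n+1)$-term is the arithmetic correction, vanishing in the function-field case. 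Because these forms span a subspace of $V_\delta$ of dimension strictly larger than $\dim I(X)_\delta$, the standing hypothesis $I(S_0)_\delta \supseteq I(S)_\delta = I(X)_\delta$ forces at least one $Q_i$ --- say $Q_0$ --- not to vanish identically on $X$.

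Next I would invoke the generalized Chow-form/$\mathbf{d}$-height machinery with multidegree $\mathbf{d} = (\delta,\ldots,\delta)$ ($r$ copies). Specializing the generic hypersurfaces $U_1,\ldots,U_r$ in $f_{I_X[\mathbf{d}]}$ successively at $Q_0, Q_1, \ldots, Q_{r-1}$ is controlled by the Philippon--Gubler specialization inequality \cite{Phi95,Gub97,Gub98}, which at each step bounds the height of the specialized form by that of the original plus a multiple of $\delta \cdot h_{\Ph}(Q_i)\cdot \deg X$. Using $Q_0 \notin I(X)_\delta$ to ensure the first specialization is not identically zero, iteration should yield
\[
\delta^r \cdot \deg X \cdot \hat{h}_{\Ph}(X) \;=\; h_{[\mathbf{d}]}(X) \;\leq\; (r+1)\, \deg X \cdot \delta^r \max_{i} h_{\Ph}(Q_i) + C(n,\delta,\deg X).
\]
Combining with the Siegel estimate above and inserting the defining bound $h_{\Ph}(x) \leq (4e)^{-(r+1)}\hat{h}_{\Ph}(X)$ for $x \in S_0$ should rearrange to $\hat{h}_{\Ph}(X) \leq (r+1)(4e)^{r+1}\bigl(2\epsilon/\delta + 6\, h(n+1)\bigr)$, contradicting the hypothesis.

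The principal obstacle will be the explicit bookkeeping of constants across the $r+1$ successive specializations in order to retain the sharp factor $(4e)^{r+1}$; this matches the analogous constant in David--Philippon's toric argument and emerges from an induction on codimension that tightly balances the Siegel-theoretic and Bezout-type contributions. A secondary difficulty will be ensuring that the Siegel-extracted forms $Q_i$ occupy sufficiently generic position relative to $X$ so that each specialization decreases dimension by exactly one without sacrificing the height control; this is precisely what the $D$-niceness hypothesis, combined with the lower bound on $\delta$, should secure.
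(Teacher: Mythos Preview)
Your route differs from the paper's, and the Chow-form specialization step has a genuine gap.

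The paper (following David--Philippon \cite{DP99} exactly) does not argue by contradiction and never specializes the Chow form at auxiliary hypersurfaces. It constructs $q$ directly. Writing $r=\dim X+1$, the $D$-niceness of $I_X$ and an elimination-theoretic argument give the lower bound $H_a(I_X;\delta)=h_S\bigl((I_X)_\delta\bigr)\ge h(I_X)\bigl((\delta-D-1)/r\bigr)^r$ for the arithmetic Hilbert function (Proposition~\ref{ArithHilbertBound}). By duality $h_S\bigl((I_X)_\delta\bigr)=h_S\bigl((I_X)_\delta^\perp\bigr)$, and one applies Roy--Thunder's Siegel lemma to $(I_X)_\delta^\perp$ --- \emph{not} to a space of forms vanishing on chosen small points --- producing a single $q\in A_\delta\setminus (I_X)_\delta$ with $h_{I_X}(q)\le -\tfrac{h(I_X)}{r e^r d(I_X)}\cdot\tfrac{(\delta-D-1)^r}{(\delta+r-1)^{r-1}}+\epsilon$. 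For any $x\in X$ with $q(x)\neq 0$ one has $-\delta\, h(x)\le h_{I_X}(q)$, so every small point lies in $Z(q)$. Chardin's geometric Hilbert bound then yields the factor $(4e)^{\dim X+1}$ with no induction on codimension.

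In your approach the specialization inequality points the wrong way. The Philippon--Gubler estimates bound the height of the \emph{specialized} form from above by $h_{[\mathbf d]}(X)$ plus contributions from the $h(Q_i)$; they do not bound $h_{[\mathbf d]}(X)$ from above. Moreover, all your $Q_i$ vanish on $S_0\subseteq X$, so $X\cap Z(Q_0)\cap\cdots$ is nonempty and the full specialization of $f_{I_X[\mathbf d]}$ is identically zero, carrying no height information. The $D$-niceness hypothesis governs the regularity of $I_X$ (this is precisely how it enters Proposition~\ref{ArithHilbertBound}); it does not force externally constructed $Q_i$'s into general position relative to $X$. Two smaller issues compound this: with your convention $r=\dim X$ the Chow form has $r+1$ slots, not $r$; and there is no mechanism guaranteeing that the first few Siegel-small vectors of $I(S_0)_\delta$ avoid the large subspace $(I_X)_\delta$, so even the claim that some $Q_i\notin (I_X)_\delta$ needs justification.
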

In fact we will prove a stronger result stated in \S\ref{construction of small section} as Proposition \ref{small section}.  If $K_0=\bQ$, then this is treated by David--Philippon \cite[Corollary 4.12 ]{DP99}, so it remains to handle the case of function fields. Our strategy follows the one in \cite{DP99} to which we shall often refer.

Henceforth in this section \textbf{$K_0$ is a function field of a curve}.

\subsection{The absolute Siegel theorem and a small linear form}\label{geometry of numbers}
The first key ingredient is the absolute Siegel theorem in \cite{rth96} which serves as a replacement of \cite[Proposition 4.8]{DP99}. 
To state it we recall the definition of the `Schmidt' height of a vector space. Let $n\in\bN$ and let $V$ be a subspace of $(\overline{K}_0)^{n+1}$. If $\dim V = 1$ and is generated by $\vec{x}=(x_0,\ldots,x_n)\in (\overline{K}_0)^{n+1}$, then we define its Schmidt height to be $$h_S(V)\coloneqq h(x_0,\ldots,x_n).$$ By the product formula this is independent of the choice of spanning vector $\vec{x}$. More generally, suppose that $\dim V=r\in\{1,\ldots,n+1\}$. Then its $r$-th wedge power $\bigwedge^r V$ defines an $1$-dimensional subspace of $\bigwedge^r (\overline{K}_0)^{n+1}$. Letting $e_1,\ldots,e_{n+1}$ be the standard basis of $(\overline{K}_0)^{n+1}$, we identify  $\bigwedge^r (\overline{K}_0)^{n+1}$ with $\overline{K}_0^{\binom{n+1}{r}}$ by identifying the basis vectors $e_1\wedge \ldots\wedge e_{i_r}$ with $i_1 < \cdots < i_r$ with the canonical basis of $\overline{K}_0^{\binom{n+1}{r}}$ considering the lexicographic order. The Schmidt height of $V$, denoted by $h_S(V)$, is then defined to be the height of a generator of $\bigwedge^r V$ under this identification. In particular, it is the height of a vector whose coordinates are the Pl\"ucker coordinates of $v_1\wedge \cdots\wedge v_r$ for a basis $\{v_1,\ldots,v_r\}$ of $V$. 
We can now recall Roy--Thunder's absolute Siegel lemma. 
\begin{theorem}\label{DPh:SubspaceSmall}{\cite[Corollary 8.2]{rth96}}
Let $K_0$ be the function field of a curve. Let $n,m\in\bN$ and let $V \subseteq \overline{K}_0^{n+1}$ be a subspace of dimension $m$. Let $\epsilon>0$. Then,  there exists subspaces 
$$W_1\subset W_2\subset\cdots\subset W_m=V,$$  such that for each $i\in\{1,\ldots,m\}$, we have  $\dim W_i=i$ and 
$$h_S(W_i) \leq \frac{i}{m} h_S(V) +\epsilon.$$
\end{theorem}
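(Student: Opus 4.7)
The plan is to reduce the flag statement to a single-vector absolute Siegel inequality and then iterate it along a quotient construction. The building block I would establish first is: for any subspace $U \subseteq \overline{K_0}^{n+1}$ of dimension $d \ge 1$ and any $\epsilon' > 0$, there is a nonzero $u \in U$ with
\[
h(u) \;\le\; \tfrac{1}{d}\, h_S(U) + \epsilon'.
\]
Granting this, I construct the flag inductively. Set $W_1 = \langle v_1 \rangle$ for a small vector $v_1 \in V$. At step $i$, consider the quotient $V/W_{i-1}$, whose Schmidt height equals $h_S(V) - h_S(W_{i-1})$ by the exactness properties of the Pl\"ucker construction; apply the building block with slack $\epsilon/m$ to produce a small representative $\bar v_i$, lift it to some $\tilde v_i \in V$, and set $W_i = W_{i-1} + \langle \tilde v_i \rangle$. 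Combined with the additivity $h_S(W_i) \le h_S(W_{i-1}) + h_{V/W_{i-1}}(\bar v_i)$ (obtained by unpacking Pl\"ucker coordinates place-by-place), a telescoping argument yields $h_S(W_i) \le \tfrac{i}{m} h_S(V) + \epsilon$ for every $i$.

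For the single-vector statement over a function field of a curve, the natural route is via Riemann--Roch on the underlying curve. Descending $U$ to a finite extension $L/K_0$ on which it is defined, with normalization $B_L$ of $B$ in $L$, the subspace $U \subseteq L^{n+1}$ corresponds by saturation to a sub-vector-bundle $\mathcal{U} \subseteq \mathcal{O}_{B_L}^{n+1}$ of rank $d$ whose determinant encodes $h_S(U)$. Finding a nonzero vector of small height becomes equivalent to producing a global section of $\mathcal{U}(D)$ for an auxiliary divisor $D$ of controlled degree, which Riemann--Roch guarantees as soon as the slope of $\mathcal{U}(D)$ exceeds $g_L - 1$. Translating back yields a vector with $h(u) \le \tfrac{1}{d} h_S(U) + (g_L - 1)/[L:K_0]$, up to contributions controlled by the Harder--Narasimhan filtration of $\mathcal{U}$.

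The main obstacle is the \emph{absolute} qualifier: the correction $(g_L - 1)/[L:K_0]$ depends on the field of definition and is not small on any fixed $L$. To eliminate this dependence I would follow the strategy of Roy--Thunder, which exploits the freedom of further extensions $L'/L$. A careful adelic geometry-of-numbers argument, based on a Minkowski-type theorem for adelic vector spaces, shows that as $L$ ranges through a well-chosen tower the averaged correction can be driven below any prescribed $\epsilon'$; this trade-off converts the relative Siegel inequality into the absolute one over $\overline{K_0}$. This is the technical crux of \cite[\S 7--8]{rth96}, whose argument treats number fields and function fields uniformly via an adelic Minkowski-type construction, and I would specialize their analysis to the function-field-of-a-curve setting to obtain the flag bound in Theorem~\ref{DPh:SubspaceSmall}.
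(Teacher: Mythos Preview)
The paper does not give a proof of this statement at all: it is stated with the citation \cite[Corollary 8.2]{rth96} and used as a black box. There is no \texttt{proof} environment following it; the paper immediately proceeds to derive Corollary~\ref{DPh:SmallLinear} from it. So your proposal is not to be compared against a proof in the paper --- the paper's ``proof'' is simply the reference to Roy--Thunder.

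As for your sketch itself: the overall shape (reduce to a single-vector absolute Siegel lemma, then build a flag) is reasonable, and you correctly identify that the absolute qualifier is the whole difficulty and that it is handled in \cite[\S7--8]{rth96} by an adelic Minkowski-type argument over varying extensions. Two remarks. First, your inductive step via quotients $V/W_{i-1}$ needs care: the quotient is not naturally a subspace of $\overline{K_0}^{n+1}$, so the Schmidt height $h_S$ as defined via Pl\"ucker coordinates in a fixed ambient space does not directly apply; one has to use the exact-sequence formula $h_S(W_i)=h_S(W_{i-1})+h_S(W_i/W_{i-1})$ intrinsically, or pass to orthogonal complements via duality. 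Second, Roy--Thunder do not actually iterate through quotients; their Theorem~8.1 produces the full flag and all successive minima simultaneously from a single adelic Minkowski-type construction, and Corollary~8.2 is then a direct consequence. Your route could likely be made to work, but it is not how the cited reference proceeds, and in any case the paper under review does not attempt any of this.
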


As a consequence we are able to find a `small' linear form with respect to the Schmidt height of a given subspace $V \subseteq\overline{K}_0^{n+1}$. Here we measure the size of a linear form $q$ defined over a finite extension $L$ of $K_0$ that doesn't vanish identically on $V$, by 
$$h_{\widetilde{V}}(q) = \sum_{w \in M_L} \frac{[L_w:(K_0)_v]}{[L:K_0]} \sup_{x \in V(\Lbar)} \log \frac{|q(x)|_w}{|x|_w},$$
where we have fixed a extension of $|\cdot|_w$ to $\Lbar$ and $v$ denotes the place of $K_0$ below $w\in M_L$
. This quantity is independent of the choice of $L$. The following result will be crucial for our proof.

\begin{corollary} \label{DPh:SmallLinear}
Let $K_0$ be the function field of a curve.
Let $V \subseteq \overline{K}_0^{n+1}$ be a subspace of dimension $m$ and let $\epsilon>0$. Then there exists a linear form $q$ defined over $\overline{K}_0$ that does not vanish identically on $V$ and such that 
$$h_{\widetilde{V}}(q) \leq -\frac{h_S(V)}{m}+\epsilon.$$
\end{corollary}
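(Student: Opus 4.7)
The plan is to combine Theorem~\ref{DPh:SubspaceSmall} (the Roy--Thunder absolute Siegel lemma for function fields of curves) with a duality argument: a codimension-one subspace $W$ of $V$ determines, up to scalar, a linear form $q$ cutting it out, and its ``operator-norm'' height $h_{\widetilde{V}}(q)$ is controlled by $h_S(W)-h_S(V)$.

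Concretely, first apply Theorem~\ref{DPh:SubspaceSmall} to $V$ with the given $\epsilon$. This produces in particular an $(m-1)$-dimensional subspace $W \coloneqq W_{m-1}\subsetneq V$ satisfying
\[
h_S(W) \;\le\; \tfrac{m-1}{m}\, h_S(V) + \epsilon.
\]
Since $W$ is a proper subspace of $V$, there exists a linear form $q\in(\overline{K}_0^{n+1})^{\vee}$ that vanishes on $W$ but not on $V$; any such $q$ descends to a nonzero element of $(V/W)^{\vee}$, and its scalar is irrelevant because $h_{\widetilde{V}}$ is scale-invariant by the product formula. The crux of the argument is then to upgrade this to the sharp identity
\[
h_{\widetilde{V}}(q) \;=\; h_S(W) - h_S(V),
\]
so that combining with the Siegel bound immediately yields $h_{\widetilde{V}}(q)\le -h_S(V)/m + \epsilon$.

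To establish the identity, fix a lift $v\in V\setminus W$, normalize $q$ so that $q(v)=1$, and observe that for $x = u + tv\in V(\Lbar)$ with $u\in W(\Lbar)$ and $t\in\Lbar$ one has $q(x)=t$. Consequently, at every place $w$ of a finite extension $L/K_0$ over which $V$, $W$ and $v$ are defined, the local contribution to $h_{\widetilde{V}}(q)$ becomes $-\log d_w(v,W)$, where $d_w(v,W)\coloneqq \inf_{u\in W(\Lbar)} |v+u|_w$. Summing these local contributions with the appropriate multiplicities and matching against the Pl\"ucker-coordinate expansion of $h_S(V)$ computed in a basis of $W$ augmented by $v$ yields the desired equality; the ultrametric nature of function-field absolute values makes the computation sharp at each place, so no archimedean correction appears. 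The main obstacle will be making this local sup computation airtight place-by-place, respecting the normalisations fixed in \S\ref{sec: heights}: this is a bookkeeping exercise closely analogous to the number-field argument of \cite[\S 4]{DP99}, but the absence of archimedean places both simplifies the optimisation (the supremum is realised on a single vector at each place) and removes the archimedean overhead which in \cite{DP99} forces the conclusion to be only an inequality.
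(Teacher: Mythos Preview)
Your approach is the paper's: apply Theorem~\ref{DPh:SubspaceSmall} to get a codimension-one $W\subset V$ with $h_S(W)\le\tfrac{m-1}{m}h_S(V)+\epsilon$, take any $q$ cutting out $W$ in $V$, and control $h_{\widetilde V}(q)$ by $h_S(W)-h_S(V)$. The paper establishes only the inequality $h_{\widetilde V}(q)\le h_S(W)-h_S(V)$ via the Laplace bound $\|v_1\wedge\cdots\wedge v_{m-1}\wedge x\|_w\le\|v_1\wedge\cdots\wedge v_{m-1}\|_w\cdot\|x\|_w$ for any $x$ with $q(x)=1$, which already suffices; your sharper equality claim is in fact correct in the purely non-archimedean setting, but it requires a short additional argument (choose $u\in W$ via Cramer's rule so that $v+u$ vanishes on the coordinates supporting the dominant Pl\"ucker minor of $v_1\wedge\cdots\wedge v_{m-1}$) that you leave as ``bookkeeping.''
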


\begin{proof}
By Proposition \ref{DPh:SubspaceSmall}, we may find a subspace $V$ of dimension $m-1$ such that $$h_S(W) \leq \frac{m-1}{m} h_S(V)+\epsilon.$$ Choose a linear form $q$ such that $W= \{q = 0 \} \cap V$ and suppose that $q$ is defined over a finite extension $L$ of $K_0$. We claim that this $q$ is our desired linear form.

To see this, we first choose a basis $v_1,\ldots,v_{m-1}$ for $W$ and let $x \in V$ be an element such that $q(x) = 1$. Then $x$ is uniquely defined up to addition by an element in $W$, so that $$y\coloneqq v_1 \wedge \cdots \wedge  v_{m-1}\wedge x\in \bigwedge^m V$$ is  independent of the choice of $x\in V(\overline{L})$ with $q(x)=1$. Let $$z\coloneqq v_1 \wedge \cdots \wedge v_{d-1} \in \bigwedge^{m-1} W.$$
Writing $x$ and $v_i$'s in the standard basis, we know that for each $w\in M_L$, the sup norm $\|y\|_w$ is the maximum of the $v$-adic absolute values of them determinants of the $m \times m$ minors of the matrix representing $\{x,v_1,\ldots,v_{m-1}\}$. 
Similarly, $\|z\|_w$ is the maximum $|\cdot |_v$ of the determinants of the $(m-1) \times (m-1)$ minors for the matrix representing $\{v_1,\ldots,v_{d-1}\}$. Using the Laplace expansion (see also \cite[Lemma 3.4]{rth96}), we obtain that 
$$\log \|y\|_w \leq \log \|z\|_w + \log \|x\|_w.$$
Therefore, for any $x\in V$ with $q(x) = 1$, we have 
$$\log\frac{|q(x)|_w}{\|x\|_w} \leq - \log \|y\|_w + \log \|z\|_w$$
Since we may scale $x$ such that $q(x) = 1$ is satisfied, we infer that
$$\sup_{x \in V} \log\frac{|q(x)|_w}{\|x\|_w} \leq - \log \|y\|_w+ \log \|z\|_w.$$
Summing up over all $w$, we get
$$h_{\widetilde{V}}(q) \leq h_S(W) - h_S(V) \leq -\frac{h_S(V)}{d}+\epsilon,$$
as desired.
\end{proof}
\subsection{Estimates for the Hilbert functions}\label{elimination theory}
Recall that $I\subset \overline{K_0}[x_0,\ldots,x_n]=A$ is a homogeneous ideal of rank $n+1-r$. 
The geometric Hilbert function is given by
$$H_g(I,\delta) = \dim_{\overline{K_0}} (A/I)_{\delta}.$$ 
We will need the following result of Chardin. 
\begin{proposition}{\cite[TH\'EOR\`EME]{Cha89}}\label{Chardin}
Let $I\subset \overline{K_0}[x_0,\ldots,x_n]$ be a homogeneous ideal of rank $n+1-r$ and degree $d(I)$. Then 
\begin{align*}
H_g(I,\delta)\le  d(I) \binom{\delta + r-1}{r-1},
\end{align*}
for all $\delta\in\bN$.
\end{proposition}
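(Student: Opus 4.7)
The plan is to follow Chardin's classical strategy: induct on $r$, slicing by a generic hyperplane at each step, and summing with the hockey-stick identity
\[
\sum_{j=0}^{\delta}\binom{j+k}{k}=\binom{\delta+k+1}{k+1}.
\]
Throughout I interpret rank $n+1-r$ as the Krull height of $I$, so that the Hilbert polynomial of $A/I$ has degree $r-1$ in $\delta$, matching the shape of the claimed bound.

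\textbf{Base case $r=1$.} Here $Z(I)$ is zero-dimensional of degree $d(I)$. Passing if necessary to the top-dimensional unmixed component of $I$ — which has the same Chow degree — one checks that the graded piece $(A/I)_\delta$ injects via evaluation into the length-$d(I)$ algebra $\prod_{p\in Z(I)}\mathcal{O}_{Z(I),p}$, whence $H_g(I,\delta)\le d(I)=d(I)\binom{\delta+0}{0}$.

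\textbf{Inductive step.} Choose a generic linear form $L\in A_1$; since Hilbert functions are invariant under extension of the base field, I may allow $L$ to have indeterminate coefficients. Genericity guarantees that $L$ vanishes on no top-dimensional component of $Z(I)$, so $I'\coloneqq I+(L)$ has rank $n+2-r$ (one more than $I$), and B\'ezout yields $d(I')\le d(I)$. From the graded surjection $(A/I)_\delta\twoheadrightarrow (A/I')_\delta$, whose kernel contains $L\cdot (A/I)_{\delta-1}$, one obtains, for every $\delta\ge 0$ with the convention $H_g(I,-1)\coloneqq 0$,
\[
H_g(I,\delta)\ \le\ H_g(I,\delta-1)+H_g(I',\delta).
\]
Telescoping and applying the inductive hypothesis to $I'$,
\[
H_g(I,\delta)\ \le\ \sum_{j=0}^{\delta}H_g(I',j)\ \le\ d(I)\sum_{j=0}^{\delta}\binom{j+r-2}{r-2}\ =\ d(I)\binom{\delta+r-1}{r-1},
\]
which closes the induction.

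\textbf{Main obstacle.} The sole technical subtlety is the generic choice of $L$: one needs both $d(I')\le d(I)$ and a strict drop in rank, and each can fail if $I$ has embedded primes or several components of different dimensions. The fix is to replace $I$ at the outset by its top-dimensional unmixed part (without affecting the Chow degree $d(I)$) and to check that for $L$ outside a proper Zariski closed subset of $\check{\mathbb{P}}^n$ the form $L$ is a non-zerodivisor on this quotient. Establishing this genericity — the heart of Chardin's original argument — is the main non-trivial step, after which the induction above proceeds cleanly.
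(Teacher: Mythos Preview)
The paper does not supply a proof; it cites Chardin~\cite{Cha89}, whose th\'eor\`eme is in fact stated for homogeneous \emph{prime} ideals, and Proposition~\ref{SmallSection}---the only place this bound is invoked---applies it precisely in that case. Your hyperplane-section induction with the hockey-stick identity is exactly Chardin's method, and the recursive inequality $H_g(I,\delta)\le H_g(I,\delta-1)+H_g(I',\delta)$ is valid for any linear form $L$.

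However, your proposed fix---replacing $I$ by its top-dimensional unmixed part $I^{\mathrm{top}}$---points the wrong way. Since $I\subseteq I^{\mathrm{top}}$, one has $A/I\twoheadrightarrow A/I^{\mathrm{top}}$ and hence $H_g(I,\delta)\ge H_g(I^{\mathrm{top}},\delta)$, so a bound for $I^{\mathrm{top}}$ yields nothing for $I$. Indeed the stated inequality already fails for non-saturated ideals: take $I=(x,y)\cap\mathfrak m^2=(x^2,xy,xz,y^2,yz)\subset k[x,y,z]$, where $r=1$, $d(I)=1$, yet $H_g(I,1)=3$. So some hypothesis beyond ``homogeneous'' is genuinely required, and the unmixed reduction is not the mechanism that supplies it. The real difficulty is that the successive slices $I+(L_1,\ldots,L_i)$ need not remain saturated, so $\mathfrak m$ may appear as an associated prime and no further $L$ is a non-zerodivisor. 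Chardin handles this using that $A/I$ is a domain; your sketch has the right skeleton, but you should drop the unmixed-part claim and instead point to primality (which is all the paper needs) as the hypothesis driving the induction.
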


Next, we recall that the arithmetic Hilbert function 
$$H_a(I,\delta) = h_S(I_{\delta}),$$ where we view $I_{\delta}$ as a vector space in $\overline{K_0}^{\binom{\delta +n}{n}}$, where each coordinate represents the coefficient of a monomial of degree $\delta$in our variables $x_0,\ldots,x_n$. The next lemma is the crucial result needed to construct our small section.
It provides a lower bound for the arithmetic Hilbert function in terms of the height of the ideal and $\delta$ under the assumption that the ideal is $D$-nice. It is precisely the counterpart of \cite[Proposition 4.2]{DP99} in the setting of function fields.

Following David--Philippon \cite[\S 4.1]{DP99}, we recall the definition of a $D$-nice ideal. 
\begin{definition}\label{Dnice}
A homogeneous ideal $I \subseteq \overline{K_0}[x_0,\ldots,x_n$] is $D$-nice for $D\in\bN$ if for all $\mbf{d}=(d_1,\ldots,d_r) \in \bb{N}^{r}$, we have 
$(I[\mbf{d}] :_{\overline{K_0}[u_{i,j}]} A_{D+ d_1 + \cdots + d_r - r + 1}) \not = (0).$
We say that a subvariety $X=Z(I_X)\subset \mathbb{P}^n_{\overline{K_0}}$ is $D$-nice if its defining ideal $I_X$ is $D$-nice.
\end{definition}

\begin{proposition} \label{ArithHilbertBound}
Let $K_0$ be the function field of a curve. 
Let $I$ be a homogeneous ideal with rank $n+1-r \leq n$ which is $D$-nice. Then for all $\delta \geq D+1$, we have 
$$H_a(I;\delta) \geq h(I) \cdot \left(\frac{\delta - D - 1}{r} \right)^r.$$
\end{proposition}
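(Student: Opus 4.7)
The plan is to transport the geometry-of-numbers argument of David--Philippon \cite[Proposition~4.2]{DP99}, established over $\overline{\mathbb{Q}}$, to the function-field setting. The three main ingredients are the multi-degree Chow forms $f_{I[\mathbf{d}]}$ introduced in \S\ref{sec: heights}, the exact scaling identity $h_{[\mathbf{d}]}(I)=d_1\cdots d_r\,h(I)$ in the function-field setting coming from \eqref{geometric_height_comparison}, and the ultrametric nature of every place of $K_0$, which cleanly removes the archimedean Mahler-measure contributions that must be tracked in \cite{DP99}.

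Given $\delta\ge D+1$, I would set $d=\lfloor(\delta-D+r-1)/r\rfloor\ge 1$ and $\widetilde{\delta}\coloneqq D+rd-r+1\le \delta$, and take the constant tuple $\mathbf{d}=(d,\ldots,d)\in\bN^r$. Since $H_a(I;\cdot)$ is monotone in its second argument, the required bound reduces to $H_a(I;\widetilde{\delta})\ge d^r h(I)$; together with $d>(\delta-D-1)/r$ this yields
\[
H_a(I;\delta)\ge H_a(I;\widetilde{\delta})\ge d^r h(I)\ge\left(\frac{\delta-D-1}{r}\right)^r h(I).
\]
By the scaling identity, $h_{[\mathbf{d}]}(I)=d^r\,h(I)$, so everything reduces to the clean comparison $h_{[\mathbf{d}]}(I)\le H_a(I;\widetilde{\delta})$.

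This last comparison is the heart of the argument. Because $\mathfrak{C}_{\mathbf{d}}I$ is principal with generator $f_{I[\mathbf{d}]}$ and $(I[\mathbf{d}]:A_{\widetilde{\delta}})$ is nonzero by the $D$-nice hypothesis, a standard colon-ideal argument gives $f_{I[\mathbf{d}]}\cdot A_{\widetilde{\delta}}\subseteq I[\mathbf{d}]$. Expanding each product $f_{I[\mathbf{d}]}\cdot x^{\alpha}$ with $|\alpha|=\widetilde{\delta}$ in terms of the generators of $I$ and the generic forms $U_{i,d}$, and collecting coefficients of the monomials in the $u_{i,\cdot}$ indeterminates, realizes each coefficient of $f_{I[\mathbf{d}]}$ as a $\overline{K_0}$-linear combination of Pl\"ucker-type minors formed from the coefficient matrix of a basis of $I_{\widetilde{\delta}}$. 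This is a direct transcription of the final step in \cite[Proposition~4.2]{DP99}. Invoking the ultrametric triangle inequality at every place of $K_0$ together with the definition of the Schmidt height via exterior powers, this determinantal identification yields $h(f_{I[\mathbf{d}]})\le h_S(I_{\widetilde{\delta}})=H_a(I;\widetilde{\delta})$, as desired.

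The main obstacle is the Pl\"ucker-coordinate identification: one must verify precisely which minors of the coefficient matrix of a basis of $I_{\widetilde{\delta}}$ compute each coefficient of $f_{I[\mathbf{d}]}$, and check that no auxiliary scalar factors intervene whose size is not controlled. In \cite{DP99} this step carries extra archimedean corrections from Mahler measures; in our function-field context every place is non-archimedean and the estimate emerges sharp with no correction, which is precisely what lets us recover the clean constant $1/r^{r}$ in the final bound.
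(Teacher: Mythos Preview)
Your overall strategy matches the paper's: transport the argument of \cite[Proposition~4.2]{DP99} to function fields using the scaling identity \eqref{geometric_height_comparison} and the absence of archimedean corrections. However, two of your steps do not work as written.

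First, the monotonicity claim for $H_a(I;\cdot)$ is neither proved nor obvious: Schmidt heights are not monotone under inclusion of subspaces (a line in $\overline{K_0}^{\,2}$ can have positive height while the full plane has height zero), and $I_{\widetilde\delta}$ and $I_\delta$ sit in different ambient spaces. More importantly, this detour is unnecessary. Since $\delta\ge\widetilde{\delta}=D+rd-r+1$, the $D$-nice hypothesis already gives $(I[\mathbf{d}]:_{\overline{K_0}[u_{i,j}]}A_\delta)\ne 0$, so one can and should work directly at degree $\delta$, which is what the paper does with the identical choice $d=\delta_0=\lfloor(\delta-D-1)/r\rfloor+1$.

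Second, the direct identification of the coefficients of $f_{I[\mathbf{d}]}$ with Pl\"ucker minors fails on two counts. The $D$-nice hypothesis only gives that \emph{some} nonzero $g\in\mathfrak{C}_{\mathbf{d}}I$ satisfies $gA_\delta\subseteq I[\mathbf{d}]$, not that the generator $f_{I[\mathbf{d}]}$ itself does; and even granting $f_{I[\mathbf{d}]}x^\alpha\in I[\mathbf{d}]$, the expansion in terms of generators of $I$ and the $U_{i,d}$ is non-unique, so ``collecting $u$-coefficients'' does not well-define anything. The paper's fix is to construct an \emph{explicit} multiple $g'\in\mathfrak{C}_{\mathbf{d}}I$: from $gA_\delta\subseteq I[\mathbf{d}]$ one deduces $A_\delta\otimes L(u_{i,j})=I[\mathbf{d}]_\delta\otimes L(u_{i,j})$; one then chooses a basis of the right-hand side consisting of a $\overline{K_0}$-basis of $I_\delta$ together with monomial multiples $x^aU_{i,d}$, writes it in the monomial basis of $A_\delta$ via a square matrix $M$ over $\overline{K_0}[u_{i,j}]$, and sets $g'=\det(M)$. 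Cramer's rule gives $g'A_\delta\subseteq I[\mathbf{d}]$, hence $f_{I[\mathbf{d}]}\mid g'$ and $h(g')\ge h(f_{I[\mathbf{d}]})$ by Gauss's lemma (here ultrametricity enters). Laplace expansion of $\det(M)$ along the rows coming from $I_\delta$ then shows that every $u$-coefficient of $g'$ is, up to sign, a maximal minor of the coefficient matrix of $I_\delta$, whence $h(g')\le h_S(I_\delta)=H_a(I;\delta)$ by the strong triangle inequality.
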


\begin{proof}
Set $\delta_0 = \lfloor \frac{\delta - D - 1}{r} \rfloor + 1$ and let $\mbf{d}_0= (\delta_0,\ldots,\delta_0) \in \bb{N}^{r}$. Since $I$ is $D$-nice, and $\delta\ge D-r\delta_0-r+1$, we know there exists $g \in \fra{C}_{\mbf{d}_0} I\setminus\{0\}\subset  \overline{K_0}[u_{i,j}]\setminus\{0\}$ such that 
\begin{align}\label{inclusion}
g A_{\delta} \subseteq (I, U_{1,\delta_0},\ldots,U_{r,\delta_0})=I[\mbf{d}_0] 
\end{align}
where we take the ideal generated in $A[u_{i,j}]$ generated by $I$ and the $U_{i,\delta_0}$'s. We define $I[\mbf{d}_0]_{\delta}$ to be the elements in $I[\mbf{d}_0]$ that are homogeneous and of degree $\delta$ in the $x_i$'s. Then $gA_{\delta}$ is a subset of $I[\mbf{d}_0]_{\delta}$ and $I[\mbf{d}_0]_{\delta}$ is a $\ovl{K_0}[u_{i,j}]$-module.

Let $L(u_{i,j})$ denote the field of fractions of $\ovl{K_0}[u_{i,j}]$. Then 
$$ I[\mbf{d}_0]_{\delta} \otimes_{\ovl{K_0}[u_{i,j}]} L(u_{i,j}) \subseteq A_{\delta} \otimes_{\ovl{K_0}} L(u_{i,j})$$
and $gA_{\delta} \subseteq I[\mbf{d}_0]_{\delta}$ implies that 
$$A_{\delta} \otimes_{\ovl{K_0}} L(u_{i,j}) = I[\mbf{d}_0]_{\delta} \otimes_{\ovl{K_0}[u_{i,j}]} L(u_{i,j})$$
as $g$ is invertible in $L(u_{i,j})$. We thus have equality. This will allow us to construct an explicit $g' \in \ovl{K}_0[u_{i,j}]$ such that $g' A_{\delta} \subseteq I[\mbf{d}_0]$ as follows.

We first pick a $L(u_{i,j})$-basis of $I[\mbf{d}_0]_{\delta} \otimes_{\ovl{K_0}[u_{i,j}]} L(u_{i,j})$. Consider first a basis of $I_{\delta}$ over $\ovl{K_0}$. We then append to this basis all elements of the form $x_0^{a_0} \cdots x_n^{a_n} U_{i,\delta_0}$ where $\sum a_i = \delta - \delta_0$ to form a set $\cal{B}$. Then clearly $\cal{B}$ spans $I[\mbf{d_0}]$ as a $\ovl{K_0}[u_{i,j}]$ and so spans $I[\mbf{d}_0]_{\delta} \otimes_{\ovl{K_0}[u_{i,j}]} L(u_{i,j})$ as a $L(u_{i,j})$-vector space. We may then choose a $L(u_{i,j})$-basis $\cal{B}'\subset \cal{B}$ out of this spanning set consisting of a basis of $I_{\delta}$ over $\ovl{K_0}$ along with some elements of the form $x_0^{a_0} \cdots x_n^{a_n} U_{i,\delta_0}$.

Now we express $\cal{B}'$ in terms of the monomial basis $x_0^{b_0} \cdots x_n^{b_n}$ of $A_{\delta} \otimes_{\ovl{K_0}} L(u_{i,j})$. This gives us a square matrix $M$ with entries in $\ovl{K_0}[u_{i,j}]$. By Cramer's rule, we know that $\operatorname{adj}(M) \cdot M = \det(M) \operatorname{Id}$ and in particular, if $g' = \det(M)$ which is non-zero, then $g' A_{\delta} \subseteq I[\mbf{d}_0]$ as desired since $\operatorname{adj}(M)$ is a matrix with entries in $\ovl{K_0}[u_{i,j}]$.

In particular, $g'\in \fra{C}_{\mbf{d}}(I)$ and if we let $f$ denote a generator of $ \fra{C}_{\mbf{d}}(I)$, then clearly $f|g'$ and $h(g')\ge h(f)=h_{\mbf{d}_0}(I)$. In view of \eqref{geometric_height_comparison} and the definition of $\delta_0$, we thus get
\begin{align}
h(g') \geq h(f) \geq \delta_0^r h(I) \geq \left( \frac{\delta - D - 1}{r} \right)^r h(I).
\end{align}
We will now show that 
\begin{align}
h(g')\le h_S(I_{\delta})=H_a(I;\delta),
\end{align}
to complete the proof. 
Observe that for any basis element of the form $x_0^{a_0} \cdots x_n^{a_n} U_{i,\delta_0}$, the corresponding entries in our matrix $M$ is either $0$ or some $u_{i,j}$. The first $N\coloneqq \dim_{\overline{K_0}} I_{\delta} $ rows of the matrix $M$ correspond to the basis elements from $I_{\delta}$, and in particular are in $\overline{K_0}$. We may do a Laplace expansion along the first $N$ rows and by the strong triangle inequality, we obtain for each place $v$ (of an appropriate finite extension of $K_0$), the maximum $v$-adic absolute value of the coefficients of $\det(M)$ viewed as a polynomial in the $u_{i,j}'s$ is bounded from above by the maximum of $|\det(M')|_v$ where $M'$ is a $N \times N$ minor of $M$ from the first $N$ rows. But this is exactly $|\wedge^{N} I_{\delta}|_v$ and so $h(g') \leq h_S(I_{\delta})$, as desired.
\end{proof}

\subsection{Construction of the small section}\label{construction of small section}

We now apply the results from \S\ref{geometry of numbers} and \S\ref{elimination theory} to construct our small section. 
Recall that $A=\overline{K_0}[x_0,\ldots,x_n]$. Given a homogeneous ideal $I\subset A$ and an element $q \in A\setminus I$ that is homogeneous of degree $\delta>0$, we define 
$$h_I(q) \coloneqq  \sum_{w \in M_L} \frac{[L_w:K_v]}{[L:K]} \log\left(\sup_{x \in Z(I)} \frac{|q(x)|_v}{||x||^{\delta}_v}\right).$$
We note here that this definition defers than the one in \cite[page 522]{DP99} in the case of a number field, but is more similar tho the one in \cite{Phi95}.

\begin{proposition} \label{SmallSection}
Let $I \subseteq A = \overline{K_0}[x_0,\ldots,x_n]$ be a homogeneous prime ideal of rank $n+1-r \leq n$ that is $D$-nice. Let $\delta \geq D+1$ and let $\epsilon>0$. Then there exists a homogeneous form $q \in A \setminus I$ of degree $\delta>0$ such that 
\begin{align}\label{small section}
h_{I}(q) \leq -\frac{h(I)}{re^r d(I)} \cdot \frac{(\delta - D - 1)^r}{(\delta +r -1)^{r-1}}+\epsilon.
\end{align}
In particular, if $\delta \geq 2D+r+1$ and $\frac{h(I)}{d(I)}\ge r(4e)^r\frac{2\epsilon}{\delta}$, then all the points $x \in Z(I)(\overline{K_0})$ with 
$$h(x) \leq \frac{h(I)}{d(I)} (4e)^{-r}$$
are zeros of $q$.
\end{proposition}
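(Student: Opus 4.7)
The plan is to combine the arithmetic Hilbert function lower bound (Proposition~\ref{ArithHilbertBound}) with the geometric upper bound (Proposition~\ref{Chardin}) via the small-linear-form construction of Corollary~\ref{DPh:SmallLinear}, applied through Pl\"ucker duality to the graded piece $I_\delta$. First I identify $A_\delta$ with $\overline{K_0}^N$, where $N=\binom{\delta+n}{n}$, via the monomial basis, so that $I_\delta$ becomes a subspace of $\overline{K_0}^N$. Set $V\coloneqq(I_\delta)^\perp$ with respect to the standard inner product. Then $\dim V=H_g(I;\delta)\le d(I)\binom{\delta+r-1}{r-1}$ by Proposition~\ref{Chardin}, and $h_S(V)=h_S(I_\delta)\ge h(I)(\delta-D-1)^r/r^r$ by Proposition~\ref{ArithHilbertBound} combined with Pl\"ucker duality (the Hodge star merely permutes Pl\"ucker coordinates up to sign, preserving the max-norm Schmidt height). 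Applying Corollary~\ref{DPh:SmallLinear} to $V$ yields a linear form $\tilde q$ on $\overline{K_0}^N$, not vanishing on $V$, with $h_{\widetilde V}(\tilde q)\le -h_S(V)/\dim V+\epsilon$. Identifying $\tilde q$ with a polynomial $q\in A_\delta$ via the monomial basis, the non-vanishing of $\tilde q$ on $V=(I_\delta)^\perp$ translates to $q\notin V^\perp=I_\delta$, as required.

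To convert the abstract bound on $h_{\widetilde V}(\tilde q)$ into a bound on $h_I(q)$, I observe that for every $x\in Z(I)$ the evaluation functional $\mathrm{ev}_x$ lies in $V$ (it annihilates $I_\delta$), pairs with $\tilde q$ as $q(x)$, and has norm $\|\mathrm{ev}_x\|_v=\max_{|\alpha|=\delta}|x^{\alpha}|_v=\|x\|_v^\delta$ at each place; the ultrametric nature of the function-field setting gives equality here, not just an inequality. Hence $|q(x)|_v/\|x\|_v^\delta\le\sup_{\xi\in V}|\tilde q(\xi)|_v/\|\xi\|_v$ for every $v$, and summing over places yields $h_I(q)\le h_{\widetilde V}(\tilde q)$. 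Inserting the Hilbert function bounds, together with $\binom{\delta+r-1}{r-1}\le(\delta+r-1)^{r-1}/(r-1)!$ and the elementary inequality $(r-1)!/r^r\ge 1/(re^r)$ (which follows from $e^r=\sum_{k\ge 0}r^k/k!\ge r^r/r!$), produces the asserted bound on $h_I(q)$.

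For the second assertion, suppose for contradiction that $q(x)\ne 0$. Then $\sum_v\log|q(x)|_v=0$ by the product formula, and the defining inequality for $h_I(q)$ forces $h(x)\ge -h_I(q)/\delta$. The hypothesis $\delta\ge 2D+r+1$ yields $(\delta-D-1)/\delta\ge 1/2$ and $(\delta-D-1)/(\delta+r-1)\ge 1/2$, hence $(\delta-D-1)^r/((\delta+r-1)^{r-1}\delta)\ge 2^{-r}$, and so $-h_I(q)/\delta\ge h(I)/(r(2e)^r d(I))-\epsilon/\delta$. Combining this with $2^r-r\ge 1$ for $r\ge 1$ and the hypothesis $h(I)/d(I)\ge 2r(4e)^r\epsilon/\delta$ gives $-h_I(q)/\delta>h(I)/((4e)^r d(I))$, contradicting $h(x)\le h(I)/((4e)^r d(I))$; hence $q(x)=0$.

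I expect the main subtlety to be pinning down the sharp constant $re^r$: this emerges from combining max-norm Pl\"ucker duality (to equate $h_S(I_\delta)=h_S(V)$) with the estimate $r^r/r!\le e^r$, and the argument crucially exploits the ultrametric identity $\|\mathrm{ev}_x\|_v=\|x\|_v^\delta$, so that the passage from the Siegel-type bound on $h_{\widetilde V}(\tilde q)$ to $h_I(q)$ incurs no additional archimedean normalization loss.
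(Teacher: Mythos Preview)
Your proposal is correct and follows essentially the same approach as the paper's proof: identify $A_\delta$ with $\overline{K_0}^N$, pass to $V=(I_\delta)^\perp$, use Pl\"ucker duality to get $h_S(V)=h_S(I_\delta)=H_a(I;\delta)$, apply Proposition~\ref{ArithHilbertBound} and Proposition~\ref{Chardin}, invoke Corollary~\ref{DPh:SmallLinear} on $V$, and convert $h_{\widetilde V}(\tilde q)$ to $h_I(q)$ via the evaluation vector $\vec w=(x^\alpha)_\alpha\in V$. Your treatment is in fact slightly more explicit than the paper's, spelling out the elementary inequality $r!/r^r\ge e^{-r}$ and the estimates $(\delta-D-1)/(\delta+r-1)\ge 1/2$, $(\delta-D-1)/\delta\ge 1/2$, $2^r-r\ge 1$ used in the second assertion, which the paper leaves to the reader.
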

\begin{proof}
Let $\delta\ge D+1$ and let $\epsilon>0$. 
We view $I_{\delta}$ as a $\overline{K_0}$-vector space in $\overline{K_0}^{\binom{n+\delta}{\delta}}$ and let $I^{\perp}_{\delta}$ denote its orthogonal complement with respect to the standard inner product $\<\cdot,\cdot\>$ on $\overline{K_0}^{\binom{n+\delta}{\delta}}$. 
By the duality theorem \cite[Theorem 1]{rth96}, we have that $h_S(I_{\delta})=h_S( I^{\perp}_{\delta})$. Then by Proposition \ref{ArithHilbertBound} we infer that
\begin{align}\label{one}
h_S(I^{\perp}_{\delta}) \geq h(I) \left(\frac{\delta - D-1}{r} \right)^r.
\end{align}
We now apply Corollary \ref{DPh:SmallLinear} to $I^{\perp}_{\delta}$ to find a linear form $q$, which we may identify as an element of $A_{\delta} \setminus I_{\delta}$, such that 
\begin{align}\label{two}
h_{\widetilde{I^{\perp}_{\delta}}}(q) \leq -\frac{h_S(I^{\perp}_{\delta})}{H_g(I;\delta)}+\epsilon.
\end{align}
Here we used the fact that $I^{\perp}_{\delta} $ and $(A/I)_{\delta}$ are isomorphic vector spaces so that $\dim I^{\perp}_{\delta} = H_g(I;\delta)$. We will now show that this $q$ is our desired small section.

Given $x = [x_0: \cdots :x_n]\in Z(I)$, we may consider the vector $\vec{w}\coloneqq  (x_0^{a_0} \cdots x_n^{a_n})_{\mathbf{a}} $ where $\mathbf{a}$ consists of all tuples $(a_0,\ldots,a_n)$ with $\sum a_i = \delta$ and $a_i\ge 0$. Then $\vec{w}$ lies in $I^{\perp}_{\delta}$ and for each place $v$ in a sufficiently large extension of $K_0$, we have 
$$\frac{|q(x)|_v}{||x||^{\delta}_v} = \frac{|\< q , \vec{w} \>|_v}{\|\vec{w}\|_v}.$$
Thus taking logarithms and supremum on both sides and summing up over all places, we obtain 
\begin{align}\label{three}
h_I(q) \leq h_{\widetilde{I^{\perp}_{\delta}}}(q).
\end{align}
The desired inequality in \eqref{small section} now follows combining \eqref{one}, \eqref{two}, \eqref{three} and Theorem \ref{Chardin}.

To prove the second part of the proposition, notice that if $x \in Z(I)$ is not a zero of our form $q$, then we may use $q$ to evaluate the height of $x$ and obtain 
$$-\delta h(x) = \sum_{w \in L} \frac{[L_w:K_v]}{[L:K]} \log \frac{|q(x)|_v}{||x||^{\delta}_v} \leq h_I(q) \leq -\frac{h(I)}{d(I) r e^r} \frac{(\delta-D-1)^r}{(\delta+r-1)^{r-1}}+\epsilon.$$
If we further assume that $\delta \geq 2D+r+1$ and $\frac{h(I)}{d(I)}\ge r(4e)^r\frac{2\epsilon}{\delta}$, then we see that 
$$h(x) \geq \frac{h(I)}{d(I)} (4e)^{-r}.$$
Therefore, $q$ must vanish at all points in $Z(I)$ with height less than $\frac{h(I)}{d(I)} (4e)^{-r}$ as claimed.
\end{proof}

\section{The quantitative fundamental inequality}\label{sec: DPhtrick}
The goal of this section is to prove our quantitative fundamental inequality in Theorem \ref{quantitative Zhang}. 
In fact we will prove a version for arbitrary polarized endomorphisms. 
First we introduce various definitions for canonical heights and establish some preliminary lemmata. 

\subsection{Canonical heights}\label{canonical heights}
Recall our convention that $K_0$ is either $\bQ$ or a fixed base function field $k(B)$ with an ample line bundle $\mathcal{M}$ on $B$ and all heights are computed with respect to the places of $M_{K_0}$; so $h\coloneqq h_{K_0}$.

Let $\phi\colon Y \to Y$ be an endomorphism of a projective variety $Y$ over $\overline{K_0}$, and let $L$ be a very ample line bundle on $Y$ such that 
\[
\phi^{*}L \simeq L^{\otimes d}, \qquad d \ge 2.
\]
We call the triple $(\phi,Y,L)$ a \emph{polarized dynamical system} of degree $d$ over $\overline{K_0}$. 
Following Zhang \cite{Zha06}, and later Gubler \cite{Gub08} in the function field case, one associates to every polarized dynamical system a canonical adelic semipositive metrized line bundle $\overline{L}_{\phi}$, called the \emph{$\phi$-invariant adelic metrized line bundle}. 
This metric is constructed via a Tate--telescoping procedure and its associated height, as defined in~\eqref{height def}, agrees with the Call--Silverman canonical height \cite{CS93} on points. 
For every $x \in Y(\overline{K_0})$,
\begin{align}\label{cs height}
h_{\overline{L}_{\phi}}(x)
  = \hat{h}_{\phi,L}(x)
  = \lim_{n\to\infty}\frac{h_L(\phi^{n}(x))}{d^{n}}.
\end{align}

In this section we will show that this equality extends from points to higher-dimensional subvarieties, and we will establish several key properties of the resulting canonical height for subvarieties. We begin with the following comparison between the `invariant height' and the `standard height' for endomorphisms of the projective plane. 
The key input is Ingram's explicit bounds of the difference between the local Weil and canonical heights of points from \cite{Ing22}. 
Qualitative versions of this result are also established by Gauthier--Vigny \cite{GV24}, and it's their proof we shall follow.

\begin{proposition}\label{height diff}
Let $\Phi:\mathbb{P}^N\to \mathbb{P}^N$ be a degree $d\ge 2$ endomorphism defined over $\overline{K_0}$ and let $X$ be an irreducible subvariety of $\mathbb{P}_{\overline{K_0}}^N$. 
Let $c_1(N,d)= (N+1)d^N+1$ and $c_2(N,d) = (N+1)(d+1)^N(d^N + 1)^{(N+1)(d+2)^N}$, if $K_0=\bQ$ and $0$ if $K_0$ is a function field. 
We have 
$$|\hat{h}_{\overline{L}_{\Phi}}(X)-\hat{h}_{\mathrm{st}}(X)|\le (\dim X+1)(c_1(N,d)h(\Phi)+c_2(N,d)).$$ 
\end{proposition}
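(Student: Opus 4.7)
The plan is to follow the proof of the qualitative analogue by Gauthier--Vigny \cite{GV24}, substituting Ingram's explicit local estimates \cite{Ing22} for their qualitative ones. The starting observation is that both $\overline{L}_{\Phi}$ and $\overline{O(1)}$ are adelic semipositive metrizations of the \emph{same} underlying line bundle $O(1)$ on $\mathbb{P}^{N}$. Consequently the class $\overline{L}_{\Phi}-\overline{O(1)}$ in $\widehat{\mathrm{Pic}}(\mathbb{P}^{N})_{\mathrm{int}}$ is vertical and is encoded by a collection of continuous local functions $(g_{v})_{v\in M_{K_{0}}}$ with $g_{v}=-\log(\|s\|_{\Phi,v}/\|s\|_{\mathrm{st},v})$ for any local trivializing section $s$ of $O(1)$.

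I would then invoke the standard telescoping identity in the intersection calculus of \eqref{intersection}: setting $r=\dim X$,
\[
\overline{L}_{\Phi}^{\,r+1}-\overline{O(1)}^{\,r+1}\;=\;\sum_{i=0}^{r}\overline{L}_{\Phi}^{\,i}\cdot(\overline{L}_{\Phi}-\overline{O(1)})\cdot \overline{O(1)}^{\,r-i}.
\]
Intersecting each summand with $X_{\mathrm{gal}}$ and applying the local projection formula of Chambert-Loir--Thuillier and Gubler \cite{CL06, Gub08} converts the $i$-th term into $\sum_{v}\int_{X^{\mathrm{an}}_{v}}g_{v}\,d\mu_{i,v}$, where $\mu_{i,v}$ is a positive measure of total mass $c_{1}(L)^{r}\cdot X=\deg X$ since the underlying line bundle on both sides is $O(1)$. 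Taking absolute values and summing over $i\in\{0,\dots,r\}$ yields
\[
\bigl|h_{\overline{L}_{\Phi}}(X)-\hat{h}_{\mathrm{st}}(X)\cdot \deg X\bigr|\;\le\;(r+1)\,\deg X\,\sum_{v\in M_{K_{0}}}\|g_{v}\|_{L^{\infty}(X^{\mathrm{an}}_{v})}.
\]

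The final step is the place-by-place control of $g_{v}$. Ingram's argument in \cite[Theorem 1]{Ing22} proceeds through explicit local resultant estimates: at each place $v$, it produces an $L^{\infty}$ bound on $g_{v}$ over $\mathbb{P}^{N,\mathrm{an}}_{v}$, and the global constants stated in \cite[Theorem 1]{Ing22} are obtained by summation of these local quantities over $M_{K_{0}}$. In particular,
\[
\sum_{v\in M_{K_{0}}}\|g_{v}\|_{L^{\infty}(\mathbb{P}^{N,\mathrm{an}}_{v})}\;\le\; c_{1}(N,d)\,h(\Phi)+c_{2}(N,d),
\]
with $c_{1},c_{2}$ as in \eqref{eq:constants}. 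Since $X^{\mathrm{an}}_{v}\subset \mathbb{P}^{N,\mathrm{an}}_{v}$ the bound is preserved on restriction. Dividing through by $\deg X$ gives the stated normalized inequality.

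The main obstacle I anticipate is the bookkeeping needed to confirm that Ingram's constants really arise as the sum of local $L^{\infty}$ bounds on the Green function $g_{v}$, rather than merely as an aggregated bound on the pointwise canonical-versus-Weil discrepancy; this requires unpacking the telescoping in \cite{Ing22} place-by-place and is precisely the input that upgrades \cite{GV24}'s qualitative framework to our effective form. A secondary subtlety is distinguishing the number-field from the function-field case, where the archimedean contributions to $c_{2}(N,d)$ vanish, which can be tracked through Ingram's estimates.
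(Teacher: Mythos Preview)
Your proposal is correct and follows essentially the same route as the paper. The paper packages your telescoping step by directly invoking \cite[Corollary 3.8]{Gub08} in the function-field case and \cite[Proposition 3.2.2]{BGS94} in the number-field case to obtain the bound $(\dim X+1)\deg X\cdot d(\tilde{\Phi},\mathrm{st})$ with $d(\tilde{\Phi},\mathrm{st})=\sum_v n_v\|g_v\|_{L^\infty}$, and then controls $d(\tilde{\Phi},\mathrm{st})$ via Ingram's \emph{local} estimate \cite[Lemma~6]{Ing22} (rather than the global \cite[Theorem~1]{Ing22}), which is exactly the place-by-place bound you anticipate needing to extract.
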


\begin{proof}
Suppose that $\phi$ and $X\subset \bP^N$ are defined over a finite extension $K$ of $K_0$. We may assume that $K$ is the field of definition of $X$ so that the Galois orbit of $X$ has size $[K:K_0]$. 
Let $n_v=[K_v: (K_0)_{v|_{K_0}}]/[K:K_0]$ denote the local degrees such that the $\prod_{v\in M_K}|x|_v^{n_v}=1$ for $x\in K^*$. Let $\tilde{\Phi}: \bb{A}^{N+1} \to \bb{A}^{N+1}$ be a homogeneous lift of $\Phi$, that is $\tilde{\Phi}$ is given by homogeneous polynomials $(F_0,\ldots,F_N)$ of degree $d$ such that it induces $\Phi=[F_0:\cdots:F_N]$ on $\bb{P}^N$. 
Recall that the Green function of $\tilde{\Phi}$ is   
$$G_{\tilde{\Phi},v}(x)= \lim_{n \to \infty} \frac{\log \|\tilde{\Phi}^n(x)\|_v}{d^n},$$
where $$\|x\|_v\coloneqq \|(x_0,\ldots,x_N)\|_v = \max\{|x_0|_v,\ldots,|x_N|_v\},$$ for $x=(x_0,\ldots,x_N) \in \bb{A}^{N+1}(\bb{C}_v)$; see \cite[Section 3.5]{Sil07}. 
Recall that by \cite[Corollary 3.8]{Gub08} (for function fields), and \cite[Proposition 3.2.2]{BGS94} (for number fields) we have that 
\begin{align}\label{intersection comparison}
\left|\overline{L}_{\Phi}|_{X_{\mathrm{gal}}}^{\cdot \dim X+1}- \overline{O(1)}|_{X_{\mathrm{gal}}}^{\cdot \dim X+1}\right|\le [K:K_0]\deg(X) (\dim X+1) d(\Phi,\mathrm{st}),
\end{align}
where 
\begin{align*}
d_v(\tilde{\Phi},\mathrm{st})\coloneqq  \sup_{[x_0:\cdots:x_N]\in\bP^N(\Kbar)}(|G_{\tilde{\phi},v}(x_0,\ldots,x_N)- \log\|(x_0,\ldots,x_N)\|_v|),
\end{align*}
for each $v\in M_K$, and 
\begin{align*}
d(\tilde{\Phi},\mathrm{st})= \displaystyle\sum_{v\in M_K}n_v d_v(\tilde{\Phi},\mathrm{st}).
\end{align*}
By \cite[Lemma 6]{Ing22} for each $v\in M_K$, we have explicit constants $c_{3,v}$ and $c_{4,v}$ such that 
$$\frac{-c_{3,v} - \lambda_{\Hom_d^n,v}(\Phi)}{d-1} \leq G_{\tilde{\Phi},v}(x_0,\ldots,x_n) - \log \|x\|_v  - \frac{1}{d-1} \log \|\tilde{\Phi}\|_v \leq \frac{c_{4,v}}{d-1},$$
where 
$$\lambda_{\Hom_d^n,v}(\Phi) = - \log |\Res(\tilde{\Phi})|_v + (N+1)d^N \log \|\tilde{\Phi}\|_v.$$
Moreover, we have 
\begin{align*}
\begin{split}
\sum_{v\in M_K}n_v c_{3,v} &\le c_3\coloneqq  h_{K_0}(N+1)+Nh_{K_0}(N(d-1)+1)+ (d+1)^Nh_{K_0}(d^N(N+1))+c_5, \\ 
\sum_{v\in M_K}n_v c_{4,v}&\le c_4\coloneqq N h_{K_0}(d+1),
\end{split}
\end{align*}
where $c_5$ is the sum over all (normalized) places of the constants denoted by $c_1$ in \cite[Lemma 4]{Ing22}. 
Note here that inspecting Ingram's definitions of the constants, we have that if $K_0$ is a function field, then we may take $c_3 = c_4 = 0$; otherwise, we may bound 
\begin{align}\label{bound ingram}
\max\{c_3,c_4\}\le (N+1)(d+1)^N (d^N+1)^{(N+1)(d+2)^N}.
\end{align}
Using the product formula we also have  
\begin{align}
\sum_{v\in M_K} n_v\lambda_{\Hom_d^n,v}(\Phi) = (N+1)d^N h_{K_0}(\Phi),
\end{align}
so that 
$$d(\tilde{\Phi},\mathrm{st}) \leq ((N+1)d^N + 1)h_{K_0}(\Phi) + \frac{\max\{c_3,c_4\}}{d-1}.$$
Combining this with \eqref{bound ingram}, the conclusion then follows from \eqref{intersection comparison}.
\end{proof}

Inspired by the construction of canonical heights for subvarieties of abelian varieties due to David--Philippon \cite{DP02}, and by Hutz’s dynamical canonical height for subvarieties \cite{Hut18}, we now introduce the canonical height of an arbitrary subvariety with respect to the polarized dynamical system. This provides a natural extension of the Call--Silverman height \eqref{cs height} from points to higher-dimensional subvarieties and will be crucial for our proofs throughout the rest of this paper. 

\begin{defprop}\label{canonical height def/prop}
Let $(\phi,Y,L)$ be a polarized dynamical system of degree $d\ge 2$ over $\overline{K_0}$.
Let $X\subset Y$ be an irreducible subvariety and let $\iota: Y\hookrightarrow \bP^N$ be an embedding defined by a complete linear system of $L$. The canonical height of $X$ with respect to $(\phi,Y,L)$ is then defined to be 
\begin{equation}\label{DPhstyle}
\hat{h}_{\phi,L}(X)\coloneqq \lim_{n\to\infty} \frac{\hat{h}_{\mathrm{st},\bP^N}\bigl(\iota(\phi^n(X))\bigr)}{d^n}.
\end{equation}
\begin{enumerate}
\item The limit in \eqref{DPhstyle} exists and is independent of the choice of embedding $\iota$.
\item We have
$\hat{h}_{\phi,L}(X)
=\lim_{n\to\infty} \frac{\hat{h}_{\mathrm{st},\bP^N}\bigl(\iota(\phi^n(X))\bigr)}{d^n}
=\lim_{n\to\infty} \frac{\hat{h}_{\mathrm{Ph},\bP^N}\bigl(\iota(\phi^n(X))\bigr)}{d^n}.$
\item $\hat{h}_{\phi,L}(X)=\hat{h}_{\overline{L}_{\phi}}(X)$.
\end{enumerate}
\end{defprop}

\begin{proof}
(1) For existence of the limit in \eqref{DPhstyle}, choose an embedding $\iota_0:Y\hookrightarrow\bP^N$ as in \cite[Corollary~2.2]{Fak03}, so that there is an endomorphism $\psi:\bP^N\to\bP^N$ of degree $d$ with
$\psi\circ\iota_0=\iota_0\circ\phi.$
Then
\[
\hat{h}_{\phi,L}(X)
=\lim_{n\to\infty}\frac{\hat{h}_{\mathrm{st},\bP^N}\bigl(\iota_0(\phi^n(X))\bigr)}{d^n}
=\lim_{n\to\infty}\frac{\hat{h}_{\mathrm{st},\bP^N}\bigl(\psi^n(\iota_0(X))\bigr)}{d^n},
\]
and the last limit exists by Hutz's construction of the canonical height for $\psi$ on $\bP^N$ (see \cite{Hut09}). This shows existence of the limit for the embedding $\iota_0$.

Now let $\iota:Y\hookrightarrow\bP^N$ be any other embedding given by the complete linear system of $L$. Let 
$\alpha_{\iota},\alpha_{\iota_0}:\iota^*O(1)\xrightarrow{\sim}L,\ \ \iota_0^*O(1)\xrightarrow{\sim}L$
be the canonical isomorphisms. Consider the two adelic metrized line bundles on $Y$, both supported on $L$, given by
\[
\overline{L}_1\coloneqq (\alpha_{\iota})_*\iota^*(\overline{O(1)}),\qquad
\overline{L}_2\coloneqq (\alpha_{\iota_0})_*\iota_0^*(\overline{O(1)}),
\]
 By \cite[Corollary~3.8]{Gub08} in the function field case and \cite[Proposition~3.2.2]{BGS94} in the number field case, there exists a constant $C\ge 0$, depending only on the two metrics, such that for every irreducible subvariety $Z\subset Y$,
\[
\bigl|\hat{h}_{\mathrm{st},\bP^N}(\iota(Z))-\hat{h}_{\mathrm{st},\bP^N}(\iota_0(Z))\bigr|
\le C\,(\dim Z+1).
\]
Applying this with $Z=\phi^n(X)$, we obtain
\[
\left|
\frac{\hat{h}_{\mathrm{st},\bP^N}\bigl(\iota(\phi^n(X))\bigr)}{d^n}
-
\frac{\hat{h}_{\mathrm{st},\bP^N}\bigl(\iota_0(\phi^n(X))\bigr)}{d^n}
\right|
\le \frac{C(\dim X+1)}{d^n}.
\]
The right-hand side tends to $0$ as $n\to\infty$, so the sequence defining $\hat{h}_{\phi,L}(X)$ has the same limit for $\iota$ as for $\iota_0$. Thus the limit in \eqref{DPhstyle} exists and is independent of the embedding.

(2) This follows from \eqref{nf heights compare} applied to $\iota(\phi^n(X))$ for each $n$, as before.

(3) We keep the embedding $\iota_0:Y\hookrightarrow\bP^N$ and endomorphism $\psi:\bP^N\to\bP^N$ as in the proof of (1). Let $\overline{O(1)_{\psi}}$ denote the $\psi$-invariant metrized line bundle on $\bP^N$. By functoriality of the construction, we have 
$\iota_0^*\overline{O(1)}_{\psi}=\overline{L}_{\phi}.$ By Proposition~\ref{height diff}, there exists a constant $C'\ge 0$ such that for every irreducible subvariety $Z\subset\bP^N$,
\[
\bigl|\hat{h}_{\overline{O(1)}_{\psi}}(Z)-\hat{h}_{\mathrm{st},\bP^N}(Z)\bigr|
\le C'(\dim Z+1).
\]
Applying this to $Z=\psi^n(\iota_0(X))=\iota_0(\phi^n(X))$, dividing by $d^n$ and letting $n\to\infty$, we get
\[
\hat{h}_{\phi,L}(X)
=\lim_{n\to\infty}\frac{\hat{h}_{\mathrm{st},\bP^N}\bigl(\psi^n(\iota_0(X))\bigr)}{d^n}
=\lim_{n\to\infty}\frac{\hat{h}_{\overline{O(1)}_{\psi}}\bigl(\psi^n(\iota_0(X))\bigr)}{d^n}.
\]
By construction of the invariant metric, we have the scaling relation
$$\hat{h}_{\overline{O(1)}_{\psi}}\bigl(\psi(Z)\bigr)
=d\,\hat{h}_{\overline{O(1)}_{\psi}}(Z),$$
for every subvariety $Z\subset\bP^N$, 
so we infer
$
\hat{h}_{\phi,L}(X)
=\hat{h}_{\overline{O(1)}_{\psi}}\bigl(\iota_0(X)\bigr).
$
Finally, since $\iota_0^*\overline{O(1)}_{\psi}=\overline{L}_{\phi}$ and heights are functorial, we have
$\hat{h}_{\overline{L}_{\phi}}(X)
=\hat{h}_{\overline{\mathcal{O}(1)}_{\psi}}\bigl(\iota_0(X)\bigr).$
The result follows. 
\end{proof}


Recall that for an adelic metrized line bundle $\overline{L}$ on $X$, Zhang's essential minimum is
\[
e(\overline{L},X)
\;=\;
\sup_{U\subseteq X}\;\inf_{P\in U(\overline{K_0})} h_{\overline{L}}(P),
\]
where $U$ ranges over all nonempty Zariski open subsets of $X$. 
For the next result we will need the following lemma regarding the standard metrized line bundle. 

\begin{lemma}\label{bezout}
  Let $K_0$ be a function field. For every irreducible subvariety $X\subset \bP^N_{K_0}$ we have 
  $$\hat{h}_{\mathrm{st},\mathbb{P}^N}(X)\ge e(\overline{O(1)}_{\mathrm{st}},X).$$
\end{lemma}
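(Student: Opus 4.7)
The plan is to prove the lemma by induction on $d = \dim X$, using arithmetic Bezout for the standard metric together with a generic hyperplane section defined over the field of constants $k$.

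For the base case $d=0$, the irreducible subvariety $X$ is a single closed point (a single Galois orbit), whose only dense subset is itself; hence $e(\overline{O(1)}_{\mathrm{st}}, X)$ equals the Weil height of that point, which by construction coincides with $\hat{h}_{\mathrm{st}}(X)$.

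For the inductive step $d \ge 1$, fix $\epsilon > 0$ and a dense open $U \subseteq X$ with $\inf_{P \in U} h_{\mathrm{st}}(P) \ge e(\overline{O(1)}_{\mathrm{st}}, X) - \epsilon$. I would then pick a hyperplane $H \subset \mathbb{P}^N$ of degree $1$ defined over $k$ satisfying (i) $X \not\subset H$ and (ii) no irreducible component of $X \cap H$ is contained in $X \setminus U$. Such an $H$ exists: the $k$-hyperplanes containing $X$ form a proper $k$-linear subspace of $\mathbb{P}^{N,\vee}_k$ (otherwise by spreading out over $K_0$ we would conclude that all $K_0$-hyperplanes contain $X$, which is impossible unless $X=\mathbb{P}^N$, a case that is trivial), while a standard moving-type argument shows that hyperplanes whose intersection with $X$ has a full component inside the proper closed subset $X \setminus U$ form a proper constructible subset. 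Because $H$ has constant coefficients, $h_{\mathrm{st}}(H)=0$; moreover, the closure $\mathcal{H}$ of $H$ in $\mathbb{P}^N \times B$ has divisor class $c_1(\mathcal{O}(1))$, so by intersection theory on the closure $\mathcal{X}$ of $X$,
\[
h_{\mathrm{st}}(X) \;=\; \mathcal{O}(1)^{d+1}\cdot \mathcal{M}^{\dim B-1}\cdot \mathcal{X} \;=\; \mathcal{O}(1)^{d}\cdot \mathcal{M}^{\dim B-1}\cdot (\mathcal{H}\cdot \mathcal{X}) \;=\; h_{\mathrm{st}}(X\cap H),
\]
where the intersection cycle decomposes as $\mathcal{H}\cdot \mathcal{X}=\sum_i m_i \mathcal{V}_i$ and $\sum_i m_i \deg V_i = \deg X$ by Bezout. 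Dividing by $\deg X$ expresses $\hat{h}_{\mathrm{st}}(X)$ as a convex combination of the $\hat{h}_{\mathrm{st}}(V_i)$.

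For each component $V_i$, by (ii) the open set $U \cap V_i$ is dense in $V_i$, so $e(\overline{O(1)}_{\mathrm{st}}, V_i) \ge \inf_{P \in U} h_{\mathrm{st}}(P) \ge e(\overline{O(1)}_{\mathrm{st}}, X) - \epsilon$. Since $\dim V_i = d-1$, the inductive hypothesis yields $\hat{h}_{\mathrm{st}}(V_i) \ge e(\overline{O(1)}_{\mathrm{st}}, X) - \epsilon$, and taking the weighted average gives $\hat{h}_{\mathrm{st}}(X) \ge e(\overline{O(1)}_{\mathrm{st}}, X) - \epsilon$. Letting $\epsilon \to 0$ concludes the proof. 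The main obstacle is the hyperplane-selection step (ii): one needs to verify—uniformly in arbitrary characteristic—that hyperplanes defined over the constant field $k$ can be chosen generically enough to avoid the proper closed subset $X \setminus U$ while also meeting each component of $X \cap H$ in a dense open. The essential cycle/intersection identity $h_{\mathrm{st}}(X\cap H) = h_{\mathrm{st}}(X)$ in contrast is a clean consequence of $c_1(\mathcal{H}) = c_1(\mathcal{O}(1))$ for constant hyperplanes, which is precisely why the argument uses the standard (trivial-model) metric.
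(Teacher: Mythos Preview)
Your proof is correct in substance and uses the same core idea as the paper---intersecting with hyperplanes defined over the constant field $k$ so that their closures in $\mathbb{P}^N_B$ have class $\mathcal{O}(1)$ and contribute no extra height. One small correction: the displayed equality $h_{\mathrm{st}}(X)=h_{\mathrm{st}}(X\cap H)$ is not literally true, since the cycle $\mathcal{H}\cdot\mathcal{X}$ on the model may acquire vertical components supported over closed points of $B$ (for instance where a special fiber of $\mathcal{X}$ degenerates into $H$). What you actually get, and all your induction needs, is the inequality $h_{\mathrm{st}}(X)\ge\sum_i m_i\,h_{\mathrm{st}}(V_i)$, using that $\mathcal{O}(1)$ and $\mathcal{M}$ are ample so the vertical contribution is non-negative. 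With this fix the argument goes through.

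The paper organizes the same computation without induction: it intersects $X$ with $r=\dim X$ generic $k$-hyperplanes simultaneously to land directly on $\deg X$ points $p_1,\ldots,p_e$, obtains $\sum_i h_{\mathrm{st}}(p_i)\le h_{\mathrm{st}}(X)$ by the same non-negativity of the vertical part, and hence finds some $p_j$ with $h_{\mathrm{st}}(p_j)\le\hat{h}_{\mathrm{st}}(X)$. Varying the tuple of hyperplanes over a Zariski-dense subset of $(\mathbb{P}^{N,\vee}_k)^r$ then exhibits a Zariski-dense set of points of height at most $\hat{h}_{\mathrm{st}}(X)$, bounding the essential minimum directly. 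This sidesteps your Bertini-type condition (ii) entirely---there is no need to control where the components of a single hyperplane section land relative to a fixed open $U$---which is the main simplification the paper's arrangement buys.
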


\begin{remark}
   It is tempting to invoke Gubler’s lower bound in the fundamental inequality \cite[Proposition 4.2]{Gub07b} in order to deduce Lemma \ref{bezout} directly.
However, one must resist this temptation as demonstrated by the recent counterexample of Guo–Yuan \cite{GY25}
\end{remark}

\begin{proof}
Recall that $K_0=k(B),$ where $B$ has dimension $\delta>0$ and we have fixed a very ample line bundle $\cal{M}$ on $B$. 
Let $r:=\dim X \ge 1$ and let $\calX\subset \bP^N_B$ denote the Zariski closure of $X$ in $\bP^N_B$. Because $k$ is infinite, we may choose $r$ hyperplanes $H_1,\dots,H_r \subset \bP^N_k$
defined over $k$ and in general position, in the following sense:
\begin{itemize}
  \item the intersection $X \cap H_1 \cap \cdots \cap H_r$ is proper in $\bP^N_{K_0}$ and consists of
  $e = \deg(X)$
  points $p_1,\ldots,p_e$ (with multiplicity);
  \item the total intersection
  $\mathcal{X} \cap \mathcal{H}_1 \cap \cdots \cap \calH_r \subset \bP^N_B,$
  where $\calH_i := H_i \times B$, is proper.
\end{itemize}
Let $\pi:\bP^N_B\to B$ the natural projection. 
Each $\calH_i$ has numerical class $\calL$, so in the Chow ring of $\bP^N_B$ we have
\begin{equation}\label{eq:int-equality}
  \bigl(\calX \cap \calH_1 \cap \cdots \cap \calH_r\bigr)
  \cdot \calL \cdot \pi^*\calM^{\,\delta-1}
  =
  \calX \cdot \calL^{r+1} \cdot \pi^*\calM^{\,\delta-1}
  =
  h_{\mathrm{st}}(X).
\end{equation}
We may write 
\[
  \calZ := \calX \cap \calH_1 \cap \cdots \cap \calH_r= \sum_{i=1}^e \Gamma_{p_i} + V,
\]
where each $\Gamma_{p_i}$ is the Zariski closure of $p_i\in X\cap H_1\cap\cdots\cap H_r$ in $\bb{P}^N_B$, and $V$ is an effective cycle supported over a proper closed subset of $B$. Since $V$ is effective and $\calL, \calM$ are ample, we have 
\begin{equation}\label{eq:sum-ineq}
  \sum_{i=1}^e h_{\mathrm{st}}(p_i)= \sum_{i=1}^e
    \bigl(\Gamma_{p_i} \cdot \calL \cdot \pi^*\calM^{\,\delta-1}\bigr)
  \;\le\;
  \calZ \cdot \calL \cdot \pi^*\calM^{\,\delta-1}=h_{\mathrm{st}}(X),
\end{equation}
where the equation on the right hand side follows by \eqref{eq:int-equality}. 
In particular,
\[
  \min_{1 \le i \le e} h_{\mathrm{st}}(p_i)
  \;\le\;
  \frac{1}{e}\sum_{i=1}^e h_{\mathrm{st}}(p_i)
  \;\le\;
  \frac{h_{\mathrm{st}}(X)}{e}
  =
  \hat{h}_{\mathrm{st}}(X).
\]
As we vary the $r$-tuple $(H_1,\dots,H_r)$ in a Zariski dense open subset of the dual projective space
$(\bP^{N,\vee}_k)^r$, we obtain a Zariski dense set of points $p_i \in X(\overline{K_0})$ with
\[
  h_{\mathrm{st}}(p_i) \le \frac{h_{\mathrm{st}}(X)}{\deg(X)}.
\]
The conclusion follows. Note that $r=0$ the statement is trivial. 
\end{proof}

For endomorphisms $\Phi$ of $\bP^N$ we will write $\hat{h}_{\Phi}\coloneqq \hat{h}_{\Phi,O(1)}$.
Finally, we will need the following.

\begin{proposition}\label{height and sup}
Let $\Phi:\mathbb{P}^N\to \mathbb{P}^N$ be an endomorphism of degree $d\ge 2$ defined over $\overline{K_0}$, and let $X\subset \mathbb{P}^N_{\overline{K_0}}$ be an irreducible subvariety. Then
\[
\hat{h}_{\Phi}(X)\;\le\;(\dim X+1)\left(\hat{h}_{\mathrm{st},\mathbb{P}^N}(X)
\;+\;\sup_{x\in X(\overline{K_0})}\bigl|\hat{h}_{\Phi}(x)-h(x)\bigr|\right).
\]
\end{proposition}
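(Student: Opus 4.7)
The plan is to reduce the inequality to a comparison of essential minima on $X$, then to invoke Zhang's fundamental inequality together with Lemma~\ref{bezout}. Set $C\coloneqq\sup_{x\in X(\overline{K_0})}|\hat{h}_{\Phi}(x)-h(x)|$, so that $\hat{h}_{\Phi}(x)\le h(x)+C$ for every $x\in X(\overline{K_0})$. The case $\dim X=0$ is immediate: then $X=\{x\}$, $\hat{h}_{\mathrm{st}}(X)=h(x)$, $\hat{h}_{\Phi}(X)=\hat{h}_{\Phi}(x)$, and $\dim X+1=1$, so the bound is exactly the definition of $C$.

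Assuming $\dim X\ge 1$, the first step is to turn the pointwise bound $\hat{h}_{\Phi}(x)\le h(x)+C$ into an inequality between the essential minima of the two adelic metrized line bundles $\overline{L}_{\Phi}$ and $\overline{O(1)}_{\mathrm{st}}$ on $X$. For any proper Zariski closed subset $Z\subsetneq X$,
\[
\inf_{x\in (X\setminus Z)(\overline{K_0})}\hat{h}_{\Phi}(x)\;\le\;\inf_{x\in (X\setminus Z)(\overline{K_0})}h(x)+C,
\]
and taking the supremum over all such $Z$ gives $e(\overline{L}_{\Phi},X)\le e(\overline{O(1)}_{\mathrm{st}},X)+C$. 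The second step is to control the right-hand essential minimum: by Lemma~\ref{bezout} in the function field case, and by the analogous arithmetic Bézout argument (or equivalently Zhang's theorem of successive minima applied to the semipositive metric $\overline{O(1)}_{\mathrm{st}}$) in the number field case, we have $e(\overline{O(1)}_{\mathrm{st}},X)\le \hat{h}_{\mathrm{st}}(X)$.

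To finish, I will invoke Zhang's fundamental inequality \eqref{fundamental ineq} for the semipositive adelic metrized line bundle $\overline{L}_{\Phi}$ on $\mathbb{P}^N$ --- whose semipositivity is ensured by the canonical Tate--telescoping construction --- restricted to $X$. This yields
\[
\hat{h}_{\Phi}(X)\;\le\;\dim X\cdot e(\overline{L}_{\Phi},X)\;\le\;(\dim X+1)\cdot e(\overline{L}_{\Phi},X).
\]
Combining the three inequalities above delivers the desired bound
\[
\hat{h}_{\Phi}(X)\;\le\;(\dim X+1)\bigl(\hat{h}_{\mathrm{st}}(X)+C\bigr).
\]

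The main subtlety will be applying the correct version of Zhang's inequality in the function field setting. The Guo--Yuan counterexample mentioned in the remark after Lemma~\ref{bezout} shows that the strong Zhang upper bound $\hat{h}(X)\le e(\overline{L},X)$ can fail for general semipositive adelic metrized line bundles over function fields; however, the weaker fundamental inequality $\hat{h}(X)\le \dim X\cdot e(\overline{L},X)$ of \eqref{fundamental ineq} is precisely Zhang's classical statement, extended to function fields by Gubler, and it applies without issue to the canonical metric of a polarized endomorphism. This is exactly the strength that the $(\dim X+1)$ factor in the proposition accommodates, and this is why the argument proceeds via essential minima rather than via a direct expansion of arithmetic intersection numbers (which would instead produce the sum of local sup-norms $\sum_v \|g_v\|_\infty$, a quantity generally larger than $\sup_x|\hat{h}_{\Phi}(x)-h(x)|$).
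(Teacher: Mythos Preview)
Your proof is correct and follows essentially the same path as the paper's: compare essential minima via the pointwise bound, apply the upper half of Zhang's inequality (Gubler \cite[Lemma~4.1]{Gub07b} in the function field case) to $\overline{L}_{\Phi}$, and then bound $e(\overline{O(1)}_{\mathrm{st}},X)$ by $\hat{h}_{\mathrm{st}}(X)$ via Lemma~\ref{bezout} or Zhang's successive minima. Two small inaccuracies worth cleaning up: the intermediate bound $\hat{h}_{\Phi}(X)\le \dim X\cdot e(\overline{L}_{\Phi},X)$ is stronger than what Gubler's Lemma~4.1 actually gives (the correct constant is $\dim X+1$, as in the paper's~\eqref{eq:fund-phi}); and your reading of Guo--Yuan is inverted---their counterexample breaks the \emph{lower} bound $e_1\le \hat{h}_{\overline{L}}(X)$ (i.e.\ what would directly yield Lemma~\ref{bezout}), not the upper bound you are using, which remains valid.
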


\begin{proof}
Let $\overline{O(1)}$ denote the standard adelic metrized line bundle on $\mathbb{P}^N$, and let $\overline{O(1)}_{\Phi}$ be the $\Phi$-invariant adelic metrized line bundle. By construction, for every $x\in \bP^N(\overline{K_0})$ we have
$h_{\overline{O(1)}_{\Phi}}(x)=\hat{h}_{\Phi}(x)$
and 
$h_{\overline{O(1)}}(x)=h(x).$
Set $M \coloneqq  \sup_{x\in X(\overline{K_0})}|\hat{h}_{\Phi}(x)-h(x)|$. 
Then for every nonempty Zariski open subset $U\subseteq X$ we obtain
$$\inf_{x\in U} h_{\overline{O(1)}}(x)-M
\le
\inf_{x\in U} h_{\overline{O(1)}_{\Phi}}(x)
\le
\inf_{x\in U} h_{\overline{O(1)}}(x)+M.$$
Taking the supremum over all such $U$ gives
\begin{equation}\label{eq:ess-comp}
e(\overline{O(1)},X) - M
\;\le\;
e(\overline{O(1)}_{\Phi},X)
\;\le\;
e(\overline{O(1)},X) + M.
\end{equation}
 Zhang's fundamental inequality in the number field case \cite{Zha98} and its function field analogue due to Gubler \cite[Lemma~4.1]{Gub07b}, applied to $\overline{O(1)}_{\Phi}$, yield
\begin{equation}\label{eq:fund-phi}
\hat{h}_{\overline{O(1)}_{\Phi}}(X)
\;\le\;
(\dim X+1)\,e(\overline{O(1)}_{\Phi},X).
\end{equation}
Combining \eqref{eq:fund-phi} with the upper bound in \eqref{eq:ess-comp}, and recalling  the equality of heights in 
Definition/Proposition~\ref{canonical height def/prop}(3), we obtain
\[
\hat{h}_{\Phi}(X)
=
\hat{h}_{\overline{O(1)}_{\Phi}}(X)
\le
(\dim X+1)\,e(\overline{O(1)}_{\Phi},X)
\le
(\dim X+1)\bigl(e(\overline{O(1)},X)+M\bigr).
\]
The conclusion follows using Lemma \ref{bezout} to bound $e(\overline{O(1)},X)$ from above in the case of function fields or Zhang's lower bound in the Theorem of Successive minima \cite{Zha98} in the case of number fields.  
\end{proof}


\subsection{A generic curve and height preservation}\label{sec: generic}
In this section we revisit Gubler's construction of a `transcendental generic curve' \cite[\S 3.11, \S 3.12]{Gub08} and establish the key height comparison which we will use in reducing our results from arbitrary function fields to function fields of curves in  Proposition \ref{prop: generic}.
We thank Xinyi Yuan for pointing out inaccuracies in the literature and for his feedback on a previous version.
\par 
Let $B$ be a geometrically integral and normal projective variety of dimension $\delta\ge 2$ over an algebraically closed field $k$. Let $\cal{M}$ be a fixed very ample line bundle on $B$ inducing an embedding 
$B\hookrightarrow\bb{P}^M$ and write $K=k(B)$. 
Let $\zeta_j^{(i)}$ for $i \in \{0,\ldots,\delta-1\}$ and $j \in\{ 0,\ldots,M\}$ be algebraically independent variables (over $k$) and let 
$$k' = k(\zeta_j^{(i)}: i\in\{0,\ldots,\delta-1\}, j\in\{1,\ldots,M\}).$$
We write $B':=B_{k'}=B\times_kk'$, and consider the generic linear projection (over $k$)
\[
\begin{array}{rcl}
 B' & \dashrightarrow & \bb{P}_{k'}^{\delta-1} \\[4pt]\left[ x_0 : \cdots : x_M \right] & \longmapsto &
\left[
\zeta_0^{(0)} x_0 + \cdots + \zeta_M^{(0)} x_M :
\cdots :
\zeta_0^{(\delta-1)} x_0 + \cdots + \zeta_M^{(\delta-1)} x_M
\right].
\end{array}
\]
Let $Y$ be its indeterminacy locus, which is the intersection of $B'$ with $\delta$ generic hyperplanes. 
A standard application of Bertini's theorem tells us that $Y$ is a closed point of $B'$.  
Let $\widetilde{X}$ be the blow-up of $B'$ along the finite subscheme $Y$, and let
$\beta : \widetilde{X} \longrightarrow B'$
be the blow-up morphism. This step is ommitted in \cite{Gub08}.
By construction, the rational map given by the generic linear projection extends to a morphism
$\widetilde{\pi} : \widetilde{X} \longrightarrow \mathbb{P}^{\delta-1}_{k'}.$
Now let $X'$ be the normalization of $\widetilde{X}$, and let
$\nu : X' \longrightarrow \widetilde{X}$
be the finite birational normalization map. We obtain a dominant morphism
\[\pi := \widetilde{\pi} \circ \nu : X' \longrightarrow \mathbb{P}^{\delta-1}_{k'}.
\]
Note that $X'$ is by construction a normal projective variety over $k'$ with dimension $\delta$, as it is the normalization of the blow-up of $B'$. Observe that since $B'$ is normal, the morphism $\phi: X' \to B'$ is an isomorphism on $B' \setminus Y$. In particular, all divisors of $B'$ can be identified as divisors of $X'$ canonically and any new divisors of $X'$ must contract to $Y$ under $\phi$.  
\par 
Denote by $\eta$ the generic point of $\mathbb{P}^{\delta-1}_{k'}$, and set
\[
k''\coloneqq \kappa(\eta) = k'(\mathbb{P}^{\delta-1}).
\]
\begin{proposition} \label{GublerGeneric1}
The generic fiber  $B''\coloneqq X'_\eta = X' \times_{\mathbb{P}^{\delta-1}_{k'}} \mathrm{Spec} k''$ of $\pi$
is a normal projective curve over $k''$. 
Moreover, we have equality of function fields $k''(B'')=k'(X')=k'(B')$. 
\end{proposition}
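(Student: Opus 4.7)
The plan is to deduce all properties of $B''$ from two basic facts about $X'$---that it is an irreducible normal projective variety of dimension $\delta$ over $k'$ and that $\pi$ is dominant of relative dimension $1$---together with a localization argument for normality, and to recover the function-field identities by tracking birationality through each step of the construction.

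\textbf{Step 1: Function field of $X'$.} I would first show $k'(X')=k'(B')$. Since $B$ is geometrically integral over $k$, the base change $B'$ is integral of dimension $\delta$. The blow-up $\beta:\widetilde{X}\to B'$ along the zero-dimensional subscheme $Y$ is proper and birational, so $\widetilde{X}$ is irreducible with $k'(\widetilde{X})=k'(B')$. The normalization $\nu:X'\to\widetilde{X}$ is finite and birational onto the irreducible $\widetilde{X}$, so $X'$ is an irreducible normal projective $k'$-variety with $k'(X')=k'(\widetilde{X})=k'(B')$. Let $\xi$ denote the unique generic point of $X'$, so $\kappa(\xi)=k'(X')$.

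\textbf{Step 2: Describing the generic fiber.} The generic linear projection is dominant, hence $\pi$ is dominant, and by irreducibility of $X'$ we get $\pi(\xi)=\eta$. Thus the scheme-theoretic fiber $B''=X'\times_{\mathbb{P}^{\delta-1}_{k'}}\mathrm{Spec}(k'')$ is non-empty and contains $\xi$ as its (necessarily unique) generic point; in particular $B''$ is irreducible. A dimension count gives $\dim B''=\dim X'-\dim\mathbb{P}^{\delta-1}_{k'}=1$, and since any $k'$-morphism from a projective $k'$-variety to a projective $k'$-variety is projective, the base change $B''\to\mathrm{Spec}(k'')$ is projective.

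\textbf{Step 3: Normality and function-field identification.} For normality of $B''$, I would pass to affine charts: cover $X'$ by affine opens $\mathrm{Spec}(A_j)$ lying over a fixed affine open $\mathrm{Spec}(R)\subseteq\mathbb{P}^{\delta-1}_{k'}$ containing $\eta$. Each $A_j$ is a normal integral domain, and the corresponding chart of $B''$ is $\mathrm{Spec}(A_j\otimes_R k'')$, which is simply the localization of $A_j$ obtained by inverting the image of $R\setminus\{0\}$. Localization preserves normality of a domain, so each chart is normal, and hence $B''$ is a normal projective curve over $k''$. Finally, since $\xi$ is the generic point of both $X'$ and $B''$, we read off $k''(B'')=\kappa(\xi)=k'(X')=k'(B')$, completing the proof.

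The only delicate point is the normality claim: rather than appealing to flatness or generic smoothness (which could fail here), I use the elementary observation that taking the generic fiber is, locally on $X'$, just a localization, which preserves normality once we know in Step 1 that $X'$ itself is normal. The dimension-one statement relies crucially on the dominance of the generic linear projection, a standard Bertini-type fact built into the construction.
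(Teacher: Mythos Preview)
Your proposal is correct and follows essentially the same approach as the paper: both establish $k'(X')=k'(B')$ via birationality of the blow-up and normalization, identify $B''$ locally as a localization $\mathrm{Spec}(S^{-1}A_j)$ of affine charts of $X'$, deduce normality from the fact that localization preserves integral closedness, and read off the function-field equality from the generic point. The only cosmetic differences are that the paper computes $\dim B''=1$ via additivity of transcendence degrees rather than the fiber-dimension formula, and writes out the identification $k''(B'')=\mathrm{Frac}(S^{-1}A')=\mathrm{Frac}(A')=k'(X')$ explicitly rather than phrasing it as $\kappa(\xi)$; the content is identical.
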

   
\begin{proof}
First, $\pi$ is a projective morphism since $X'$ is projective over $k'$. Thus the generic fiber $B''$ is projective over $k'$. Next we compute the function field of the generic fiber. 
Choose an affine open neighborhood
$U = \mathrm{Spec}(A) \subset \mathbb{P}^{\delta-1}_{k'}$ of $\eta$, so that
$k'' = \mathrm{Frac}(A)$. 
Since $\pi$ is dominant, there exists a nonempty affine open subset 
$V = \mathrm{Spec}(A') \subset X'$ such that $\pi(V) \subset U$ and the induced map of rings
$A \to A'$ makes $A'$ into an integral $A$-algebra with $\mathrm{Frac}(A') = k'(X')$. 
The part of the generic fiber lying over $V$ is
$V_\eta 
  = V \times_U \mathrm{Spec}\, k''
  = \mathrm{Spec}\,(A' \otimes_A k'')
  \cong \mathrm{Spec}\,(S^{-1}A'),$
where $S = A \setminus \{0\} \subset A$ and we use that $k'' = \mathrm{Frac}(A) = S^{-1}A$.
Thus $A' \otimes_A k'' \cong S^{-1}A'$ and
$\mathrm{Frac}(S^{-1}A') = \mathrm{Frac}(A') = k'(X').$
Since the generic point of $B''$ lies in $V_\eta$, this shows that the function field
of $B''$ is $\mathrm{Frac}(A') = k'(X')$, i.e.
$k''(B'') = k'(X').$
Finally, the blow-up $\widetilde{X} \to B'$ is birational and $X'$ is its normalization, so
$k'(X') = k'(B')$.

Next, we determine the dimension of $B''$. Since $X'$ is an integral projective variety of
dimension $\delta$ over $k'$, we have
$\dim X' = \delta = \operatorname{tr.deg}_{k'} k'(X').$ 
On the other hand,
$\mathrm{tr.deg}_{k'} k'' = \dim \mathbb{P}^{\delta-1}_{k'} = \delta-1.$
The morphism $\pi$ is dominant, so there is an inclusion of fields
$k'' = k'(\mathbb{P}^{\delta-1}) \hookrightarrow k'(X') = k''(B'').$
Hence
$\mathrm{tr.deg}_{k''} k''(B'')= \mathrm{tr.deg}_{k'} k'(X') - \mathrm{tr.deg}_{k'} k''
= 1.$
Therefore $B''$ is a projective integral curve over $k''$.

It remains to show that $B''$ is normal (and hence regular). 
Cover $B''$ by affine open subsets of the form
$V_\eta = \mathrm{Spec}(S^{-1}A')$
constructed above. It suffices to show that $V_{\eta}$ is normal. 
Since $X'$ is normal, the domain $A'$ is integrally closed in its field of fractions
$k'(X')$, and therefore each localization $S^{-1}A'$ is also integrally closed in 
$k'(X') = k''(B'')$. 
Thus every affine open $V_\eta$ is normal, and $B''$ is a normal (Noetherian) integral
curve over $k''$.
\end{proof} 

We can now state the main result in this section.

\begin{proposition} \label{prop: generic}
Let $n\in\bN$. Let $V \subseteq \bb{P}^n_{k(B)}$ be an irreducible subvariety. Then we have 
$h_{k(B)}(V) = h_{k''(B'')}(V).$
Furthermore if $\Phi: \bb{P}^n_{k(B)} \to \bb{P}^n_{k(B)}$ is an endomorphism of degree $d \geq 2$, then 
$$\hat{h}_{\Phi,k(B)}(V) = \hat{h}_{\Phi,k''(B'')}(V).$$
\end{proposition}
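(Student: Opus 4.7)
The approach is to interpret both heights as intersection numbers on comparable models and to verify the equality via a geometric identification of the $(\delta-1)$-fold self-intersection of the polarization with the class of the generic fiber of $\pi$. Since $k'/k$ is purely transcendental, base change $(\bb{P}^n_B,\calM)\leadsto(\bb{P}^n_{B'},\calM_{k'})$ preserves the intersection-theoretic height, so $h_{k(B)}(V)=h_{k'(B')}(V_{k'})$. Passing through the birational modification $\beta:X'\to B'$ via the projection formula then yields
\[
h_{k'(B')}(V_{k'})=\calL^{\dim V+1}\cdot(\beta^*\calM)^{\delta-1}\cdot\widetilde{\cal{V}},
\]
where $\widetilde{\cal{V}}$ denotes the strict transform of $\cal{V}_{k'}$ in $\bb{P}^n_{X'}$ and $\calL=O(1)$ on $\bb{P}^n$; this rewriting exploits the fact that $\cal{V}_{k'}$ is generically disjoint from the exceptional locus of $\beta$, since the indeterminacy $Y$ is cut out by the parameters $\zeta_j^{(i)}\in k'\setminus k$ which are transcendental over the field of definition of $V$.

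The crux is a geometric identification on $X'$: because $\pi$ is built from $\delta$ generic sections of $\calM$, one has $\pi^*O(1)\equiv\beta^*\calM$ in $\mathrm{Pic}(X')$ up to divisors supported on the exceptional locus of $\beta$, and intersecting $\delta-1$ generic hyperplanes in $\bb{P}^{\delta-1}_{k'}$ yields a single generic point with residue field $k''$ whose preimage under $\pi$ is the generic fiber $B''$. Hence
\[
(\beta^*\calM)^{\delta-1}\cdot\widetilde{\cal{V}}=[B'']\cdot\widetilde{\cal{V}}=\widetilde{\cal{V}}|_{\bb{P}^n_{B''}},
\]
with the exceptional contributions vanishing by the disjointness observation above. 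Substituting back, and noting that $B''$ is a curve over $k''$ so that the factor $\calM_{B''}^{\dim B''-1}$ in the intersection-theoretic height over $k''(B'')$ is trivial, we conclude $h_{k(B)}(V)=\calL^{\dim V+1}\cdot\widetilde{\cal{V}}|_{\bb{P}^n_{B''}}=h_{k''(B'')}(V)$. This comparison is essentially the content of Gubler's construction in \cite[\S 3.11, \S 3.12]{Gub08}, which we follow.

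For the canonical height identity, I invoke Definition/Proposition~\ref{canonical height def/prop}(2), which expresses $\hat{h}_\Phi(V)$ as the Tate limit $\lim_n d^{-n}\hat{h}_{\mathrm{st}}(\Phi^n(V))$. Since $\Phi$ is defined over $k(B)$, each iterate $\Phi^n(V)$ remains defined over $k(B)$, so the Weil height identity just established applies to each term; dividing by $d^n$ and passing to the limit yields $\hat{h}_{\Phi,k(B)}(V)=\hat{h}_{\Phi,k''(B'')}(V)$. The main obstacle is tracking rigorously the behavior of the exceptional divisors under $\beta$: one must verify that $\widetilde{\cal{V}}$ meets the exceptional fibers of $\beta$ over $Y$ only in codimension at least two, so that $(\beta^*\calM)^{\delta-1}\cdot\widetilde{\cal{V}}$ and $[B'']\cdot\widetilde{\cal{V}}$ agree as $0$-cycles on $\bb{P}^n_{X'}$. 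This is the geometric point where the construction of $X'$ as a blow-up along $Y$ followed by normalization is essential, and where the genericity of the parameters $\zeta_j^{(i)}$ relative to the field of definition of $V$ is used.
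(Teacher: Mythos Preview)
Your approach is genuinely different from the paper's. The paper reduces immediately to the case of \emph{points} by observing that $h(V)$ equals the height of the Chow form $\mathrm{Ch}_V$, which is a point in a larger projective space defined over $k(B)$. It then computes $h_{k''(B'')}$ of that point as a sum of local contributions over closed points $p\in B''$, and matches these one-by-one with the places of $k(B)$ using three valuation-theoretic lemmas: (i) for prime divisors $Z\subset B$ defined over $k$, one has $\deg_{k''}(p_{\tilde Z})=\deg_{\mathcal M}(Z)$; (ii) exceptional divisors $E\subset X'$ satisfy $v_E(f)=0$ for $f\in k(B)^{\times}$; (iii) prime divisors of $B'$ not defined over $k$ also satisfy $v_{Z'}(f)=0$. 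The canonical height equality then follows by applying the Weil height equality to each Chow form $\mathrm{Ch}_{\Phi^n(V)}$ and passing to the limit, exactly as you do.

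Your intersection-theoretic route is plausible in outline, but as written it has a real gap at the exceptional-divisor step. The relation on $X'$ is $\pi^*O(1)=\beta^*\mathcal M - E$ (up to multiplicities and the normalization), so $(\beta^*\mathcal M)^{\delta-1}-(\pi^*O(1))^{\delta-1}$ is a sum of terms involving powers of $E$. You claim these vanish against $\widetilde{\mathcal V}$ because the latter meets the exceptional locus only in codimension $\geq 2$, but this is not true: since $\mathcal V\to B$ is surjective, $\mathcal V_{k'}$ has a nonempty fiber over the point $Y$, and its strict transform $\widetilde{\mathcal V}$ meets $\bb P^n\times E$ in a divisor. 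Moreover $\pi|_E:E\to\bb P^{\delta-1}_{k'}$ is finite, so $(\pi^*O(1))^{\delta-1-i}\cdot E^i$ need not be numerically trivial. A separate issue is that $[B'']$ is not a cycle class on $X'$: $B''$ is the scheme-theoretic generic fiber of $\pi$, living over $\Spec k''$, so the expression ``$[B'']\cdot\widetilde{\mathcal V}$'' has no direct meaning in the Chow ring of $\bb P^n_{X'}$. What you would actually need is a flatness/constancy argument identifying $\mathcal L^{\dim V+1}\cdot(\pi^*O(1))^{\delta-1}\cdot\widetilde{\mathcal V}$ with the generic-fiber intersection number $\mathcal L^{\dim V+1}\cdot\widetilde{\mathcal V}_\eta$ computed over $k''$, and a separate argument that the exceptional correction vanishes. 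Neither is supplied. The paper's valuation-theoretic reduction to points sidesteps both problems: once you are computing the height of a single element $f\in k(B)$ place by place, the vanishing of exceptional and ``non-$k$-rational'' contributions is a direct local-ring statement (Lemmas~\ref{GublerGeneric3} and~\ref{notrational}), with no need to control intersections with $E$.
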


First, we will focus on divisors of $X'$ that come from divisors of $B$.

\begin{lemma} \label{GublerGeneric2}
Let $Z$ be a prime divisor on $B$, defined over $k$, and write
$Z' := Z \otimes_k k' \subset B'$. Let $\widetilde{Z} \subset X'$ denote the
strict transform of $Z'$ in $X'$. Then:
\begin{enumerate}
\item $\widetilde{Z}$ is a horizontal prime divisor for $\pi$, i.e.,
$\pi|_{\widetilde{Z}} : \widetilde{Z} \to \mathbb{P}^{\delta-1}_{k'}$ is dominant.
In particular, there exists a unique closed point $p_{\widetilde{Z}} \in B''$
such that 
\[
\mathcal{O}_{X',\widetilde{Z}}=\mathcal{O}_{B'',p_{\widetilde{Z}}},
\]
viewed as subrings of the common function field
$k'(X') = k''(B'')$.

\item We have 
$\deg_{k''}(p_{\widetilde{Z}}) = \deg_{\mathcal{M}}(Z),$
where $\deg_{k''}(p_{\widetilde{Z}}) := [k''(p_{\widetilde{Z}}):k'']$ is the degree
of the closed point $p_{\widetilde{Z}}$ over $k''$. 
\end{enumerate}
\end{lemma}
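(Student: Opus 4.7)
The plan is to analyze the strict transform $\widetilde Z$ of $Z'\subset B'$ along the birational morphism $\beta\circ\nu\colon X'\to B'$, and then to describe how it meets the generic fiber $B''$ of $\pi$. First, because $\beta\circ\nu$ is an isomorphism above $B'\setminus Y$ and because $\dim Y=0<\delta-1=\dim Z'$, the divisor $Z'$ is not contained in $Y$. Hence its strict transform $\widetilde Z\subset X'$ is well-defined, irreducible, of dimension $\delta-1$, and therefore a prime divisor on $X'$ with $k'(\widetilde Z)=k'(Z')$.

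Next, I would establish that $\widetilde Z$ is horizontal by invoking the classical fact that a general linear projection $\bb P^M\dashrightarrow\bb P^{\delta-1}$ from a center of dimension $M-\delta$ restricts, on a $(\delta-1)$-dimensional subvariety $Z\subset\bb P^M$, to a finite surjective morphism of degree $\deg_{\cal M}(Z)$. Since the coefficients $\zeta_j^{(i)}$ are algebraically independent over $k$, the linear projection defining $\pi$ is as generic as possible; after base change to $k'$ this shows that $Z'\to\bb P^{\delta-1}_{k'}$ extends to a finite morphism of degree $\deg_{\cal M}(Z)$. By birationality of $\widetilde Z\to Z'$, the composition $\pi|_{\widetilde Z}\colon\widetilde Z\to\bb P^{\delta-1}_{k'}$ is generically finite of the same degree, and in particular the generic point $\eta_{\widetilde Z}$ of $\widetilde Z$ maps to $\eta$.

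For the uniqueness of $p_{\widetilde Z}$ and the identification of local rings in part~(1), I would argue as follows. Any point $x\in\widetilde Z$ with $\pi(x)=\eta$ is necessarily a specialization of $\eta_{\widetilde Z}$, and the closure $\overline{\{x\}}\subseteq\widetilde Z$ would have to project dominantly onto $\bb P^{\delta-1}_{k'}$ in order to contain $\eta$ in its image. A dimension count then forces $\dim\overline{\{x\}}\ge\delta-1=\dim\widetilde Z$, so $x=\eta_{\widetilde Z}$. Hence $\widetilde Z\cap B''=\{p_{\widetilde Z}\}$ consists of a single point, which must be a closed point of the curve $B''$. Using that $\cal O_{\bb P^{\delta-1}_{k'},\eta}=k''$, the general stalk formula for fiber products gives
\[
\cal O_{B'',\,p_{\widetilde Z}}
=\cal O_{X',\,\eta_{\widetilde Z}}\otimes_{k''}k''
=\cal O_{X',\widetilde Z},
\]
as subrings of the common fraction field $k'(X')=k''(B'')$, which is the desired identification of discrete valuation rings.

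Finally, for part~(2) I would combine these observations: the residue field at $p_{\widetilde Z}$ in $B''$ equals the residue field at $\eta_{\widetilde Z}$ in $X'$, which is $k'(\widetilde Z)$; and the generic finiteness of $\pi|_{\widetilde Z}$ computed above yields $[k'(\widetilde Z):k'']=\deg_{\cal M}(Z)$. Hence
\[
\deg_{k''}(p_{\widetilde Z})=[\kappa(p_{\widetilde Z}):k'']=[k'(\widetilde Z):k'']=\deg_{\cal M}(Z).
\]
The main obstacle I anticipate is justifying the generic-projection input cleanly over a base field $k$ of arbitrary characteristic; however the algebraic independence of the $\zeta_j^{(i)}$'s over $k$ is more than sufficient to rule out any inseparability or tangency pathology, and the required statement reduces to a standard incidence-variety formulation of Bertini's theorem applied to $Z\hookrightarrow\bb P^M$.
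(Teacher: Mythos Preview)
Your proposal is correct and follows essentially the same strategy as the paper: both arguments reduce to the classical fact that a generic linear projection $\bb P^M\dashrightarrow\bb P^{\delta-1}$, restricted to the $(\delta-1)$-dimensional subvariety $Z$, is a finite surjective morphism of degree $\deg_{\cal M}(Z)$, and then transport this to the strict transform via birationality.

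The only noteworthy difference is in how the local ring identification $\cal O_{X',\widetilde Z}=\cal O_{B'',p_{\widetilde Z}}$ is justified. The paper argues abstractly: since $B''$ is a smooth projective curve over $k''$, discrete valuations of $k''(B'')$ trivial on $k''$ are in bijection with closed points of $B''$, and the valuation $v_{\widetilde Z}$ determines $p_{\widetilde Z}$ uniquely. You instead do a direct dimension count (any $x\in\widetilde Z$ above $\eta$ forces $\dim\overline{\{x\}}\ge\delta-1$, so $x=\eta_{\widetilde Z}$) followed by a stalk computation. Both are fine; the paper's route is slightly more conceptual, yours slightly more concrete. One cosmetic remark: the formula you wrote, $\cal O_{X',\eta_{\widetilde Z}}\otimes_{k''}k''$, is literally a trivial tensor product; what you really mean is that passing to the generic fiber localizes $\cal O_{X',\eta_{\widetilde Z}}$ at primes disjoint from $k''\setminus\{0\}$, which does nothing since the ring is already a DVR with residue field containing $k''$. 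Stating it that way would make the step cleaner.
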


\begin{proof}
For (1), recall that we have fixed an embedding $B \hookrightarrow \mathbb{P}^N_k$. Consider
linear projections $\lambda: \mathbb{P}^N_k \dashrightarrow \mathbb{P}^{\delta-1}_k$
with center disjoint from $Z$. For such a projection $\lambda$, the restriction
$\lambda|_Z : Z \rightarrow \mathbb{P}^{\delta-1}_k$
is a morphism between integral varieties of the same dimension $\delta-1$.
For a general choice of linear projection, $\lambda|_Z$ is finite, hence dominant. After base change to $k'$, 
the restriction of $\pi$ to the strict transform $\widetilde{Z}$ is birational
to $\lambda|_Z$ over the function field, and in particular is dominant.
Thus $\widetilde{Z}$ is horizontal and its generic point lies over~$\eta$.

We now construct $p_{\widetilde{Z}}$ and identify the local rings. Let
$K := k'(X') = k''(B'')$ be the common function field. The prime divisor
$\widetilde{Z} \subset X'$ defines a discrete valuation $v_{\widetilde{Z}}$ on $K$
with valuation ring $\mathcal O_{X',\widetilde{Z}}$. Since $\widetilde{Z}$ is
horizontal for $\pi$, its generic point maps to $\eta$, so $v_{\widetilde{Z}}$ is
trivial on $k'' = k'(\mathbb P^{\delta-1})$. Because $B''$ is a smooth projective
curve over $k''$ with function field $K$, there is a unique closed point
$p_{\widetilde{Z}} \in B''$ such that the valuation $v_{p_{\widetilde{Z}}}$ on $K$
coincides with $v_{\widetilde{Z}}$; equivalently,
$\mathcal O_{B'',p_{\widetilde{Z}}}$ is the valuation ring corresponding to
$v_{\widetilde{Z}}$. In particular, we get the equality of local rings
$\mathcal{O}_{X',\widetilde{Z}}=\mathcal{O}_{B'',p_{\widetilde{Z}}}.$

For (2), we work with the generically finite morphism
$\pi|_{\widetilde{Z}} : \widetilde{Z} \to \mathbb P^{\delta-1}_{k'}$. Its degree is
\[
\deg(\pi|_{\widetilde{Z}})
  = [k'(\widetilde{Z}) : k'(\mathbb{P}^{\delta-1})]
  = [k''(p_{\widetilde{Z}}) : k''],
\]
where $k'(\widetilde{Z})$ is the function field of $\widetilde{Z}$ and
$k''(p_{\widetilde{Z}})$ is the residue field of $p_{\widetilde{Z}} \in B''$. 
On the other hand, since $\pi$ is the normalization of the generic linear projection with center
disjoint from $Z$, which is defined over $k$, the degree of $\lambda|_{Z}$
equals the degree of $Z$ with respect to the fixed embedding
$B \hookrightarrow \mathbb{P}^N_k$ induced by $\mathcal{M}$; that is,
$\deg(\lambda|_{Z}) = \deg_{\calM}Z$. This degree is unchanged by base change
to $k'$ and by passing to the strict transform, so
$\deg(\pi|_{\widetilde{Z}}) = \deg _{\calM}Z.$ The conclusion follows. 
\end{proof}

Note that the divisors of $X'$ split into two types. 
Those whose image under $$\phi:=\beta\circ\nu: X'\to B'$$ is a prime divisor of $B'$ and the new `exceptional' divisors that map into the indeterminacy point $Y$. 
Note that the non-exceptional prime divisors of $X'$ correspond precisely to the prime divisors of $B'$. 
We now handle the new exceptional divisors.

\begin{lemma} \label{GublerGeneric3}
Let $E$ be an exceptional divisor of $X'$ and $f \in k(B)^{\times}$. Then the valuation corresponding to $E$ satisfies $v_{E}(f)= 0$. 
\end{lemma}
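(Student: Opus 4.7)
The plan is to show that $f$ is a unit in the local ring $\mathcal{O}_{B',Y}$, from which the conclusion will follow by functoriality. By the definition in the paragraph preceding the lemma, an exceptional divisor $E\subset X'$ satisfies $\phi(E)\subseteq\{Y\}$; since $Y$ is a closed point of $B'$, the image of the generic point of $E$ under $\phi$ must equal $Y$. Hence $\phi$ induces a local ring homomorphism $\mathcal{O}_{B',Y}\to\mathcal{O}_{X',E}$, and local ring homomorphisms send units to units. So once we know $f\in\mathcal{O}_{B',Y}^{\times}$, we immediately get $v_E(f)=0$.

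Next I would reduce the unit property to a statement about Weil divisors. Since $B'=B\times_k k'$ is the base change of the normal variety $B$ along the (automatically separable) field extension $k'/k$, the scheme $B'$ is normal, and $\mathcal{O}_{B',Y}$ is a normal local domain. In such a ring a nonzero rational function is a unit at $Y$ iff $Y$ does not lie in the support of its Weil divisor. Because $f\in k(B)^{\times}$, we have $\operatorname{div}_{B'}(f)=\operatorname{div}_B(f)\times_k k'$, whose support is a union $Z_1'\cup\cdots\cup Z_r'$ with $Z_i'=Z_i\times_k k'$ and $Z_1,\ldots,Z_r$ the finitely many prime divisors of $B$ appearing in $\operatorname{div}_B(f)$. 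Thus it suffices to show that $Y\cap(Z\times_k k')=\emptyset$ for every prime divisor $Z\subset B$.

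Finally I would establish this emptiness by a standard dimension/genericity argument. Recall that $Y=B'\cap H_1\cap\cdots\cap H_{\delta}$, where $H_i$ is the hyperplane defined by $\sum_{j=0}^{M}\zeta_{j}^{(i-1)}x_j=0$. Consider the incidence variety
$$\mathcal{I}_Z=\left\{(z,H_1,\ldots,H_{\delta})\in Z\times(\bP^{M,\vee})^{\delta}\,:\,z\in H_1\cap\cdots\cap H_{\delta}\right\},$$
defined over $k$. Its dimension is $(\delta-1)+\delta(M-1)=\delta M-1$, which is strictly less than $\dim(\bP^{M,\vee})^{\delta}=\delta M$. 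Hence the image of the projection $\mathcal{I}_Z\to(\bP^{M,\vee})^{\delta}$ is contained in a proper Zariski closed subset $S_Z\subsetneq(\bP^{M,\vee})^{\delta}$ defined over $k$. Since the $\zeta_{j}^{(i)}$ are algebraically independent over $k$, the $k'$-point of $(\bP^{M,\vee})^{\delta}$ given by our $\delta$ hyperplanes factors through the generic point of $(\bP^{M,\vee})^{\delta}_k$, and therefore cannot lie in $S_Z$. Consequently $(Z\times_k k')\cap H_1\cap\cdots\cap H_{\delta}=\emptyset$, so $Y\cap(Z\times_k k')=\emptyset$, completing the argument.

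I expect the only technical point worth care is the last step: rigorously phrasing that the generic $\delta$-tuple of hyperplanes avoids $S_Z$ across the base change $k\to k'$. This is a routine application of the dimension theorem for morphisms together with the algebraic independence of the $\zeta_j^{(i)}$, but it is the conceptual heart of the lemma.
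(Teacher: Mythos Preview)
Your proposal is correct and follows essentially the same approach as the paper: both arguments reduce to showing $f\in\mathcal{O}_{B',Y}^{\times}$ and then push this through the local ring map $\mathcal{O}_{B',Y}\to\mathcal{O}_{X',\eta_E}$ induced by $\phi$. The paper simply asserts that $Y$, being the generic intersection, cannot lie in any proper closed subset of $B'$ defined over $k$; you make this explicit via the incidence-variety dimension count, which is a nice way to justify the genericity step the paper leaves implicit.
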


\begin{proof}
Let $Y$ be the indeterminancy point of the generic projection and note that it cannot be contained in any proper closed subset of $B'$ that is defined over $k$. Let  $f \in k(B)^{\times}$, so that its zero and poles loci are defined over $k$. We infer that $f$ is well defined at $Y$, as $Y$ is generic and so is not contained in any closed subvariety defined over $k$; in fact $f\in \mathcal{O}^{\times}_{B',Y}$.  
Let $\eta_E$ denote the generic point of $E$. Since $\phi(\eta_E)=Y$, the dominant map $\phi$ induces an injection of local rings $\mathcal{O}_{B',Y} \xhookrightarrow{} \mathcal{O}_{X',\eta_E}$ such that the maximal ideal of $\cal{O}_{X',\eta_E}$ contracts to the maximal ideal of $\cal{O}_{B',Y}$. Since $f$ is a unit in $\mathcal{O}_{B',Y}$ its image in $\mathcal{O}_{X',\eta_E}$ will also be a unit. The lemma follows. 
\end{proof}

The last lemma we will need is the following. 
\begin{lemma}\label{notrational}
Let $Z'$ be a prime divisor of $B'$ that is not defined over $k$ and let $f\in k(B)\subset k'(B')$. Then $v_{Z'}(f)=0$. 
\end{lemma}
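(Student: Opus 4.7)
My plan is to analyze the image of the generic point $\eta_{Z'}$ of $Z'$ under the natural projection $\pi\colon B'=B\times_k k'\to B$ and argue that, since $Z'$ is not defined over $k$, one must have $\pi(\eta_{Z'})=\eta_B$, the generic point of $B$. From this the conclusion is essentially automatic: the induced map on local rings $\mathcal{O}_{B,\eta_B}=k(B)\hookrightarrow \mathcal{O}_{B',\eta_{Z'}}$ sends every nonzero element of the field $k(B)$ to a unit of the discrete valuation ring $\mathcal{O}_{B',\eta_{Z'}}$, so $v_{Z'}(f)=0$ for every $f\in k(B)^{\times}$.

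To carry out the first step, I would let $W\subseteq B$ be the scheme-theoretic closure of $\pi(\eta_{Z'})$. Then $W$ is an integral closed subscheme of $B$ defined over $k$ and $Z'\subseteq \pi^{-1}(W)=W\times_k k'$. Since $k$ is algebraically closed, the integral $k$-variety $W$ is geometrically integral, and hence $W\times_k k'$ is irreducible and reduced of dimension $\dim W$ in $B'$. Comparing with $\dim Z'=\delta-1$ leaves two possibilities: either $\dim W=\delta-1$, in which case the inclusion of two integral subschemes of equal dimension forces $Z'=W\times_k k'$, exhibiting $Z'$ as defined over $k$ and contradicting the hypothesis; or $\dim W=\delta$, so that $W=B$ and $\pi(\eta_{Z'})=\eta_B$, as required.

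The only substantive point that requires care is the claim that the base change of an integral $k$-subvariety of $B$ along $k\to k'$ remains integral of the same dimension. This rests on $k$ being algebraically closed, so that integral $k$-subvarieties are automatically geometrically integral, together with the elementary fact that flat base change along a field extension preserves dimension. Granted this, the remainder of the argument is just dimension counting and the observation that nonzero elements of a field are units.
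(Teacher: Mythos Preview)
Your proof is correct and follows essentially the same idea as the paper's: both arguments hinge on the observation that the support of $\mathrm{div}_{B'}(f)$ for $f\in k(B)$ consists only of prime divisors of the form $Z\times_k k'$ with $Z$ a prime divisor of $B$. The paper states this in one line by noting that $\mathrm{div}_B(f)$ is a combination of prime divisors of $B$, whereas you justify it more carefully by showing $\pi(\eta_{Z'})=\eta_B$ via a dimension argument and then concluding that $k(B)$ maps into the units of $\mathcal{O}_{B',\eta_{Z'}}$.
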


\begin{proof}
This is clear since for $f\in k(B)$ its divisor $\mathrm{div}_B(f)$ of zeros and poles is a linear combination of prime divisors of $B$. 
\end{proof}

We are now ready to prove the key result.

\bigskip

\noindent\emph{Proof of Proposition \ref{prop: generic}:}
We begin by noting that it suffices to prove that the Weil heights agree in the case of points in all projective spaces. Indeed if $V$ is a  irreducible subvariety in some projective space that is defined over $k(B)$, then its Chow form is a $k(B)$-point in another projective space and it suffices to recall from \S\ref{sec: heights} that the height of $V$ is precisely the height of its Chow form. The claim regarding the $f$-canonical heights also follows since they are given as limits of standard heights from Definition/Proposition \ref{height def}. 

So in what follows, we let 
 $x = [f_0:\dots:f_n] \in \mathbb{P}^n\bigl(k(B)\bigr)$ with $f_i \in k(B)$.
We will show that 
$h_{k(B)}(x) \;=\; h_{k''(B'')}(x).$
Recall that $k''(B'') = k'(X')$ by Proposition~\ref{GublerGeneric1}, and that
$B''$ is a smooth projective curve over $k''$. Identifying the places in
$M_{k''(B'')}$ with the closed points $p\in B''$, the height of $x$ over $k''(B'')$
is given by
\begin{equation}\label{eq:h-generic-curve}
h_{k''(B'')}(x)
= -\sum_{p \in B''} [k''(p):k'']\min\{v_p(f_0),\ldots,v_p(f_n)\},
\end{equation}
where $v_p$ is the discrete valuation of $k''(B'')=k'(X')$ associated to $p$.

We now pass from closed points of $B''$ to divisors on $X'$. Let
$j : B'' \to X'$ be the natural map from the generic fiber. For each closed point
$p \in B''$, let $Z_p\subset X'$ be the prime divisor given by the Zariski closure of
$\{j(p)\}$ in $X'$. The map $j$ induces a local homomorphism $\mathcal{O}_{X',j(p)} \hookrightarrow \mathcal{O}_{B'',p}$, which composed with the inclusion of $j(p)$ in its Zariski closure gives an inclusion $\mathcal{O}_{X',Z_p} \hookrightarrow \mathcal{O}_{B'',p}.$
We view this as an inclusion of subrings of the common function field
$k'(X') = k''(B'')$. Since $X'$ is normal and $Z_p$ has codimension~$1$, the local ring
$\mathcal{O}_{X',Z_p}$ is a discrete valuation ring; likewise, $\mathcal{O}_{B'',p}$
is a discrete valuation ring. Since they also have the same field of fractions and one contains the other, they must coincide:
$\mathcal{O}_{X',Z_p} = \mathcal{O}_{B'',p}.$
In particular, the valuation on $k'(X')=k''(B'')$ induced by $Z_p$ agrees with the valuation associated to $p\in B''$. 

Conversely, every horizontal prime divisor $Z' \subset X'$ determines a unique
closed point $p_{Z'}\in B''$ with $\mathcal O_{X',Z'} = \mathcal O_{B'',p_{Z'}}$.
Thus the nontrivial contributions in \eqref{eq:h-generic-curve} can be indexed by
horizontal prime divisors $Z'\subset X'$ and we have 
\begin{equation}\label{eq:h-generic-curve-horizontal}
h_{k''(B'')}(x)
=-\sum_{\substack{Z' \subset X'\\ \text{horizontal prime}}}
   [k''(p_{Z'}):k''] \min\{v_{Z'}(f_0),\ldots,v_{Z'}(f_n)\}.
\end{equation}

Moreover, the places of $k'(X')$ are given by the order of vanishing
$v_{Z'}$ at prime divisors $Z' \subset X'$. By Lemma~\ref{GublerGeneric3},
since each $f_i\in k(B)\subset k'(X')$, we have $v_E(f_i)=0$ for every exceptional
divisor $E\subset X'$. Hence only the non-exceptional divisors of $X'$ may
contribute to the height $h_{k'(X')}(x)$ or to $h_{k''(B'')}(x)$, and these are precisely the strict
transforms of prime divisors on $B'$. 
Moreover, by Lemma~\ref{notrational}, if $Z' \subset B'$ is a prime divisor not defined
over $k$, then for $f\in k(B)$ we have $v_{Z'}(f)=0$; the same holds for the strict
transform of $Z'$ on $X'$. Thus only those prime divisors of $B'$ coming from base
change of a divisor $Z\subset B$ defined over $k$ can contribute. 
In particular,
\begin{align}
\begin{split}
h_{k''(B'')}(x)
&= -\sum_{\substack{Z \subset B\\ \text{horizontal prime over $k$}}}
   [k''(p_{\widetilde{Z}}):k''] \min\{v_{\widetilde{Z}}(f_0),\ldots,v_{Z'}(f_n)\},\text{ and }\\
h_{k'(X')}(x) &= h_{k'(B')}(x)= -\sum_{\substack{Z \subset B \text{ prime }\\ \text{ over } k}}
    \deg_{\mathcal{M}}(Z)\,
    \min\{v_{Z \otimes_k k'}(f_0),\ldots,v_{Z \otimes_k k'}(f_n)\}.
\end{split}
\end{align}
It now follows from Lemma \ref{GublerGeneric2} that $h_{k''(B'')}(x)=h_{k'(B')}(x)$. The Proposition then follows since $k'(B')= k(B)\otimes_k k'$ and extensions of the field of constants don't affect heights. 
\qed

\begin{remark}\label{compare with XY}
 We note that a construction of a similar ``generic curve'' also appeared later in the work of Xie--Yuan on the geometric Bogomolov conjecture \cite{XY22}. There is, however, a difference between the two settings. In Gubler's approach, the generic curve is obtained after adjoining a transcendental extension $k''$ of the base field $k$ which is algebraically independent from $k(B)$. In contrast, Xie--Yuan seek a transcendental extension lying inside $k(B)$, since their arguments require avoiding any extension of the ground function field $K=k(B)$. 
Note that with Xie--Yuan's approach the height preservation of Proposition \ref{prop: generic} may fail for points in $K$; in general we get an inequality \cite[Lemma 3.7]{XY22}.
\end{remark}

\subsection{Proof of the dynamical quantitative fundamental inequality}

We let $Y$ denote a projective variety defined over the algebraic closure of $K_0$ and let $L$ be a very ample line bundle on $Y$. 
For $d\ge 2$, we say that the pair $(Y,L)$ is \textbf{$d$-good} if for any embedding $\iota: Y \xhookrightarrow{} \bb{P}^N$ given by a complete linear system of $L$, and for all $n\ge 0$, the map  $H^0(\bb{P}^N,O(n)) \xrightarrow{\iota^{*}} H^0(Y,L^{\otimes n})$
is surjective, and the image $\iota(Y)$ is cut out set-theoretically by equations of degree $\leq d$.

Note that replacing $L$ by a power we can always ensure that the pair $(Y,L)$ is $d$-good \cite[Corollary 2.2]{Fak03}, \cite[Theorem 1, Theorem 3]{Mum70}.
Recall our notations from \S\ref{sec: heights} and \S\ref{canonical heights}. 
Our aim is to prove the following result and then deduce Theorem \ref{quantitative Zhang}.

\begin{theorem}[Quantitative fundamental inequality]\label{quantitative Zhang2}
Let $K_0$ be either $\bQ$ or a function field. 
Let $(\phi, Y, L)$ be a degree $d\ge 2$ polarized dynamical system over $\overline{K_0}$ and assume that the pair $(Y,L)$ is $d$-good. 
Fix an embedding $\iota: Y\hookrightarrow \bP^N$ given by a complete linear system of $L$ and let 
\begin{align*}
C(\phi,\iota) =
\begin{cases}
\displaystyle 
\sup_{x\in Y(\overline{K_0})} \bigl| \hat{h}_{\phi}(x) - h(x) \bigr|
 \text{ \hspace{30mm}    , if } Y \subseteq \bb{P}^N \text{ and }(Y,L,\iota) = (Y, O(1), \mathrm{id}) \\[1.2em]
\displaystyle 
\sup_{x\in Y(\overline{K_0})} \bigl| \hat{h}_{\phi,L}(x) - h(\iota(x)) \bigr|
+ 2h((N+1)!) + 2h(N+1)
 \text{\hspace{2mm}, otherwise.}
\end{cases}
\end{align*}
If $X'\subset Y$ is an irreducible subvariety, then the set 
\begin{align*}
\left\{x\in X'(\overline{K_0})~:~\hat{h}_{\phi,L}(x)<\frac{1}{2(\dim X'+1)(4e)^{\dim X'+1}}\hat{h}_{\phi,L}(X')\right\}
\end{align*}
is contained in a proper subvariety $Z\subsetneq Y$ with $\deg_LZ$ at most 
$$3N (\dim X'+1) (\deg_L X')^2 \left(\frac{(2+6h(N+1))(\dim X'+1)^2(4e)^{\dim X'+1}(C(\phi, \iota) + \frac{7}{2} h(N+1))}{\hat{h}_{\phi,L}(X')}+1\right)^{N+1}.$$
\end{theorem}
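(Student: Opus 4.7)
The plan is to reduce Theorem~\ref{quantitative Zhang2} to the Weil-height version, Theorem~\ref{David--Philippon--mult}, applied to $X \coloneqq \iota(X') \subset \bP^N$. Since the latter is stated only for $K_0 = \bQ$ or a function field of a curve, I would first invoke Proposition~\ref{prop: generic} to reduce to this case: standard heights and degrees are preserved under Gubler's generic curve construction, and $\hat{h}_{\phi,L}$ is defined in Definition/Proposition~\ref{canonical height def/prop} as a limit of standard heights of iterates in $\bP^N$, so the full statement descends to the transcendence degree $\leq 1$ setting.

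With $r \coloneqq \dim X'$ and $\deg X = \deg_L X'$, the next step is to translate the canonical-height hypothesis on $X'$ into a Philippon-height hypothesis on $X$. Combining Definition/Proposition~\ref{canonical height def/prop}(2)--(3), which relates $\hat{h}_{\phi,L}(X')$ with $\lim_n d^{-n}\hat{h}_{\mathrm{Ph}}(\iota(\phi^n(X')))$, with the comparison \eqref{nf heights compare} between $\hat{h}_{\mathrm{Ph}}$ and $\hat{h}_{\mathrm{st}}$ and the constant $C(\phi,\iota)$ that controls $|\hat{h}_{\phi,L}(x) - h(\iota(x))|$ pointwise, I would obtain a lower bound on $\hat{h}_{\mathrm{Ph}}(X)$ in terms of $\hat{h}_{\phi,L}(X')$ and transmute ``small canonical height on $X'$'' into ``small Philippon height on $X$'' up to an additive error. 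The extra factor $2(r+1)$ in the threshold $\frac{1}{2(r+1)(4e)^{r+1}}\hat{h}_{\phi,L}(X')$ (as opposed to $(4e)^{-(r+1)}\hat{h}_{\mathrm{Ph}}(X)$ in Theorem~\ref{David--Philippon--mult}) precisely absorbs this translation, via the essential-minimum argument in Proposition~\ref{height and sup}.

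Because $(Y,L)$ is $d$-good, the ambient $Y \subset \bP^N$ is cut out set-theoretically by equations of degree at most $d$, and from this one verifies that the prime ideal $I(X) \subset \overline{K_0}[x_0,\ldots,x_N]$ is $D$-nice (in the sense of Definition~\ref{Dnice}) for some $D$ bounded explicitly by $d$ and $\deg_L X'$. Choosing $\delta \geq 2D + r + 2$ large enough so that the Philippon-height hypothesis of Theorem~\ref{David--Philippon--mult} is satisfied, I would obtain a homogeneous form $q$ of degree $\delta$, not vanishing on $X$, whose zero locus contains every point of $X(\overline{K_0})$ of small Philippon height. Then $Z \coloneqq V(q) \cap X$ is a proper subvariety of $X$ (hence of $Y$, since if $X = Y$ the form $q$ does not vanish on $Y$, while if $X \subsetneq Y$ the containment $Z \subset X$ is already proper in $Y$), containing the image under $\iota$ of every point of $X'$ with small canonical height.

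The main obstacle will be the explicit degree bound on $Z$. A direct B\'ezout estimate yields only $\deg_L Z \leq \delta \cdot \deg_L X'$, which exhibits one power of $\deg_L X'$ rather than the $(\deg_L X')^2$ in the statement, and no obvious exponent $N+1$. Closing this gap requires either unwinding the Chow-form construction of \cite{DP99}, in which one produces a small section in the Pl\"ucker coordinates of the Chow variety whose combined zero locus gives the sharper $(\deg_L X')^2 \cdot \delta^{N+1}$ type bound, or verifying that for the admissible range of parameters the looser quantity in the statement already dominates the tighter B\'ezout bound. The delicate bookkeeping needed to thread together the $D$-niceness constants from Proposition~\ref{ArithHilbertBound}, the Siegel-type bounds from Corollary~\ref{DPh:SmallLinear}, and the height-translation constants $C(\phi,\iota) + \frac{7}{2}h(N+1)$, and to see how the factor $N$ enters through the codimension of the embedding, is the technical heart of the argument.
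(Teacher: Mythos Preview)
Your proposal has the right large-scale architecture (reduce to $\bP^N$ via the $d$-good hypothesis, descend to transcendence degree $\le 1$ via the generic curve, invoke Theorem~\ref{David--Philippon--mult}), but it misses the one idea that actually makes the argument work: \emph{iteration}. The paper does not apply Theorem~\ref{David--Philippon--mult} to $X=\iota(X')$ itself; it applies it to $\Phi^m(X)$ for a carefully chosen $m$ (Lemma~\ref{choice of m}), obtains a form $q$ cutting out $Z\subsetneq \Phi^m(X)$, and then \emph{pulls back} via $\Phi^{-m}$ to trap the small points of $X$.

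Without this step your argument breaks in two places. First, the Philippon-height hypothesis of Theorem~\ref{David--Philippon--mult} cannot be arranged by ``choosing $\delta$ large enough'': over a number field the lower bound $(\dim X+1)(4e)^{\dim X+1}\cdot 6\log(N+1)$ persists for all $\delta$, and there is no reason $\hat h_{\mathrm{Ph}}(X)$ exceeds it. Iterating fixes this because $\hat h_{\Phi}(\Phi^m(X))=d^m\hat h_{\Phi}(X)$ grows while the gap $|\hat h_{\Phi}-\hat h_{\mathrm{Ph}}|$ stays bounded by $C(\phi,\iota)+c_0(N)$, so the hypothesis is met once $d^m$ dominates $(C(\phi,\iota)+c_0(N))/\hat h_{\Phi}(X)$. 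Second, the degree bound you are puzzled by is \emph{produced} by the pullback, not hidden in a Chow-form computation: $\deg\Phi^{-m}(Z)=d^{m(N-\dim Z)}\deg Z$, and combining $\deg Z\le \delta_m\deg\Phi^m(X)$ with $\deg\Phi^m(X)\le d^{m\dim X}\deg X$ and $\delta_m\le 3N(\dim X+1)\deg X$ (Lemma~\ref{nice iterate}) yields exactly the $(\deg_L X')^2$ and the factor $d^{m(N+1)}\sim\bigl((C(\phi,\iota)+c_0(N))/\hat h_{\Phi}(X)\bigr)^{N+1}$. Your B\'ezout speculation would at best give $\delta\cdot\deg X$, with no mechanism to make the ratio appear.

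A smaller point: the role of the $d$-good hypothesis is primarily to extend $\phi$ to an endomorphism $\Phi$ of $\bP^N$ via Fakhruddin (Proposition~\ref{reduction_to_pn}), after which one can iterate on $\bP^N$; the $D$-niceness of the iterates then follows from \cite[Prop.~4.2,~4.5]{DP99} applied to $\Phi^m$, not directly from equations of $Y$.
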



Before we begin the proof, we state a version of our theorem in the case of number fields or function fields of curves. We will reduce Theorem \ref{quantitative Zhang2} to the following result using the construction of the generic curve from \S\ref{sec: generic}.

\begin{theorem}\label{quantitative Zhang3}
Let $K$ be either $\bQ$ or the function field of a curve. Let $\Phi: \bb{P}^N \to \bb{P}^N$ be an endomorphism defined over $\ovl{K}$, let $X$ be a subvariety of $\bb{P}^N$ and let $K_1$ be a subfield of $\ovl{K}$ such that $\Phi$ and $X$ are defined over $K_1$.    Let $c(\Phi,K_1)$ be a constant such that:
\begin{enumerate}
\item $\hat{h}_{\Phi,K}(\Phi^m(X))\;\le\;(\dim X+1)\left(\hat{h}_K(\Phi^m(X))
 + c(\Phi,K_1) \right)$ for all $m \geq 0$,
\item $\sup_{x \in \Phi^m(X)(K_1)} |\hat{h}_{\Phi,K}(x) - \hat{h}_K(x)| \leq c(\Phi,K_1)$ for all $m \geq 0$.
\end{enumerate}
Then the set 
\begin{align*}
\left\{x\in X(K_1) ~:~\hat{h}_{\Phi,K}(x)<\frac{1}{2(\dim Y+1)(4e)^{\dim Y+1}}\hat{h}_{\Phi,K}(Y)\right\}
\end{align*}
is contained in a proper subvariety $Z\subsetneq X$ with $\deg Z$ at most 
$$3N (\dim X+1) (\deg X)^2 \left(\frac{(2+6h_K(N+1))(\dim X+1)^2(4e)^{\dim X+1}(c(\Phi,K_1) + \frac{7}{2} h_K(N+1))}{\hat{h}_{\Phi,K}(X)}+1\right)^{N+1}.$$
\end{theorem}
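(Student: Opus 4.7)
The strategy is to reduce to the Weil-height version, Theorem \ref{David--Philippon--mult}, by applying it to a suitable iterate. If $\hat h_{\Phi,K}(X)=0$ the set of small points is empty (canonical heights being non-negative), so take $Z=\emptyset$; assume henceforth $\hat h_{\Phi,K}(X)>0$. The key observation is that canonical heights scale multiplicatively under iteration, $\hat h_{\Phi,K}(X_m)=d^m\,\hat h_{\Phi,K}(X)$ for $X_m:=\Phi^m(X)$, while the additive error terms from hypotheses (1), (2) and the Philippon--standard comparison \eqref{nf heights compare} remain bounded in $m$. This suggests choosing $m\ge 0$ large enough to dwarf those additive terms.

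Concretely, I would let $m$ be the smallest non-negative integer with $d^m\ge R$, where $R$ is a suitable constant multiple of
\[
\frac{(\dim X+1)^2(4e)^{\dim X+1}\bigl(c(\Phi,K_1)+h_K(N+1)\bigr)}{\hat h_{\Phi,K}(X)},
\]
so that $d^m\le d(R+1)$. Applying hypothesis (1) to $X_m$ yields $\hat h_K(X_m)\ge d^m\hat h_{\Phi,K}(X)/(\dim X+1)-c(\Phi,K_1)$, and then \eqref{nf heights compare} provides a matching lower bound for $\hat h_{\mathrm{Ph}}(X_m)$ that satisfies the size hypothesis of Theorem \ref{David--Philippon--mult} for any sufficiently small $\epsilon>0$. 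Since a subvariety of $\mathbb{P}^N$ of degree $e$ is $e$-nice, Theorem \ref{David--Philippon--mult} applies to $X_m$ with $\delta:=2\deg X_m+\dim X+2$, producing a homogeneous form $q_m$ of degree $\delta$ not vanishing on $X_m$ whose zero locus contains every $y\in X_m(\overline K)$ with $h_{\mathrm{Ph}}(y)\le (4e)^{-(\dim X+1)}\hat h_{\mathrm{Ph}}(X_m)$.

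Next, for $x\in X(K_1)$ with $\hat h_{\Phi,K}(x)<\tfrac{1}{2(\dim X+1)(4e)^{\dim X+1}}\hat h_{\Phi,K}(X)$, the image $\Phi^m(x)\in\Phi^m(X)(K_1)$ satisfies $\hat h_{\Phi,K}(\Phi^m(x))=d^m\hat h_{\Phi,K}(x)$; applying hypothesis (2) together with \eqref{nf heights compare} for points produces an explicit upper bound on $h_{\mathrm{Ph}}(\Phi^m(x))$. The choice of $m$ is engineered so that this bound is at most $(4e)^{-(\dim X+1)}\hat h_{\mathrm{Ph}}(X_m)$, forcing $\Phi^m(x)\in Z(q_m)\cap X_m$ and hence $x\in Z:=Z(q_m\circ\Phi^m)\cap X$. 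Since $q_m$ does not vanish identically on $X_m$, neither does $q_m\circ\Phi^m$ on $X$, so $Z\subsetneq X$. By Bezout,
\[
\deg Z\le \delta\cdot d^m\cdot\deg X\le \bigl(2\,d^{m\dim X}\deg X+\dim X+2\bigr)d^m\deg X,
\]
and combining $d^m\le d(R+1)$ with $\dim X\le N$ yields a bound of the shape claimed.

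The main obstacle is bookkeeping: a single choice of $m$ must simultaneously validate the David--Philippon lower bound on $\hat h_{\mathrm{Ph}}(X_m)$, ensure the Philippon-height upper bound on $\Phi^m(x)$ falls under the David--Philippon threshold, and yield precisely the stated constants $(2+6h_K(N+1))$, $(c(\Phi,K_1)+\tfrac{7}{2}h_K(N+1))$, $(\dim X+1)^2$, and $(4e)^{\dim X+1}$. Careful tracking of the additive shifts arising from the Philippon--standard comparison on subvarieties (which contributes a term of order $\tfrac{7}{2}h_K(N+1)(\dim X+1)$) together with David--Philippon's intrinsic constant $6h_K(N+1)$ should produce the claimed explicit expression.
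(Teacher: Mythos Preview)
Your strategy matches the paper's: choose $m$ so that $d^m\hat h_{\Phi}(X)$ dominates the additive error terms, apply Theorem~\ref{David--Philippon--mult} to $\Phi^m(X)$, then pull back via $q_m\circ\Phi^m$. There is, however, a technical gap: the claim that a degree-$e$ subvariety of $\bP^N$ is $e$-nice is not correct. By \cite[Prop.~4.2]{DP99} (see the proof of Lemma~\ref{nice iterate} at $m=0$), a degree-$e$ subvariety of dimension $r$ is only $((N-r)(e-1)+r+1)$-nice, so your $\delta$ must be taken on the order of $2N\deg X_m$; with $\delta=2\deg X_m+\dim X+2$ the hypothesis of Theorem~\ref{David--Philippon--mult} is not met and the form $q_m$ is not guaranteed to exist. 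This is easily repaired and does not alter the shape of the final bound.

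The paper refines this step via Lemma~\ref{nice iterate}: because $X$ is an isolated component of a complete intersection of $(N-\dim X)$ hypersurfaces of degree $\le\deg X$, \cite[Prop.~4.5]{DP99} propagates niceness through $\Phi^m$ to show that $\Phi^m(X)$ is $D_m$-nice with $D_m=\lfloor(N-\dim X)(\deg X-1)/d^m\rfloor+\dim X+1$, which is \emph{bounded uniformly in $m$}. Hence the paper's $\delta_m\le 3N(\dim X+1)\deg X$ is independent of $m$, whereas your (corrected) $\delta$ grows like $N d^{m\dim X}\deg X$. Both routes yield a bound of the stated form after substituting $d^m$ by the ratio $R$ (since $\dim X+1\le N+1$), but the bounded $\delta_m$ is what makes the clean prefactor $3N(\dim X+1)$ appear and is the main structural point you are missing.
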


We explain why Theorem \ref{quantitative Zhang2} reduces to Theorem \ref{quantitative Zhang3} in Proposition \ref{GenericCurve1}. 
First we reduce the general case to endomorphisms of $\bP^N$. 

\begin{proposition}\label{reduction_to_pn}
    It suffices to prove Theorem \ref{quantitative Zhang2} in the case $Y=\bP^N$, $L = O(1)$ and $\iota:\bP^N\to \bP^N$ the identity map. 
\end{proposition}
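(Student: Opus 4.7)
The plan is to lift $\phi$ to an endomorphism of $\mathbb{P}^N$ via the $d$-good hypothesis, apply the special case of the theorem there, and pull the exceptional subvariety back to $Y$ via $\iota$. Since $(Y,L)$ is $d$-good and $\iota$ is induced by a complete linear system of $L$, Fakhruddin's extension theorem \cite[Corollary~2.2]{Fak03} produces an endomorphism $\Phi:\mathbb{P}^N\to\mathbb{P}^N$ of degree $d$ satisfying $\Phi\circ\iota=\iota\circ\phi$; this is precisely the extension that was already invoked in the proof of Definition/Proposition~\ref{canonical height def/prop}. I set $X:=\iota(X')\subset\mathbb{P}^N$, noting that $\deg_L X'=\deg_{O(1)} X$ since $\iota^\ast O(1)=L$.

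Next, I invoke functoriality of the invariant adelic metric. Because $\iota^\ast\overline{O(1)}_\Phi=\overline{L}_\phi$, Definition/Proposition~\ref{canonical height def/prop}(3) yields $\hat h_{\phi,L}(x)=\hat h_\Phi(\iota(x))$ for every $x\in Y(\overline{K_0})$, and the same equality on positive-dimensional subvarieties gives $\hat h_{\phi,L}(X')=\hat h_\Phi(X)$. In particular, $\iota$ induces a bijection between the set of $\hat h_{\phi,L}$-small points of $X'$ and the set of $\hat h_\Phi$-small points of $X$ with the same height threshold. Applying the special case of Theorem~\ref{quantitative Zhang2} to $(\Phi,\mathbb{P}^N,O(1),\mathrm{id})$ and $X$ produces a proper subvariety $Z_0\subsetneq\mathbb{P}^N$ containing all $\hat h_\Phi$-small points of $X$, with $\deg Z_0$ bounded by the claimed expression evaluated with the constant $C(\Phi,\mathrm{id})$. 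Set $Z:=\iota^{-1}(Z_0)\subsetneq Y$: this is a proper subvariety of $Y$ that, by the bijection above, contains all $\hat h_{\phi,L}$-small points of $X'$, and the relation $\iota^\ast O(1)=L$ combined with Bezout yields $\deg_L Z\le \deg_{O(1)}Z_0$.

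The main—and only nontrivial—step is to verify that $C(\Phi,\mathrm{id})\le C(\phi,\iota)$, so that the bound obtained for the special case implies the bound claimed in the general case. For $y=\iota(x)\in\iota(Y)$, the functoriality from Step~2 gives $\hat h_\Phi(\iota(x))-h(\iota(x))=\hat h_{\phi,L}(x)-h(\iota(x))$, so the restriction of the supremum defining $C(\Phi,\mathrm{id})$ to $\iota(Y)$ agrees exactly with the supremum appearing in $C(\phi,\iota)$. The excess in passing from $\iota(Y)$ to all of $\mathbb{P}^N$ is controlled by standard arithmetic Bezout–type estimates for the Fakhruddin lift, together with the comparison \eqref{nf heights compare} between Chow-form and intersection-theoretic heights (whose discrepancy is of size $h(N+1)$) and the analogous comparison for Weil heights on the affine charts of $\mathbb{P}^N$ (whose discrepancy is of size $h((N+1)!)$ coming from determinants of change-of-coordinate matrices). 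The additive correction $2h((N+1)!)+2h(N+1)$ in the definition of $C(\phi,\iota)$ is set up precisely to absorb this excess, so no new ideas beyond the height-comparison results collected in \S\ref{canonical heights} and in Proposition~\ref{height diff} are required to close the reduction.
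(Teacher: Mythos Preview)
Your proof has a genuine gap stemming from a misreading of Fakhruddin's result. Corollary 2.2 of \cite{Fak03} asserts the existence of \emph{some} embedding $\iota_0:Y\hookrightarrow\mathbb{P}^N$ for which $\phi$ extends to an endomorphism of $\mathbb{P}^N$; it does \emph{not} guarantee that the \emph{given} embedding $\iota$ admits such an extension. This is precisely why the paper invokes \cite[Proposition~2.1]{Fak03} instead: one must first compose $\iota$ with a general linear automorphism $M\in\GL_{N+1}(\overline{K_0})$ (which can be chosen with root-of-unity entries, hence $h(M)=0$) so that $M\circ\iota$ does admit an extension $\Phi$. The additive term $2h((N+1)!)+2h(N+1)$ in the definition of $C(\phi,\iota)$ is exactly the cost of comparing $h(M\iota(x))$ with $h(\iota(x))$ via Cramer's rule, not a correction for ``passing from $\iota(Y)$ to all of $\mathbb{P}^N$'' as you write.

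Consequently, your attempt to bound $C(\Phi,\mathrm{id})=\sup_{x\in\mathbb{P}^N}|\hat h_\Phi(x)-h(x)|$ by $C(\phi,\iota)$ fails: the behavior of the Fakhruddin lift $\Phi$ on $\mathbb{P}^N\setminus M\iota(Y)$ is in no way controlled by $(\phi,\iota)$, and there is no ``arithmetic B\'ezout-type estimate'' that bounds $h(\Phi)$ (and hence $C(\Phi,\mathrm{id})$ via Proposition~\ref{height diff}) in terms of the data on $Y$. The paper's argument sidesteps this entirely: one has $\hat h_{\phi,L}(x)=\hat h_\Phi(M\iota(x))$ and $|h(M\iota(x))-h(\iota(x))|\le 2h((N+1)!)+2h(N+1)$, so the supremum over $M\iota(Y)$ is bounded by $C(\phi,\iota)$. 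Since all iterates $\Phi^m(M\iota(X'))$ remain inside $M\iota(Y)$, this restricted bound is all that is actually used downstream (cf.\ conditions~(1)--(2) in Theorem~\ref{quantitative Zhang3}). You never need---and cannot obtain---control on all of $\mathbb{P}^N$.
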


\begin{proof}
Let $(\phi,Y,L)$ be a degree $d$ polarized dynamical system as in the statement of the theorem. 
Let $\iota: Y \hookrightarrow \bP^N$ be an embedding given by a complete linear system of $L$. Since $(Y,L)$ is $d$-good, following \cite[Proposition  2.1]{Fak03} we see that there is an automorphism $M$ of $\GL_{N+1}(\ovl{K_0})$ with coordinates roots of unity (so general), and degree $d$ homogeneous map $\Phi: \bP^N\to \bP^N$ with $\Phi\circ M\circ\iota= M\circ \iota\circ \phi$. 
In particular, for all subvarieties $X'$ of $Y$ we have 
\begin{align}\label{heights equal}
\hat{h}_{\phi,L}(X')=\hat{h}_{\Phi}(M\circ\iota(X')).
\end{align}
Moreover, for each $x\in Y(\Kbar)$ we have 
\begin{align}\label{height iso}
|h(M(\iota(x)))-h(\iota(x))|\le h(M)+h(M^{-1})+h(N+1)\le 2h((N+1)!)+ 2h(N+1),
\end{align}
where in the last inequality we used that the height of the projective vector given by the entries of $M$ is $0$ and Cramer's rule. Note that expanding the determinant we have $h(\det M)\le h((N+1)!)$. 
The proposition then easily follows by \eqref{heights equal} and \eqref{height iso}.  
\end{proof}

\begin{proposition} \label{GenericCurve1}
Theorem \ref{quantitative Zhang3} implies Theorem \ref{quantitative Zhang2}.
\end{proposition}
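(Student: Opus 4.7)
The plan proceeds in two stages. First, I would invoke Proposition~\ref{reduction_to_pn} to reduce to the case $Y=\mathbb{P}^N$, $L=O(1)$, $\iota=\mathrm{id}$, so that we are working with an endomorphism $\Phi:\mathbb{P}^N\to\mathbb{P}^N$ of degree $d$ and a subvariety $X'\subset\mathbb{P}^N$, both defined over $\overline{K_0}$. Second, if $K_0=\mathbb{Q}$ or $K_0$ is the function field of a curve, Theorem~\ref{quantitative Zhang3} applies directly with $K=K_0$ and $K_1=\overline{K_0}$. The substantive content of the reduction is therefore the remaining case, where $K_0=k(B)$ with $\dim B\ge 2$.

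In that case, the idea is to appeal to Gubler's generic curve construction of \S\ref{sec: generic}, which produces a smooth projective curve $B''$ over a field $k''$ such that $K:=k''(B'')$ is a function field of a curve containing $K_0$ as a subfield. Both $\Phi$ and $X'$ remain defined over $\overline{K_0}\subseteq\overline{K}$. The goal is to apply Theorem~\ref{quantitative Zhang3} to $(\Phi,X')$ over $K$ with $K_1=\overline{K_0}$, and then translate the resulting conclusion back to $K_0$. The essential input for this translation is Proposition~\ref{prop: generic}: for every $x\in\mathbb{P}^N(\overline{K_0})$ and every irreducible subvariety $V\subseteq\mathbb{P}^N_{\overline{K_0}}$ we have
\[
h_K(x)=h_{K_0}(x), \qquad h_K(V)=h_{K_0}(V), \qquad \hat{h}_{\Phi,K}(V)=\hat{h}_{\Phi,K_0}(V),
\]
where the second equality is the Weil height of the Chow form and the third follows from the limit characterization in Definition/Proposition~\ref{canonical height def/prop}(2).

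With height preservation established, the hypotheses of Theorem~\ref{quantitative Zhang3} over $K$ can be checked with a constant $c(\Phi,K_1)$ controlled by $C(\phi,\iota)$. Hypothesis (2) is immediate from the definition of $C(\phi,\iota)$ in the case $(Y,L,\iota)=(\mathbb{P}^N,O(1),\mathrm{id})$. Hypothesis (1) follows from Proposition~\ref{height and sup}, which gives exactly the required bound with $\hat{h}_{\mathrm{st},\mathbb{P}^N}$ in place of the Weil height $h_K$; the comparison~\eqref{nf heights compare} shows that the discrepancy costs at most an additive $\frac{7}{2}h(N+1)$ term, which is precisely the extra summand already built into the degree bound of Theorem~\ref{quantitative Zhang3}. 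Applying that theorem produces a strict subvariety $Z\subsetneq X'$ with the claimed degree bound; since $\deg Z$ is a geometric invariant and $\hat{h}_{\Phi,K}(X')=\hat{h}_{\Phi,K_0}(X')$, this bound transfers verbatim to the statement of Theorem~\ref{quantitative Zhang2}.

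The main technical obstacle I anticipate is that Proposition~\ref{prop: generic} is stated for subvarieties defined over $k(B)$, whereas we need height preservation for subvarieties and points defined over the algebraic closure $\overline{K_0}$. I would address this by fixing a finite extension $L/K_0$ over which $X'$ and the relevant points are defined, carrying out the generic curve construction over the normalization of $B$ in $L$ to obtain a compatible extension $L'/K$, and then invoking Proposition~\ref{prop: generic} at this finite layer together with the standard compatibility of both Weil heights and canonical heights with finite extensions of the ground field. This step is routine in spirit but is the only ingredient genuinely going beyond the results already proved in the preceding sections.
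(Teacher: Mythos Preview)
Your overall strategy matches the paper's: reduce to $\mathbb{P}^N$ via Proposition~\ref{reduction_to_pn}, handle the base cases directly, and for higher-dimensional function fields pass to Gubler's generic curve and invoke Theorem~\ref{quantitative Zhang3} there with $K_1=\overline{K_0}$. Your verification of hypotheses (1) and (2) via height preservation (Proposition~\ref{prop: generic}) together with Proposition~\ref{height and sup} is also essentially what the paper does.

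However, there is a genuine gap you have overlooked. When you apply Theorem~\ref{quantitative Zhang3} over $K=k''(B'')$, the proper subvariety $Z\subsetneq X'$ it produces is a priori defined only over $\overline{k''(B'')}$, not over $\overline{K_0}$. Since $k''$ is a purely transcendental extension of $k$, this $Z$ does not live in $Y_{\overline{K_0}}$ at all, so the sentence ``this bound transfers verbatim to the statement of Theorem~\ref{quantitative Zhang2}'' does not hold as written. The paper explicitly flags this as ``the only remaining issue'' and resolves it by a spreading-out and specialization argument: one chooses a finitely generated $\overline{k(B)}$-algebra $R\subset\overline{k''(B'')}$ over which the defining equations of $Z$ live, spreads $Z$ to a closed subscheme $\mathcal{Z}\subset\mathbb{P}^N_R$, and then specializes at a point $u\in\operatorname{Spec}R$ lying over $\overline{k(B)}$. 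Generic flatness guarantees that $\deg\mathcal{Z}_u=\deg Z$ and $\dim\mathcal{Z}_u<\dim X$, and because every small point $x\in X(\overline{k(B)})$ satisfies the equations of $Z$ with coefficients in $R$, it also satisfies their specializations, so $x\in\mathcal{Z}_u$.

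By contrast, the obstacle you do identify---extending Proposition~\ref{prop: generic} from $k(B)$-points to $\overline{K_0}$-points---is handled in the paper simply by replacing $B$ with its normalization in a finite extension of $K_0$ over which $\Phi$ and $X$ are defined, so your proposed fix there is correct but this is not where the real difficulty lies.
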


\begin{proof}
As we have seen in Proposition \ref{reduction_to_pn} it suffices to consider endomorphisms of $\bP^N$. 
Let $K_0$, $\Phi:\bP^N\to \bP^N$ of degree $d\ge 2$ and $X\subset \bP^N$ be as in Theorem \ref{quantitative Zhang2}. Assume that Theorem \ref{quantitative Zhang3} holds. 

If $K_0$ is $\bQ$ or the function field of a curve, then by Proposition \ref{height and sup} we see that we may take $c(\Phi,\overline{K_0})$ in Theorem \ref{quantitative Zhang3} to be $C(\Phi,O(1),\mathrm{id})$. Choosing $K_1=\overline{K_0}$, Theorem \ref{quantitative Zhang2} follows in this case. 

Suppose now that $K_0$ has transcendence degree at least $2$ over $k$. We will use the construction of the generic curve from \S\ref{sec: generic} and use the notations for $k',k'',B'',B'$ therein. 
Suppose that $X$ and $\Phi$ are defined over the function field $k(B)$ or a regular projective variety over $k$ with dimension at least $2$ ($k(B)$ is a finite extension of $K_0$.) 
We will apply Proposition \ref{quantitative Zhang3} to 
$(\Phi \times_{k(B)} k''(B''))$, the subvariety $X \times_{k(B)} k'(B') \subset \bb{P}^N_{k''(B'')}$ and  $K_1 = \ovl{k(B)}\subset \overline{k''(B'')}$, noting that $B''$ is a curve over $k''$. 
Recall that from the height preservation of Proposition \ref{prop: generic}, since all the iterates of $X$ are defined over $k(B)$, we have
\begin{align}
\begin{split}
\hat{h}_{\Phi\times_{k(B)} k''(B''), k''(B'')}(\Phi^m(X)\times_{k(B)}k'(B')) &= \hat{h}_{\Phi, k(B)}(\Phi^m(X)),\\
\hat{h}_{k''(B'')}(\Phi^m(X)\times_{k(B)}k'(B'))&=\hat{h}_{k(B)}(\Phi^m(X)),~\text{ for all }m\in\bN\\
\hat{h}_{\Phi\times_{k(B)} k''(B''), k''(B'')}(x) &= \hat{h}_{\Phi, k(B)}(x), ~\text{ for all } x\in \bP^N(\overline{k(B)}).
\end{split}
\end{align}
In particular, by Proposition \ref{height and sup}, we may take $$c(\Phi,\overline{k(B)})=c(\Phi,\iota)=\sup_{x\in \bP^N(\overline{k(B)})}|\hat{h}_{\Phi,k(B)}(x)-h_{k(B)}(x)|.$$
We infer that there is a strict subvariety $Z \subsetneq X\times_{k(B)}k'(B')$, defined over $\ovl{k''(B'')}$, such that 
\begin{align}
\{x\in (X\times_{k(B)}k'(B'))(\ovl{k(B)})~:~\hat{h}_{\Phi,k(B)}(x)\leq \hat{h}_{\Phi,k(B)}(X)/2(4e)^{\dim X + 1}\}\subset Z,
\end{align}
and such that $\deg Z$ is bounded by an explicit constant given in Theorem \ref{quantitative Zhang2}. 

The only remaining issue is that $Z$ is defined
a priori over $\overline{k''(B'')}$ rather than over $\overline{k(B)}$. We now
show how to construct from $Z$ a proper subvariety of $X$ over $\overline{k(B)}$ that still
contains all points of $X(\overline{k(B)})$ with small $\Phi$-canonical height
and has the same degree as $Z$. 

Choose a finitely generated $\ovl{k(B)}$-algebra $R \subseteq \ovl{k''(B'')}$
containing all the coefficients of a fixed finite set of homogeneous polynomials
cutting out $Z$ in $\mathbb{P}^N_{\overline{k''(B'')}}$. 
Then $Z$ is defined over $\mathrm{Frac}{R}$, and there exists a closed subscheme
$\mathcal{Z} \subseteq \mathbb{P}^N_R$
whose generic fiber over $\Spec R$ is equal to $Z$. 
By generic flatness, after shrinking $\Spec R$ if necessary there is a nonempty
Zariski open subset $U \subseteq \Spec R$ such that the morphism
$\mathcal{Z}_U \to U$ is flat. 
For any point
$u : \Spec \overline{k(B)} \rightarrow U$
(i.e.\ for any $u \in U(\overline{k(B)})$), we denote by
$\mathcal{Z}_u := \mathcal{Z} \times_{\Spec R,u} \Spec \overline{k(B)}
\subseteq \mathbb{P}^N_{\overline{k(B)}}$
the corresponding fiber. Flatness implies that 
\[
\deg \mathcal{Z}_u = \deg Z \quad\text{for all }u\in U(\overline{K}).
\]
Moreover, since $Z$ is a strict subvariety of $X$, we have
$\dim Z < \dim X$, and flatness yields
\[
\dim \mathcal{Z}_u = \dim Z < \dim X
\]
for all $u\in U(\overline{K})$. Thus each $\mathcal{Z}_u$ is a proper closed
subvariety of $X_{\overline{k(B)}}$. 

It remains to show that $\cal{Z}_u(\Kbar)$ contains all points of
$X(\overline{K})$ with small $\Phi$--canonical height. Let $x \in X(\overline{k(B)})$ be such a point, so that by construction of $Z$ we have that it 
satisfies all homogeneous equations defining $Z$. 
Let $f_1,\ldots,f_r \in R[X_0,\dots,X_N]$ be homogeneous polynomials defining
$\mathcal{Z}$ in $\mathbb{P}^N_R$. Their images in
$\mathrm{Frac}(R)[X_0,\dots,X_N]$ define $Z$ on the generic fiber, so that
\[
f_j(x) = 0 \quad\text{in }\overline{k''(B'')} \qquad\text{, for all }j=1,\dots,r.
\]
These equalities hold in the fraction field $\mathrm{Frac}(R)$, and since the
coefficients of the $f_j$ lie in $R$, they already hold on $R$.
Composing the structure map $R \to \overline{k''(B'')}$ with the given
homomorphism $u^\#: R \to \overline{k(B)}$ corresponding to
$u\in U(\overline{k(B)})$ and specializing the coefficients of the $f_j$ along
$u^\#$ gives homogeneous polynomials $f_{j,u}$ over $\overline{k(B)}$ defining
$\mathcal{Z}_u$. The vanishing $f_j(x)=0$ in $\mathrm{Frac}(R)$ implies
$f_{j,u}(x)=0$ in $\overline{k(B)}$ for all $j$. Thus $x$ lies in
$\mathcal{Z}_u(\overline{k(B)})$. 
This finishes the proof.
\end{proof}

\medskip 
\noindent\emph{Proof of Theorems \ref{quantitative Zhang2}, \ref{quantitative Zhang3} and Theorem \ref{quantitative Zhang}: }
First notice that Theorem \ref{quantitative Zhang} follows immediately from Theorem \ref{quantitative Zhang2} in view of Proposition \ref{height diff} which bounds $c(\Phi,\iota)\le c_1(N,d) h(\Phi)+c_2(N,d)$. Moreover, we have seen in Proposition \ref{GublerGeneric1} that to prove Theorem \ref{quantitative Zhang2} it suffices to prove Theorem \ref{quantitative Zhang3}. Thus in what follows we work in the setting of Theorem \ref{quantitative Zhang3}. 

Let $K$ be either $\bQ$ or the function field of a regular projective curve over $k$. 
We fix an endomorphism $\Phi:\mathbb{P}_{\overline{K}}^N\to \mathbb{P}_{\overline{K}}^N$ given by degree $d\ge2$ homogeneous polynomials, as well as a subvariety $X\subset \mathbb{P}_{\overline{K}}$. 
We let $K_1$ be a subfield of $\Kbar$ and let $c(\Phi,K_1)$ denote a constant as in the statement of the Theorem. Throughout the heights are taken over $K$ and we omit it from the notation to simplify the presentation.

Let $I_X\subset \overline{K}[x_0,\ldots,x_n]$ be the homogeneous ideal defining $X$; so that $X=Z(I_X)$. Similarly, for each $m\in\bN$ we let $I_{\Phi^m(X)}$ be the defining homogeneous ideal for $\Phi^m(X)$. 
We wish to apply Proposition \ref{small section} to an appropriate iterate of $X$. 
To this end, we first we record the following lemma. 

\begin{lemma}\label{nice iterate}
For $m\in\bN$ the ideal $I_{\Phi^m(X)}$ is $D_m$-nice for 
$$D_m=\left[ (N-\dim X) \frac{\deg(X)-1}{d^m}   \right]+\dim X+1.$$ 
\end{lemma}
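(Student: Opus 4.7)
My plan is to reduce $D_m$-niceness of $I_{\Phi^m(X)}$ to the base niceness of $I_X$ via pullback by $\psi:=\Phi^m$, exploiting the $d^m$-scaling of pulled-back hypersurface degrees to extract the $1/d^m$ improvement in the formula for $D_m$. Writing $s:=\dim X$, $e:=\deg X$, and letting $r$ denote the rank parameter from Definition~\ref{Dnice} (so $r=N-s$ for the ideals at hand), I would first establish the base case $m=0$: the ideal $I_X$ of an irreducible subvariety of dimension $s$ and degree $e$ in $\bb{P}^N$ is $D_0$-nice with $D_0=(N-s)(e-1)+s+1$. The witness is the Chow form $f_{I_X[\mathbf{d}]}$ itself, which lies in the colon $(I_X[\mathbf{d}]:A_{D_0+|\mathbf{d}|-r+1})$ by standard Castelnuovo--Mumford regularity bounds combined with generic transversality of the $r$ hypersurfaces $U_{i,d_i}$.

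Given $\mathbf{d}\in\bN^r$ and setting $\delta:=D_m+|\mathbf{d}|-r+1$, I would apply the base case to $I_X$ at the scaled parameter $\mathbf{d}'':=(d^m d_1,\ldots,d^m d_r)$, producing a nonzero witness $g'(v):=f_{I_X[\mathbf{d}'']}$ with $g'\cdot A_{D_0+d^m|\mathbf{d}|-r+1}\subseteq I_X[\mathbf{d}'']$. Specializing the generic coefficients $v$ of the degree-$d^m d_i$ hypersurfaces $U'_{i,d^m d_i}$ to those of $\psi^* U_{i,d_i}$---a linear map in the original $u$-coordinates---produces $g(u)\in\overline{K_0}[u]$ satisfying
\[
g(u)\cdot A_{d^m\delta}\;\subseteq\;I_X+(\psi^* U_{1,d_1},\ldots,\psi^* U_{r,d_r}),
\]
where the degree balance $d^m\delta\geq D_0+d^m|\mathbf{d}|-r+1$ follows from $D_m=\lfloor(N-s)(e-1)/d^m\rfloor+s+1$ together with the floor inequality $d^m\lfloor x\rfloor\geq d^mx-d^m+1$ applied to $x=(N-s)(e-1)/d^m$. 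Nonvanishing of $g$ is automatic from genericity: $f_{I_X[\mathbf{d}'']}(\psi^* U_1,\ldots,\psi^* U_r)$ vanishes iff $\psi^{-1}(H_1)\cap\cdots\cap\psi^{-1}(H_r)\cap X\neq\emptyset$, equivalently iff the hypersurfaces $H_i=Z(U_{i,d_i})$ jointly meet $\Phi^m(X)$, which is prevented for generic $u$ by a dimension count.

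The principal obstacle is the \emph{descent}: converting the inclusion at the scaled degree $d^m\delta$ into the target inclusion $g(u)\cdot A_\delta\subseteq I_{\Phi^m(X)}[\mathbf{d}]$ at the original degree $\delta$. The natural strategy is to test against $\psi^*(p)$ for $p\in A_\delta$, obtaining $g\cdot\psi^*(p)\in I_X+(\psi^* U)$, and to descend by injectivity of the ring homomorphism $\psi^*\colon A\to A$. The subtlety is that this requires the $I_X$-component of the decomposition to factor through $\psi^*(I_{\Phi^m(X)})\subsetneq I_X$---not automatic, because $\psi^{-1}(\Phi^m(X))$ may carry components beyond $X$. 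I expect this to follow from the structural identity $f_{I_X[\mathbf{d}'']}(\psi^* U_1,\ldots,\psi^* U_r)=c\cdot f_{I_{\Phi^m(X)}[\mathbf{d}]}^{k}$ (up to a scalar, with both forms vanishing on the same locus in $u$-parameter space and the exponent $k$ pinned down by multidegree comparison). This identity identifies $g(u)$ as a power of the Chow form of $\Phi^m(X)$, placing it directly in $\mathfrak{C}_{\mathbf{d}}(I_{\Phi^m(X)})$ and thereby witnessing $D_m$-niceness at the required degree; the floor in the formula for $D_m$ precisely records the integer rounding in the relevant degree balance.
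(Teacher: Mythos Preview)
Your overall strategy---transporting a niceness witness along $\psi=\Phi^m$ and exploiting the $d^m$-scaling of pulled-back hypersurface degrees---is in the same spirit as the paper's, but the paper routes the argument through a \emph{complete intersection} $Z\supseteq X$ rather than through $X$ itself, and this is not cosmetic: it is precisely what makes the degree control in the pushforward go through. Following \cite[Lemma~5.1]{DP99}, the paper writes $X$ as an isolated component of a complete intersection $Z$ of $N-\dim X$ hypersurfaces of degree $\le\deg X$. By \cite[Prop.~4.2(i)]{DP99}, $Z$ is \emph{supersympa} of degree $(N-\dim X)(\deg X-1)$---a condition strictly stronger than $D$-nice. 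Then \cite[Prop.~4.5]{DP99}, applied to the algebra endomorphism induced by $\Phi^m$, gives directly that the ideal of $\Phi^m(Z)$ is $D_m$-nice; the explicit generators of a complete intersection are what allow the degree bookkeeping here. Finally \cite[Prop.~4.2(iii)]{DP99} transfers $D_m$-niceness from $\Phi^m(Z)$ to its isolated component $\Phi^m(X)$.

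Your descent step has a genuine gap that your proposed structural identity does not close. Even granting $g(u)=c\cdot f_{I_{\Phi^m(X)}[\mathbf{d}]}^{k}$, this only places $g$ in $\mathfrak{C}_{\mathbf{d}}(I_{\Phi^m(X)})=\bigcup_{\delta_0}(I_{\Phi^m(X)}[\mathbf{d}]:A_{\delta_0})$; it gives no control on \emph{which} $\delta_0$ works, and in particular does not witness niceness at the specific degree $D_m+|\mathbf{d}|-r+1$. Your alternative route (test against $\psi^*(p)$ and descend by injectivity of $\psi^*$) requires not the injectivity of $\psi^*\colon A\to A$ but that of the induced map
\[
(A/I_{\Phi^m(X)})[u]/(U_i)\;\longrightarrow\;(A/I_X)[u]/(\psi^*U_i),
\]
i.e.\ that $R\cap (U_i)M=(U_i)R$ inside $M=(A/I_X)[u]$ viewed as a module over $R=(A/I_{\Phi^m(X)})[u]$. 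This would follow if $\psi|_X\colon X\to\Phi^m(X)$ were flat, but that fails as soon as $X$ is singular and the fiber length of $\psi|_X$ jumps; you yourself flag that the $I_X$-component of a decomposition need not lie in $\psi^*(I_{\Phi^m(X)})$. The supersympa machinery for complete intersections is exactly the device \cite{DP99} develops to circumvent this obstruction, and it is what is missing from your outline.
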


\begin{proof}
We follow the proof of \cite[Lemma 5.1]{DP99}. The variety $X$ is an isolated component of the complete intersection $Z$ of $N-\dim X$ hypersurfaces with degree $\le d(X)$. Therefore, $\Phi^m(X)$ is also an isolated component of $\Phi^m(Z)$. By \cite[Proposition 4.2 (i)]{DP99} we see that $Z$ is `supersympa' of degree $(N-\dim X)(d(X)-1)$. Since $\Phi^m$ has degree $d^m$, applying \cite[Proposition 4.5]{DP99} to the induced morphism of algebras $\Phi^m: \ovl{K_0}[x_0,\ldots,x_N] \to \ovl{K_0}[x_0,\ldots,x_N]$, 
we infer that the ideal associated to $\Phi^m(Z)$ is nice of degree $\left[ (N-\dim X) \frac{\deg(X)-1}{d^m}\right]+\dim X+1$ as desired.  By \cite[Proposition 4.2 (iii)]{DP99}, so is $\Phi^m(X)$. 
\end{proof}
We will also need the following lemma. 
Recall the definition of $c_0(N)$ from \eqref{nf heights compare}.

\begin{lemma}\label{choice of m}
Let $\delta>0$. Let
$$m\coloneqq \left\lceil \log_d\left(\frac{(2+6h(N+1))(\dim X+1)^2(4e)^{\dim X+1}(c(\Phi,K_1)+c_0(N)}{\hat{h}_{\phi}(X)}\right)\right\rceil.$$
For every $\epsilon<\frac{\delta (c(\Phi)+c_0)}{2}$, we have that 
\begin{align*}
\hat{h}_{\mathrm{Ph}}(\Phi^m(X))\ge (\dim X+1)(4e)^{\dim X+1}\left(\frac{2\epsilon}{\delta}+6h(N+1)\right).
\end{align*}
\end{lemma}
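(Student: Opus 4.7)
The plan is to translate the defining inequality for $m$ into a lower bound on $\hat h_{\mathrm{Ph}}(\Phi^m(X))$ by chaining three standard comparisons, and then to exploit the restriction on $\epsilon$ to absorb the additive $6h(N+1)$ on the right-hand side of the target inequality. Throughout, set $r:=\dim X+1$, $A:=(4e)^r$, $h:=h(N+1)$, and $C:=c(\Phi,K_1)+c_0(N)$; note that $c(\Phi,K_1)\ge 0$ by construction, $c_0(N)=\tfrac{7}{2}h$ in the number-field case, and $c_0(N)=h=0$ for function fields.

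First I would use the scaling relation $\hat h_{\Phi}(\Phi^m(X))=d^m\hat h_{\Phi}(X)$, which is immediate from the limit defining $\hat h_{\Phi}$ in Definition/Proposition~\ref{canonical height def/prop} by re-indexing. Next, applying hypothesis~(1) of Theorem~\ref{quantitative Zhang3} to $\Phi^m(X)$ (whose dimension equals $\dim X$) and rearranging yields $\hat h_{\mathrm{st}}(\Phi^m(X))\ge d^m\hat h_{\Phi}(X)/r - c(\Phi,K_1)$. Combining this with the Philippon-versus-standard comparison~\eqref{nf heights compare} and the crude bound $c(\Phi,K_1)+rc_0(N)\le rC$ (using $r\ge 1$) produces
\[
\hat h_{\mathrm{Ph}}(\Phi^m(X))\;\ge\;\frac{d^m\hat h_{\Phi}(X)}{r}-rC.
\]
Feeding in the defining inequality for $m$, namely $d^m\hat h_{\Phi}(X)\ge (2+6h)\,r^2AC$, yields the clean lower bound $\hat h_{\mathrm{Ph}}(\Phi^m(X))\ge rC\bigl((2+6h)A-1\bigr)$.

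The hypothesis $\epsilon<\delta C/2$ is equivalent to $2\epsilon/\delta+6h<C+6h$, so the desired conclusion is implied by the numerical inequality $C\bigl((2+6h)A-1\bigr)\ge A(C+6h)$, which rearranges to $AC+6hA(C-1)\ge C$. In the function-field case ($h=0$) this is just $AC\ge C$, which holds since $A\ge 1$. In the number-field case, $C\ge c_0(N)=\tfrac{7}{2}h\ge \tfrac{7}{2}\log 2>1$ for all $N\ge 1$, so both terms on the left are nonnegative and the inequality again reduces to $AC\ge C$.

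The main obstacle is purely bookkeeping: the prefactor $2+6h(N+1)$ in the definition of $m$ has been tuned precisely to absorb both the loss $-rC$ arising from hypothesis~(1) together with~\eqref{nf heights compare}, and the additive $6h(N+1)$ appearing on the right of the target inequality. No deeper input beyond the comparison results already established in the section is needed.
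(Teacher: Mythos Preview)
Your proof is correct and follows essentially the same route as the paper: apply hypothesis~(1) together with the scaling $\hat h_{\Phi}(\Phi^m(X))=d^m\hat h_{\Phi}(X)$ and the comparison~\eqref{nf heights compare}, plug in the defining bound for $m$, and then use $\tfrac{2\epsilon}{\delta}<C$ together with $C\ge 1$ (in the number-field case) to absorb the additive $6h(N+1)$. You are in fact slightly more careful than the paper in tracking the factor $r$ on $c_0$ coming from~\eqref{nf heights compare}; the paper writes $-c(\Phi,K_1)-c_0$ where $-c(\Phi,K_1)-rc_0$ is what the cited inequalities actually give, but the subsequent step is robust enough to absorb either.
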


\begin{proof}
Recall that $c(\Phi,K_1)$ is a constant satisfying 
$$\hat{h}_{\Phi}(\Phi^m(X))\;\le\;(\dim X+1)\left(\hat{h}(\Phi^m(X))
 + c(\Phi,K_1) \right),$$
 and that by the limit Definition/Proposition \ref{canonical height def/prop} we have $\hat{h}_{\Phi}(\Phi^m(X))=d^m\hat{h}_{\Phi}(X)$. 
 Then, using also \eqref{nf heights compare}, the choice of $m$ and the bound for $\epsilon$, we obtain 
\begin{align*}
\hat{h}_{\mathrm{Ph}}(\Phi^m(X))&\ge\frac{1}{\dim X+1} \hat{h}_{\Phi}(\Phi^m(X))-c(\Phi,K_1)-c_0
=\frac{d^m}{\dim X+1}\hat{h}_{\Phi}(X)-c(\Phi,K_1)-c_0\\
&\ge (\dim X+1)(2+6h(N+1))(4e)^{\dim X+1}(c(\Phi,K_1)+c_0)- c(\Phi,K_1)-c_0\\
&\ge (\dim X+1)(1+6h(N+1))(4e)^{\dim X+1}(c(\Phi,K_1)+c_0)\\
&>(\dim X+1)\left(\frac{2\epsilon}{\delta}+6h(N+1)\right)(4e)^{\dim X+1},
\end{align*}
as claimed.
\end{proof}
We are now ready to complete the proof of Theorem \ref{quantitative Zhang3}. 
Let $m$ be as in Lemma \ref{choice of m} and note that by Lemma \ref{nice iterate} we have that $\Phi^m(X)$ is $D_m$-nice. 
Let
$$\delta_m\coloneqq  2D_m+\dim X+2,$$
and note that it is (crudely) bounded by
\begin{align}\label{deltam_bound}
\begin{split}
 \delta_m \leq 3N (\deg X)(\dim X +1).
\end{split}
\end{align}
In view of Lemma \ref{choice of m} and by Theorem \ref{David--Philippon--mult}, we see that there is a subvariety $Z\subset \Phi^m(X)$ (not necessarily irreducible) with $\deg Z\le \delta_m \deg(\Phi^m(X))$ and such that 
\begin{align}\label{weil height small}
\left\{x\in \Phi^m(X)(\overline{K}):h_{\mathrm{Ph}}(x)\le \frac{1}{(4e)^{\dim X+1}}\hat{h}_{\mathrm{Ph}}(\Phi^m(X))\right\}\subset Z.
\end{align}
Here the subvariety $Z$ comes as an intersection of the homogeneous form $q$ from Theorem \ref{David--Philippon--mult} and $\Phi^m(X)$ and we have used Bezout's theorem to infer $\deg (Z)\le \delta_m \deg(\Phi^m(X))$. 
Now recall that we have fixed a subfield $K_1 \subseteq \ovl{K_0}$ for which 
\begin{equation} \label{eq: Compare1}
\sup_{x \in \Phi^m(X)(K_1)} |\hat{h}_{\Phi}(x) - \hat{h}(x)| \leq c(\Phi,K_1).
\end{equation}
Thus we have
\begin{align*}\label{trapiterate}
\begin{split}
&\left\{x\in \Phi^m(X)(K_1):\hat{h}_{\Phi}(x)\le \frac{d^m}{2(\dim X+1)(4e)^{\dim X+1}}\hat{h}_{\Phi}(X)\right\}\subset \\
&\left\{x\in \Phi^m(X)(K_1):h_{\mathrm{Ph}}(x)\le\frac{d^m}{2(\dim X+1)(4e)^{\dim X+1}}\hat{h}_{\Phi}(X)+c(\Phi,K_1)+c_0\right\}\subset \\
&\left\{x\in \Phi^m(X)(K_1):h_{\mathrm{Ph}}(x)\le \frac{1}{(4e)^{\dim X+1}}\hat{h}_{\mathrm{Ph}}(\Phi^m(X))\right\}\subset Z,
\end{split}
\end{align*}
where in the first inclusion we used \eqref{nf heights compare}, and in the second inclusion we used \eqref{eq: Compare1} and that by our choice of $m$ we have 
\begin{align*}
 \hat{h}_{\mathrm{Ph}}(\Phi^m(X))\ge \frac{d^m}{2(\dim X+1)} \hat{h}_{\Phi}(X)+(4e)^{\dim X+1}(c(\Phi,K_1)+c_0).
\end{align*}
Now note that since $\Phi$ is defined over $K_1$ and canonical heights satisfy $\hat{h}_{\Phi}(\Phi^m(z))=d^m\hat{h}_{\Phi}(z)$ 
we may infer 
\begin{equation}\label{trapiterate2}
\begin{split}
&\left\{z\in X(K_1):\hat{h}_{\Phi}(z)\le \frac{1}{2(\dim X+1)(4e)^{\dim X+1}}\hat{h}_{\Phi}(X)\right\}\subset \Phi^{-m}(Z).
\end{split}
\end{equation}

Moreover, 
\begin{align*}
&\deg \phi^{-m}(Z)= d^{m (N - \dim Z)} \deg Z\le d^{m (N- \dim Z)} \delta_m\deg(\Phi^m(X))\le\\
&\le d^{m (N-\dim Z)} \delta_m d^{m\dim \Phi^m(X)}\deg(X)\\
& \le 3N (\dim X+1) (\deg X)^2\left(\frac{(2+6h(N+1))(\dim X+1)^2(4e)^{\dim X+1}(c(\Phi,K_1)+c_0)}{\hat{h}_{\phi}(X)}+1\right)^{N+1},
\end{align*}
where we used the fact that $Z=\Phi^m(X)\cap Z(q)$ and the bound for $\delta_m$ from \eqref{deltam_bound}. 
This completes the proof of Theorem \ref{quantitative Zhang3}. 

 \qed

\section{Applications towards effectivity in the dynamical Bogomolov Conjecture} \label{sec: DynamicalBogomolov}

In this section our goal is to prove Theorem \ref{oberlyff}. Given our quantitative version of the fundamental inequality, it is easy to see that it suffices to establish the following result.

\begin{theorem}\label{heightboundfermat}
Let $K_0$ be either $\bQ$ or a function field. 
Let $d\ge 2$ and let $C_n:~x^n+y^n=z^n$ be the Fermat curve in $\mathbb{P}^2$ for $n\in\bN$.
Then for every regular polynomial endomorphism $\Phi:\mathbb{P}_{\overline{K_0}}^2\to \mathbb{P}_{\overline{K_0}}^2$ such that $\Phi|_{L_{\infty}}\in\overline{K_0}(z)$ has degree $\ge 2$,  the following hold. 
\begin{enumerate}
\item If $K_0$ is a function field, $n\ge 2^{21} d^6$, $\mathrm{char}(K_0)\nmid n$, and $\Phi|_{L_{\infty}}$ is not defined over the field of constants of $K_0$, then 
$$\hat{h}_{\Phi}(C_n)\ge \frac{1}{8(4e)^2(\sqrt{d}+1)^2}  h(\Phi|_{L_{\infty}}).$$
\item If $K_0=\bQ$, $n\ge 2^{1600d^3}$, and $\Phi|_{L_{\infty}}$ is a monic polynomial that is not a power map, then $$\hat{h}_{\Phi}(C_n)\ge \frac{1}{8 (4e)^{2}2^{500d^3}}\max\{h(\Phi|_{L_{\infty}}),1\}.$$ 
\end{enumerate}
\end{theorem}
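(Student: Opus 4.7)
The plan is to reduce the two-dimensional problem on $(\Phi,\mathbb{P}^2)$ to the one-dimensional restricted system $(\Phi|_{L_\infty}, \mathbb{P}^1)$ via an arithmetic intersection formula, and then apply our Theorem~\ref{quantitative Zhang} to $\Phi|_{L_\infty}$ on $\mathbb{P}^1$ combined with an explicit lower bound on $\hat{h}_{\Phi|_{L_\infty}}(\mathbb{P}^1)$.

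The first step exploits total invariance of $L_\infty$. Since $\Phi$ is a morphism with $\Phi^{*}L_\infty = d\cdot L_\infty$, the restriction $\Phi|_{L_\infty}$ has degree $d$, and for every $P \in L_\infty(\overline{K_0})$ one has $\hat{h}_\Phi(P) = \hat{h}_{\Phi|_{L_\infty}}(P)$. Viewing $Z$ as a global section of $\overline{\mathcal{O}(1)}_\Phi$ with divisor $L_\infty$, and using that $C_n \cap L_\infty$ consists of the $n$ distinct points $[\zeta:1:0]$ with $\zeta^n = -1$ (distinctness of the roots uses $\mathrm{char}(K_0)\nmid n$ in Part~(1)), the arithmetic intersection identity on the arithmetic surface associated to $C_n$ gives
\[
n\,\hat{h}_\Phi(C_n) \;=\; \overline{\mathcal{O}(1)}_\Phi|_{C_n}^{\,2} \;=\; \sum_{\zeta^n=-1} \hat{h}_{\Phi|_{L_\infty}}([\zeta:1]) \;+\; \sum_v \int_{C_n^{\mathrm{an}}} \bigl(-\log \|Z\|_{\Phi,v}\bigr)\,d\mu_{\Phi,v,C_n}.
\]
Because $\Phi$ is a polynomial endomorphism with $\Phi^{*}Z = c\,Z^d$ up to scalar, the canonical potential of $Z$ is the non-negative escape-rate Green function of $\Phi$, so $-\log\|Z\|_{\Phi,v}\geq 0$ at every place and the integral term is non-negative. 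Dividing by $n$ yields
\[
\hat{h}_\Phi(C_n)\;\geq\;\tfrac{1}{n}\sum_{\zeta^n=-1}\hat{h}_{\Phi|_{L_\infty}}([\zeta:1]).
\]

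Next, we bound this average from below by applying Theorem~\ref{quantitative Zhang} to $\Phi|_{L_\infty}:\mathbb{P}^1\to\mathbb{P}^1$ with $X=\mathbb{P}^1$. Since $\dim X=1$, the exceptional subvariety $\mathcal{Z}\subsetneq \mathbb{P}^1$ is a finite set whose cardinality $M$ is explicit in $d$, $h(\Phi|_{L_\infty})$, and $\hat{h}_{\Phi|_{L_\infty}}(\mathbb{P}^1)$, and every $P\notin\mathcal{Z}$ satisfies $\hat{h}_{\Phi|_{L_\infty}}(P)\geq \tfrac{1}{4(4e)^{2}}\hat{h}_{\Phi|_{L_\infty}}(\mathbb{P}^1)$. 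Provided $n\geq 2M$, at least $n/2$ of the $n$ roots $[\zeta:1]$ avoid $\mathcal{Z}$, hence
\[
\hat{h}_\Phi(C_n)\;\geq\;\tfrac{1}{8(4e)^{2}}\,\hat{h}_{\Phi|_{L_\infty}}(\mathbb{P}^1).
\]

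The final step, and principal obstacle, is an explicit lower bound $\hat{h}_{\Phi|_{L_\infty}}(\mathbb{P}^1)\geq c(d)\,h(\Phi|_{L_\infty})$ (respectively $\geq c(d)\,\max\{h(\Phi|_{L_\infty}),1\}$). In Part~(1), when $\Phi|_{L_\infty}$ is not defined over the constant field, we intend to use the methods of Ang--Yap \cite{AY23} together with an isotriviality/specialization argument to take $c(d)=\tfrac{1}{(\sqrt{d}+1)^2}$, the factor $(\sqrt{d}+1)^2$ being characteristic of their Green-function computations. In Part~(2), when $\Phi|_{L_\infty}$ is a monic non-power polynomial, we invoke the quantitative equidistribution estimates of Oberly \cite{Obe24}, building on Favre--Rivera-Letelier \cite{FRL06}, to take $c(d)=2^{-500d^3}$. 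Substituting these bounds into the explicit expression for $M$ provided by Theorem~\ref{quantitative Zhang} one then verifies that the hypotheses $n\geq 2^{21}d^6$ and $n\geq 2^{1600d^3}$ respectively imply $n\geq 2M$. The hardest point is the lower bound on $\hat{h}_{\Phi|_{L_\infty}}(\mathbb{P}^1)$ in Part~(2): the exponential $2^{-500d^3}$ quantifies the obstruction to equidistributing the $n$-th roots of $-1$ toward the $\Phi|_{L_\infty}$-equilibrium measure on the Julia set, which for non-power maps differs from Haar measure on the unit circle.
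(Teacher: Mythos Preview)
Your first step---reducing $\hat{h}_\Phi(C_n)$ to the average $\tfrac{1}{n}\sum_{\zeta^n=-1}\hat{h}_f([\zeta:1])$ with $f=\Phi|_{L_\infty}$ via non-negativity of the escape-rate Green function---matches the paper exactly (see \eqref{fermattodiagonal}). The fatal gap is in your second and third steps: you propose to apply Theorem~\ref{quantitative Zhang} to $f:\mathbb{P}^1\to\mathbb{P}^1$ with $X=\mathbb{P}^1$, and then to prove $\hat{h}_f(\mathbb{P}^1)\geq c(d)\,h(f)$. But $\hat{h}_f(\mathbb{P}^1)=0$ for \emph{every} endomorphism $f$ of $\mathbb{P}^1$. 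One sees this from the limit definition, since $f^n(\mathbb{P}^1)=\mathbb{P}^1$ gives $\hat{h}_f(\mathbb{P}^1)=\lim_n d^{-n}\hat{h}_{\mathrm{st}}(\mathbb{P}^1)=0$; or from $f^*\overline{O(1)}_f\simeq d\cdot\overline{O(1)}_f$ and the projection formula, which force $d^2\,\overline{O(1)}_f^{\,2}=d\,\overline{O(1)}_f^{\,2}$; or simply because preperiodic points are Zariski dense in $\mathbb{P}^1$, so the essential minimum vanishes and Zhang's inequality gives $\hat{h}_f(\mathbb{P}^1)\le 0$. Thus Theorem~\ref{quantitative Zhang} with $X=\mathbb{P}^1$ is vacuous, and no inequality $\hat{h}_f(\mathbb{P}^1)\geq c(d)h(f)$ can hold.

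The paper's fix is to introduce a second coordinate. One works on $\mathbb{P}^1\times\mathbb{P}^1$ with the split map $(f,\phi_\ell)$, where $\phi_\ell(z)=z^\ell$ and $\ell=\deg f$, and applies Theorem~\ref{quantitative Zhang2} to the \emph{diagonal} $\Delta$ (Proposition~\ref{QuantitativeBound1}). Because $n$-th roots of $-1$ are $\phi_\ell$-preperiodic, the points $([\zeta:1],[\zeta:1])\in\Delta$ have $(f,\phi_\ell)$-height exactly $\hat{h}_f([\zeta:1])$, so the quantitative inequality on $\Delta$ controls precisely your average. The crucial quantity $\hat{h}_{(f,\phi_\ell)}(\Delta)$ equals the Arakelov--Zhang pairing $(f,\phi_\ell)_{\mathrm{AZ}}$, which is genuinely positive whenever $f$ and $\phi_\ell$ have distinct equilibrium measures, and admits explicit lower bounds. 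You also have the attributions reversed: the $(\sqrt{d}+1)^{-2}$ bound (valid in both settings, and used for Part~(1)) comes from Oberly \cite{Obe24} via Proposition~\ref{fewrootsofunity}, while the uniform $2^{-500d^3}$ bound over $\mathbb{Q}$ for monic non-power polynomials (Part~(2)) is obtained by the bump-function and moment argument of Ang--Yap \cite{AY23}, as in Proposition~\ref{LowerBoundPairingNumberField}.
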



To avoid repetition, we fix a polynomial endomorphism $\Phi:\mathbb{P}^2_{\overline{K_0}}\to\mathbb{P}^2_{\overline{K_0}}$, an integer $n\ge 1$, and the Fermat curve $C_n:x^n+y^n=z^n\subset\mathbb{P}^2$. Let $f\coloneqq \Phi|_{L_\infty}$, identifying $L_\infty\simeq\mathbb{P}^1$, so that $f:\mathbb{P}^1_{\overline{K_0}}\to\mathbb{P}^1_{\overline{K_0}}$ is a rational map of degree $\ell\ge 2$. We also consider the product map $(f,\phi_\ell):\mathbb{P}^1\times\mathbb{P}^1\to\mathbb{P}^1\times\mathbb{P}^1$ given by $(x,y)\mapsto(f(x),y^\ell)$. Finally, we fix the line bundle $O(1,1)=\pi_1^{*}O(1)\otimes\pi_2^{*}O(1)$ on $\mathbb{P}^1\times\mathbb{P}^1$, where $\pi_i$ denotes the projection to the $i$-th factor.

If additional assumptions on $K_0$, $\Phi$, $f$, or $n$ are required, we will state them explicitly in the relevant lemmas, so that it is clear at which point each hypothesis becomes essential.

Our first step is to relate the $\Phi$-canonical height of the Fermat curve to the $(f,\phi_{\ell})$-canonical height of the diagonal in $\bP^1\times\bP^1$. 

\begin{proposition} \label{QuantitativeBound1}
Let $\ell\ge 2$ and $c_1(1,\ell),c_2(1,\ell)$ as in Proposition \ref{height diff}. 
Assume that 
\begin{align*}
n\ge 144\left(
     (2+6h(4))(4e)^{2}4^2
     \frac{\bigl(c_1(1,\ell)h(f)+c_2(1,\ell)+\tfrac{7}{2}h(4)\bigr)}
          {\hat{h}_{(f,\phi_{\ell}),O(1,1)}(\Delta)} +1 \right)^3,
\end{align*}
and $\mathrm{char}(K_0)\nmid n$. Then 
\begin{align*}
\hat{h}_{\Phi}(C_n)\ge  \frac{1}{8 (4e)^{2}}\hat{h}_{(f,\phi_{\ell}),O(1,1)}(\Delta).
\end{align*}
\end{proposition}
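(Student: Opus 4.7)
The plan is to deduce the desired lower bound on $\hat{h}_\Phi(C_n)$ from a lower bound on the $(f,\phi_\ell)$-canonical heights of $n$ explicit points of the diagonal $\Delta\subset\mathbb{P}^1\times\mathbb{P}^1$, produced from the $n$ distinct intersection points of $C_n$ with the totally $\Phi$-invariant line $L_\infty$. The hypothesis $\mathrm{char}(K_0)\nmid n$ guarantees that $C_n\cap L_\infty=\{P_\zeta:=[1:\zeta:0]:\zeta^n=-1\}$ consists of $n$ distinct points, and total $\Phi$-invariance of $L_\infty$ together with $\Phi|_{L_\infty}=f$ gives $\hat{h}_\Phi(P_\zeta)=\hat{h}_f(\zeta)$ and $\hat{h}_\Phi(L_\infty)=0$. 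Moreover, each such $\zeta$ is a $2n$-th root of unity, so $h(\zeta)=0$ and $\hat{h}_{\phi_\ell}(\zeta)=h(\zeta)=0$; consequently the point $Q_\zeta:=(\zeta,\zeta)\in\Delta$ satisfies $\hat{h}_{(f,\phi_\ell),\mathcal{O}(1,1)}(Q_\zeta)=\hat{h}_f(\zeta)$.

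The first main step will be to establish the arithmetic intersection-theoretic inequality
$$n\,\hat{h}_\Phi(C_n) \;\geq\; \sum_{\zeta^n=-1}\hat{h}_f(\zeta).$$
Writing $n\,\hat{h}_\Phi(C_n)=\overline{\mathcal{O}(1)}_\Phi^{\cdot 2}\cdot C_n$ and decomposing one factor against the divisor $L_\infty$ cut out by the section $z$ of $\mathcal{O}(1)$, the arithmetic intersection splits as $\sum_\zeta\hat{h}_f(\zeta)$ plus a correction given by integrating the defect $G_{\Phi,v}-\log|z|_v$ against the canonical measure on $C_n$ at each place $v$. Because $\Phi$ is a regular polynomial endomorphism and $L_\infty$ is totally $\Phi$-invariant, this defect is non-negative, so the correction is nonnegative and the displayed inequality follows.

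Next, I will apply Theorem~\ref{quantitative Zhang2} to the polarized dynamical system $((f,\phi_\ell),\mathbb{P}^1\times\mathbb{P}^1,\mathcal{O}(1,1))$ with $X'=\Delta$. Proposition~\ref{height diff} applied to $f:\mathbb{P}^1\to\mathbb{P}^1$ (the power map $\phi_\ell$ contributing no error since $\hat{h}_{\phi_\ell}\equiv h$) bounds the constant $C((f,\phi_\ell),\iota)$ by $c_1(1,\ell)h(f)+c_2(1,\ell)+\tfrac{7}{2}h(4)$. The theorem then produces a finite exceptional set $Z\subsetneq\Delta$ of cardinality at most half the right-hand side of the hypothesis on $n$. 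The hypothesis $n\geq 144(\cdots)^3$ is calibrated exactly so that $n\geq 2|Z|$, so at least $n/2$ of the points $Q_\zeta$ satisfy $\hat{h}_f(\zeta)=\hat{h}_{(f,\phi_\ell)}(Q_\zeta)\geq\tfrac{1}{64e^2}\hat{h}_{(f,\phi_\ell)}(\Delta)$. Averaging,
$$\tfrac{1}{n}\sum_{\zeta^n=-1}\hat{h}_f(\zeta) \;\geq\; \tfrac{1}{2}\cdot\tfrac{1}{64e^2}\hat{h}_{(f,\phi_\ell)}(\Delta) \;=\; \tfrac{1}{8(4e)^2}\hat{h}_{(f,\phi_\ell)}(\Delta),$$
and combining with the first step gives the claim.

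The technical heart of the argument will be the first step: establishing $n\hat{h}_\Phi(C_n)\geq\sum\hat{h}_f(\zeta)$ rigorously, i.e.\ verifying that the arithmetic correction coming from the difference between the $\Phi$-invariant and standard metrics on $\mathcal{O}(1)$ contributes in the favourable direction when integrated against $C_n$. Total $\Phi$-invariance of $L_\infty$ together with the semipositivity of the canonical measure on $C_n$ should make this transparent, but a careful local analysis at each archimedean and non-archimedean place is required, drawing on ideas from Favre--Rivera-Letelier~\cite{FRL06}, Oberly~\cite{Obe24}, and Ang--Yap~\cite{AY23}.
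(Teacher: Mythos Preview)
Your proposal is correct and follows essentially the same route as the paper: first the inequality $n\,\hat{h}_\Phi(C_n)\ge\sum_{\zeta^n=-1}\hat{h}_f(\zeta)$, then Theorem~\ref{quantitative Zhang2} applied to $((f,\phi_\ell),\mathbb{P}^1\times\mathbb{P}^1,O(1,1))$ and $\Delta$ via the Segre embedding, then the counting/averaging argument you describe.

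The one noteworthy discrepancy is emphasis. You label the first inequality the ``technical heart'' and plan a careful place-by-place analysis invoking \cite{FRL06,Obe24,AY23}. In the paper this step is a one-line citation: it is exactly \cite[equation~(4)]{DFR23}, and follows from the construction of the $\Phi$-invariant metric together with the total $\Phi$-invariance of $L_\infty$ (which forces the local correction term you identify to be nonnegative). The references \cite{FRL06,Obe24,AY23} are not used here at all; they enter only in the subsequent Propositions~\ref{fewrootsofunity} and~\ref{LowerBoundPairingNumberField}, which bound $\hat{h}_{(f,\phi_\ell)}(\Delta)$ from below. So your argument is right, but you are planning to reprove a known inequality and citing the wrong sources for it.
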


\begin{proof}

By Definition/Proposition \ref{height def} (3), the definition of the intersection pairing in \cite{Gub98, CLT09}, the construction of the $\Phi$-invariant metric, and the assumption that $\Phi$ is a regular polynomial endomorphism, we infer that 
\begin{equation}\label{fermattodiagonal}
\hat{h}_{\Phi}(C_n)= \frac{\overline{L}^2_{\Phi}\cdot C_n}{\deg C_n} \ge \frac{1}{n} \left(\displaystyle\sum_{x^n=-1 }\hat{h}_f(x) \right),
\end{equation}
exactly as in \cite[equation (4)]{DFR23}. 

Next we wish to apply Theorem \ref{quantitative Zhang2} to triple $((f,\phi_{\ell}), \bP^1\times\bP^1, O(1,1))$ and to the diagonal subvariety $\Delta\subset \bP^1\times \bP^1$. 
To this end, we fix the Segre embedding $s: \bP^1\times\bP^1\hookrightarrow \bP^3$ and note that $h(s(x,y))=h(x)+h(y)$ for $x,y\in \bP^1(\overline{K_0})$ so that by Proposition \ref{height diff} we have 
$$C((f,\phi_{\ell}),s)=\sup_{x\in \bP^1(\overline{K_0})}|\hat{h}_f(x)-h(x)|\le c_1(1,\ell)h(f)+c_2(1,\ell).$$ 
Note also that by the projection formula $\deg_{O(1,1)}\Delta= 2$. 
We infer that the number of roots of unity $\xi$ such that 
$\hat{h}_f(\xi)<\frac{1}{4 (4e)^{2}}\hat{h}_{(f,\phi_{\ell}),O(1,1)}(\Delta)$ is at most 
\begin{align*}
72\left(
     \frac{(2+6h(4))(4e)^{2}4^2
     \bigl(c_1(1,\ell)h(f)+c_2(1,\ell)+\tfrac{7}{2}h(4)\bigr)}
          {\hat{h}_{(f,\phi_{\ell}),O(1,1)}(\Delta)} +1 \right)^3.
\end{align*}
The conclusion then follows from a simple counting by our choice of $n$ and \eqref{fermattodiagonal}.
\end{proof}

\begin{remark}
Similar consequences could have been inferred from \cite[Theorem B]{Gau24} upon a more precise analysis of the constants appearing in the author's proof. 
\end{remark}

The next two propositions allow us to compare the $(f,\phi_{\ell})$-height of the diagonal  with the height of $f$. 
The first one is a simple consequence of results from \cite{Obe24}.
\begin{proposition}\label{fewrootsofunity}
We have 
\begin{align*}
\hat{h}_{(f,\phi_{\ell}),O(1,1)}(\Delta)\ge \frac{1}{(\sqrt{\ell}+1)^2} h_{\mathrm{Ph}}(f)-\frac{3(\ell+2)}{2(\sqrt{\ell}+1)^2} h(2)-h(\sqrt{2}).
\end{align*}
\end{proposition}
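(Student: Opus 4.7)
The plan is to recast the canonical height of the diagonal as an arithmetic mutual energy on $\mathbb{P}^1$, and then invoke the local estimates of Oberly \cite{Obe24}.

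\textbf{Step 1 (Arithmetic intersection reduction).} By Definition/Proposition~\ref{canonical height def/prop}(3) and the computation $\deg_{O(1,1)}\Delta=2$,
\[
\hat{h}_{(f,\phi_\ell),O(1,1)}(\Delta)=\tfrac{1}{2}\,\overline{O(1,1)}_{(f,\phi_\ell)}^{\,2}\cdot \Delta.
\]
The invariant metric for the product system factors as $\overline{O(1,1)}_{(f,\phi_\ell)}=\pi_1^{\ast}\overline{O(1)}_f\otimes\pi_2^{\ast}\overline{O(1)}_{\phi_\ell}$, and restricting to $\Delta\simeq\mathbb{P}^1$ one has $\pi_i^{\ast}\overline{L}|_\Delta=\overline{L}$. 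Moreover the vanishings $\overline{O(1)}_f^{\,2}=\overline{O(1)}_{\phi_\ell}^{\,2}=0$ on $\mathbb{P}^1$ follow from the functional equation $f^{\ast}\overline{O(1)}_f=\overline{O(1)}_f^{\otimes\ell}$ (which forces $(\ell^2-\ell)\overline{O(1)}_f^{\,2}=0$), and analogously for $\phi_\ell$. Expanding the self-intersection, only the cross term survives and I obtain
\[
\hat{h}_{(f,\phi_\ell),O(1,1)}(\Delta)\;=\;\overline{O(1)}_f\cdot\overline{O(1)}_{\phi_\ell},
\]
an arithmetic intersection number on $\mathbb{P}^1$ that encodes the ``arithmetic distance'' between the two polarized dynamical systems $(f,\mathbb{P}^1,O(1))$ and $(\phi_\ell,\mathbb{P}^1,O(1))$.

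\textbf{Step 2 (Local--to--global decomposition).} Via the adelic energy formalism of \cite{CL06,FRL06}, this mutual intersection decomposes as
\[
\overline{O(1)}_f\cdot\overline{O(1)}_{\phi_\ell}\;=\;\sum_{v\in M_{K_0}}n_v\,\mathcal{E}_v(\mu_{f,v},\mu_{\phi_\ell,v}),
\]
where $\mathcal{E}_v$ denotes the local mutual energy pairing of the canonical measures at $v$. The measures $\mu_{\phi_\ell,v}$ are explicit: normalized Haar on the unit circle at archimedean $v$, and the Dirac at the Gauss point at non-archimedean $v$ (the power map has everywhere good reduction).

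\textbf{Step 3 (Applying Oberly).} The key technical input is the sharp place-by-place lower bound supplied by \cite{Obe24}, of the shape
\[
\mathcal{E}_v(\mu_{f,v},\mu_{\phi_\ell,v})\;\geq\;\frac{1}{(\sqrt{\ell}+1)^2}\log^{+}\|F\|_v - c_v,
\]
for a suitable homogeneous lift $F$ of $f$, where the denominator $(\sqrt{\ell}+1)^2$ originates from the potential--theoretic comparison between the Green function of $f$ and the explicit Green function of $\phi_\ell$ (Haar circle / Gauss point), and $c_v$ is an archimedean correction bounded in terms of $\ell$.

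\textbf{Step 4 (Summation).} Summing over all places and using the product formula, $\sum_v n_v\log^{+}\|F\|_v=h_{\mathrm{Ph}}(f)$ with the appropriate archimedean (Euclidean) normalization in Philippon's height. The accumulated archimedean corrections $\sum_v n_v c_v$ collect into the stated constants $\tfrac{3(\ell+2)}{2(\sqrt{\ell}+1)^2}h(2)$ and $h(\sqrt{2})$: the first reflects the place-by-place error in Oberly's inequality rescaled by $(\sqrt{\ell}+1)^{-2}$, while the second comes from the conversion between sup-norm and Euclidean-norm heights for a degree-$\ell$ homogeneous form on $\mathbb{P}^1$.

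The main obstacle is the careful bookkeeping of archimedean constants: extracting the sharp denominator $(\sqrt{\ell}+1)^2$ from Oberly's local potential--theoretic estimate (itself the delicate analytic input), and verifying that the combinatorial factor $\tfrac{3(\ell+2)}{2}$ in the $h(2)$ error matches the archimedean correction obtained by summing the local bounds. Everything else is formal manipulation of adelic intersection numbers.
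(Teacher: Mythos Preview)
Your Steps 1--2 are correct and match the paper: the canonical height of $\Delta$ equals the Arakelov--Zhang pairing $(f,\phi_\ell)_{\mathrm{AZ}}=\overline{O(1)}_f\cdot\overline{O(1)}_{\phi_\ell}$, and this decomposes as a sum of local mutual energies.

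The gap is in Step 3. Oberly does \emph{not} prove a place-by-place lower bound of the shape $\mathcal{E}_v(\mu_{f,v},\mu_{\phi_\ell,v})\ge (\sqrt\ell+1)^{-2}\log^{+}\|F\|_v - c_v$, and the denominator $(\sqrt\ell+1)^2$ does not come from a local Green-function comparison. What Oberly actually does, and what the paper uses, is a \emph{global} triangle-inequality argument in the adelic inner-product space of signed measures: one has $\|\ell^{-1}f^{*}\lambda_{\mathrm{Ar}}-\mu_f\|=\ell^{-1/2}\|\lambda_{\mathrm{Ar}}-\mu_f\|=\ell^{-1/2}\|\mu_f\|$ (using $f^{*}\mu_f=\ell\,\mu_f$ and the pullback scaling of the energy norm), whence $\|\mu_f\|\ge (1+\ell^{-1/2})^{-1}\|\ell^{-1}f^{*}\lambda_{\mathrm{Ar}}\|$; squaring and combining with Oberly's identity $\tfrac{1}{2\ell}\|f^{*}\lambda_{\mathrm{Ar}}\|^{2}=h_{\mathrm{Ph}}(f)$ gives $\tfrac{1}{2}\|\mu_f\|^{2}\ge (\sqrt\ell+1)^{-2}h_{\mathrm{Ph}}(f)$ up to the stated archimedean error. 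So the $(\sqrt\ell+1)^{2}$ is a Hilbert-space artifact, not a potential-theoretic one.

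Your account of the $h(\sqrt 2)$ term is also off. In the paper it does not come from comparing norms on the coefficient vector of $f$; it comes from the inequality $0\le h_{\mathrm{Ph}}(x)-h(x)\le h(\sqrt 2)$ for \emph{points} $x\in\mathbb{P}^1$, applied along a sequence $a_n$ with $\hat h_f(a_n)\to 0$, for which $h_{\mathrm{Ph}}(a_n)\to\tfrac{1}{2}\|\mu_f\|^{2}$ while $h(a_n)\to (f,\phi_\ell)_{\mathrm{AZ}}$. This is the bridge between $\tfrac{1}{2}\|\mu_f\|^{2}$ and the canonical height of $\Delta$, and it is precisely what your local decomposition skips over in the number-field case (where $\mu_{\phi_\ell,v}$ at archimedean $v$ is Haar on $S^1$, not the reference measure $\lambda_{\mathrm{Ar},v}$ used in Oberly's norm).
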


\begin{proof}
This follows from the work in \cite{Obe24}. 
We will explain how using the notations in loc.cit.
First note that the Arakelov--Zhang pairing of maps used in \cite{Obe24} and studied in \cite{PST12, FRL06, Fil17}, can be directly related with the canonical height of the diagonal 
\begin{align}\label{canheight=az}
(f,\phi_{\ell})_{\mathrm{AZ}}=\hat{h}_{(f,\phi_{\ell}),O(1,1)}(\Delta);
\end{align}
see \cite[Lemma 2.3]{Gau24}. 
Moreover, as in \cite[Proposition 5.12]{Obe24}, for any sequence $\{a_n\}\subset \overline{K_0}$ with $\hat{h}_f(a_n)\to 0$ as $n\to \infty$ we have that 
$h_{\mathrm{Ph}}(a_n)\to \frac{1}{2}\|\mu_f\|^2$ and $h(a_n)\to (f,\phi_{\ell})_{\mathrm{AZ}}$. 
and noting that $0\le h_{\mathrm{Ph}}-h\le h(\sqrt{2})$, we infer that 
\begin{equation}\label{easy}
(f,\phi_{\ell})_{\mathrm{AZ}}\ge \frac{1}{2}\|\mu_f\|^2- h(\sqrt{2}).
\end{equation}
In the case that $K_0=\bQ$ the result then follows from \cite[equation 5.9]{Obe24} since
\begin{equation} \label{squarevsazheight}
\begin{aligned}
\begin{split}
\frac{1}{2}\|\mu_f\|^2&\ge \frac{1}{(\sqrt{\ell}+1)^2} h_{\mathrm{Ph}}(f)- \frac{1}{(\sqrt{\ell}+1)^2} \left( \frac{3}{2}(\ell+1)\log2 +\frac{1}{4}\right)\\ 
&\ge \frac{1}{(\sqrt{\ell}+1)^2} h_{\mathrm{Ph}}(f)- \frac{3(\ell+2)}{2(\sqrt{\ell}+1)^2} \log2.
\end{split}
\end{aligned}
\end{equation}
Though the author in \cite{Obe24} formulates their results over a number field, the proofs extend verbatim to any product formula field. For the potential theoretic background in this generality we refer the reader to \cite{BR10}. Here, we explain briefly how the constants  improve in the absence of archimedean places. 
Suppose that $K_0$ is a function field. 
We let $\lambda_{\mathrm{Ar}}$ denote the adelic measure given by the delta mass at the Gauss point at each place $v\in M_K$ (constructed from the places of $M_{K_0}$) and note that in the absence of archimedean places we have that the corresponding $t_v=0$ at each place (see the discussion in \cite[page 8, proof of theorem 3.1]{Obe24}.)
First note that by definition (compare \cite[equation 3.6]{Obe24}) we have 
\begin{align}\label{az norm}
2(f,\phi_{\ell})_{\mathrm{AZ}}= \|\mu_f\|^2_{\lambda_{\mathrm{Ar}}}\coloneqq \|\mu_f\|^2.
\end{align}
Summing up \cite[Lemma 5.6]{Obe24} over all the places of $K$ we also have 
\begin{align}\label{pullbackvsheight}
\frac{1}{2\ell}{\|f^{*}\lambda_{\mathrm{Ar}}\|^2}= h_{\mathrm{Ph}}(f).
\end{align}
Note that from \cite[Proposition 3.2]{Obe24} (since $t=0$) we have 
\begin{align*}
\|\lambda_{\mathrm{Ar}}-\mu_f\|^2=\|\mu_f\|^2,
\end{align*}
and using also the fact that $\ell^{-1}f^*\mu_f=\mu_f$, we get
\begin{align*}\label{totriange}
\begin{split}
\|\ell^{-1}f^*(\lambda_{\mathrm{Ar}})-\mu_f\|^2&=\|\ell^{-1}f^*(\lambda_{\mathrm{Ar}}-\mu_f)\|=\ell^{-1}\|\lambda_{\mathrm{Ar}}-\mu_f\|^2=\ell^{-1}\|\mu_f\|^2.
\end{split}
\end{align*}
Now by the triangular inequality for $\|\cdot\|$ we have 
\begin{align*}
\left| \|\ell^{-1}f^*(\lambda_{\mathrm{Ar}})\|-\|\mu_f\|\right|\le \|\ell^{-1}f^*(\lambda_{\mathrm{Ar}})-\mu_f\|=\ell^{-1/2}\|\mu_f\|,
\end{align*}
so that recalling \eqref{az norm} and \eqref{pullbackvsheight} we get
\begin{align*}
2(f,\phi_{\ell})_{\mathrm{AZ}}=\|\mu_f\|^2\ge \left(\frac{1}{1+\ell^{-1/2}}\right)^2\|\ell^{-1}f^*(\lambda_{\mathrm{Ar}})\|^2= \left(\frac{1}{1+\ell^{-1/2}}\right)^2\frac{2}{\ell}h_{\mathrm{Ph}}(f).
\end{align*}
The Proposition follows. 
\end{proof}
In the number field case we will also need a uniform lower bound for the canonical height of the diagonal. This is the main reason for restricting Theorem~\ref{oberlyff} to polynomial maps on the line at infinity. Our approach follows \cite{AY23}. We begin with the following simple lemma. 

\begin{lemma}\label{BumpFunctionEstimate}
There exists a smooth function $\psi_0:\mathbb{C}\to\mathbb{R}$ such that $\psi_0=1$ on $B(0,1)$, $\psi_0=0$ outside $B(0,2)$, and $\|\nabla\psi_0\|\le 2$.
\end{lemma}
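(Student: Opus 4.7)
The plan is to reduce to a one-dimensional radial construction and then build the radial profile by mollifying a piecewise-linear function whose slope is strictly less than the allowed bound $2$.

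First I would set $\psi_0(z) = \phi(|z|)$ for a smooth function $\phi:[0,\infty)\to[0,1]$ to be constructed satisfying $\phi\equiv 1$ on $[0,1]$, $\phi\equiv 0$ on $[2,\infty)$, and $|\phi'|\le 2$ everywhere. Since $\phi$ will be constant in a neighborhood of the origin, the composition $\phi(|z|)$ is smooth at $z=0$ as well, and away from $0$ the map $z\mapsto |z|$ is smooth. A direct computation gives $\nabla\psi_0(z) = \phi'(|z|)\,\tfrac{z}{|z|}$ for $z\ne 0$, whence $\|\nabla\psi_0(z)\| = |\phi'(|z|)|\le 2$, with $\nabla\psi_0(0)=0$. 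So everything reduces to constructing $\phi$.

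For the construction of $\phi$, I would start from the piecewise linear function
\[
\chi(r) = \begin{cases} 1, & 0\le r\le 5/4,\\ 2(7/4 - r), & 5/4 \le r \le 7/4,\\ 0, & r\ge 7/4,\end{cases}
\]
which takes values in $[0,1]$ and has $|\chi'|\le 2$ wherever defined. Fix a standard smooth mollifier $\eta\in C_c^\infty(\mathbb{R})$ supported in $[-1/8,1/8]$ with $\eta\ge 0$ and $\int\eta = 1$, and set $\phi = \chi * \eta$ extended to $[0,\infty)$ (using that $\chi$ is constant near $r=0$, so the convolution is well defined after even extension). Then $\phi$ is smooth, $\phi\equiv 1$ on $[0, 5/4 - 1/8] \supset [0,1]$, $\phi\equiv 0$ on $[7/4 + 1/8,\infty) \subset [2,\infty)$, and $\phi' = \chi' * \eta$ satisfies $|\phi'(r)|\le \|\chi'\|_\infty \int\eta = 2$ by Young's inequality.

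There is no real obstacle here: the only mildly delicate point is ensuring $|\phi'|\le 2$ rather than some larger constant, which forces the piecewise linear profile $\chi$ to have slope $\le 2$ and hence to do its descent over an interval of length at least $1/2$. The choice of transition interval $[5/4, 7/4]$ together with a mollifier of radius $1/8$ leaves enough room for both the plateau conditions $\psi_0\equiv 1$ on $B(0,1)$ and $\psi_0\equiv 0$ off $B(0,2)$ to be preserved after smoothing.
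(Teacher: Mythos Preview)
Your proof is correct. Both your argument and the paper's reduce to a radial construction $\psi_0(z)=\phi(|z|)$ with a one-variable profile $\phi$ satisfying $|\phi'|\le 2$; the difference lies only in how $\phi$ is built. The paper writes down the explicit smooth step $\eta(t)=\rho(t)/(\rho(t)+\rho(1-t))$ with $\rho(t)=e^{-1/t}\mathbf 1_{t>0}$ and sets $\psi_0(x)=\eta(2-\|x\|)$, then asserts $|\eta'|\le 2$ by direct computation (the bound is in fact attained at $t=1/2$). Your mollification of a slope-$2$ piecewise linear profile is a genuinely different construction: it makes the derivative bound immediate from Young's inequality without any calculus on a specific transcendental function, at the mild cost of being less explicit. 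Either route is entirely standard for this kind of bump-function lemma.
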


\begin{proof}
Let $\rho(t)=e^{-1/t}$ for $t>0$ and $\rho(t)=0$ for $t\le 0$, and define
\[
\eta(t)=\frac{\rho(t)}{\rho(t)+\rho(1-t)}.
\]
Set $\psi_0(x)=\eta(2-\|x\|).$
It is clear from the definition that $\psi_0=1$ on $B(0,1)$ and $\psi_0=0$ outside $B(0,2)$.
Moreover, the chain rule gives
\[
\nabla\psi_0(x)=\eta'(2-\|x\|)\,\nabla(2-\|x\|)
=-\eta'(2-\|x\|)\,\frac{x}{\|x\|}.
\]  
A direct computation shows that $|\eta'|\le 2$ on $\mathbb{R}$, and therefore $\|\nabla\psi_0\|\le 2$, as required.
\end{proof}

The lower bound on the height of the diagonal is as follows. 

\begin{proposition} \label{LowerBoundPairingNumberField}
Let $f\in\Qbar[z]$ be a monic polynomial with $\deg f=\ell\ge 2$ that is not a power map. We have 
\begin{align}
\hat{h}_{(f,\phi_{\ell}),O(1,1)}(\Delta)\ge 2^{-500\ell^3}\max\{h(f),1\}\ge 2^{-510\ell^3}(c_1(1,\ell)h(f)+ c_2(1,\ell)+7/2\log4).
\end{align}
\end{proposition}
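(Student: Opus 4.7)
The plan is to follow the quantitative bump-function approach of Ang--Yap \cite{AY23}. Using the identification $\hat h_{(f,\phi_\ell),O(1,1)}(\Delta)=(f,\phi_\ell)_{\mathrm{AZ}}$ from \eqref{canheight=az}, I would decompose the global Arakelov--Zhang pairing as a sum of non-negative local contributions
\[
(f,\phi_\ell)_{\mathrm{AZ}}=\sum_{v\in M_K} n_v\,(f,\phi_\ell)_{\mathrm{AZ},v},
\]
each expressible as a Dirichlet-type energy of the signed measure $\mu_{f,v}-\mu_{\phi_\ell,v}$. The plan has two goals: first, at every place $v$ where some coefficient $a_i$ of $f$ satisfies $|a_i|_v>1$, to bound $(f,\phi_\ell)_{\mathrm{AZ},v}$ below by a fixed fraction of $\max_i\log^+|a_i|_v$; second, at the archimedean place, to produce a uniform lower bound depending only on $\ell$, in order to control the regime $h(f)<1$ in which the first bound is vacuous.

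At a non-archimedean place $v$ with $\max_i|a_i|_v>1$, the plan is to use the Newton polygon of $f$ to describe $\mu_{f,v}$ on the Berkovich tree as a finite sum of point masses located strictly beyond the Gauss point, and then to compute the Berkovich Dirichlet energy along the corresponding segments, producing a local bound of the form $(f,\phi_\ell)_{\mathrm{AZ},v}\ge 2^{-O(\ell)}\max_i\log^+|a_i|_v$. At the archimedean place, I would apply the bump function $\psi_0$ from Lemma~\ref{BumpFunctionEstimate} to construct a smooth test function $\psi$ supported in a ball $B(z_0,r)\subset\mathbb{C}$ around a point $z_0$ of the Julia set $J_f$ separated from the unit circle $S^1$; Cauchy--Schwarz for the Dirichlet form then gives
\[
(f,\phi_\ell)_{\mathrm{AZ},\infty}\;\ge\;\frac{\bigl(\int\psi\,d(\mu_{f,\infty}-\mu_{\phi_\ell,\infty})\bigr)^2}{\|\nabla\psi\|_{L^2}^2},
\]
with the denominator controlled by Lemma~\ref{BumpFunctionEstimate} and the numerator by $\mu_{f,\infty}(B(z_0,r))^2$. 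Summing local bounds would give $(f,\phi_\ell)_{\mathrm{AZ}}\ge c(\ell)h(f)$ when $h(f)\ge 1$, while the archimedean contribution alone supplies the uniform $\ell$-dependent lower bound in the complementary regime.

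The main obstacle, and the apparent source of the coarse exponent $500\ell^3$, is the case $h(f)<1$: every coefficient of $f$ is then a root of unity by Kronecker's theorem, $f$ can have arbitrarily large algebraic degree, and $J_f$ may approach $S^1$ in Hausdorff distance as this degree grows, so a naive compactness argument fails. The key would be to exploit monicity together with $f\neq z^\ell$ to force a quantitative gap between $J_f$ and $S^1$ of size at least $2^{-O(\ell^3)}$, and a corresponding mass concentration $\mu_{f,\infty}(B(z_0,r))\ge 2^{-O(\ell^3)}$; this is exactly the kind of effective escape-rate estimate carried out in \cite{AY23}. The second inequality of the proposition would then follow by direct arithmetic: substituting $c_1(1,\ell)=2\ell+1$ and $c_2(1,\ell)=2(\ell+1)^{2\ell+5}\le 2^{10\ell^3}$ yields $c_1(1,\ell)h(f)+c_2(1,\ell)+\tfrac72\log 4\le 2^{10\ell^3}\max\{h(f),1\}$, which is equivalent to the stated comparison between $2^{-500\ell^3}$ and $2^{-510\ell^3}$.
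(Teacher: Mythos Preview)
Your overall structure---decompose $(f,\phi_\ell)_{\mathrm{AZ}}$ into non-negative local energies and treat large and small $h(f)$ separately---matches the paper, but both regimes are handled differently there. For the large-height case the paper does no place-by-place analysis at all: it simply invokes Proposition~\ref{fewrootsofunity} (Oberly's global bound $\hat h_{(f,\phi_\ell),O(1,1)}(\Delta)\ge (\sqrt\ell+1)^{-2}h_{\mathrm{Ph}}(f)-O_\ell(1)$), which already dispatches the range $h_{\mathrm{Ph}}(f)\ge 30(\ell+1)\log 2$ in one line. Your Newton-polygon program for non-archimedean places is therefore unnecessary.

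The genuine gap is in your small-height plan. First, the Kronecker claim is false: $h(f)<1$ does not force the coefficients to be roots of unity. More seriously, your strategy of finding $z_0\in J_f$ quantitatively separated from $S^1$ together with a mass bound $\mu_{f,\infty}(B(z_0,r))\ge 2^{-O(\ell^3)}$ is \emph{not} what \cite{AY23} does, and it cannot work at a single archimedean place: since $f$ may be defined over a number field of unbounded degree, at any one embedding the first nonzero coefficient $a_j$ can have $|a_j|_v$ arbitrarily small even when $h(f)<1$, so no uniform gap between $J_f$ and $S^1$ is available there. The paper's proof sidesteps this entirely. A pigeonhole on the small height produces a set $S$ of archimedean places of total normalized weight $\ge 1/2$ at which $1/C_\ell\le |a_j|_v\le C_\ell$, with $C_\ell=2^{60\ell(\ell+1)}$. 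At each $v\in S$ one then uses the \emph{moment identity} \cite[Lemma~3.21]{AY23},
\[
\left|\int z^{j}\,d(\mu_{f,v}-\mu_{S^1,v})\right|=\frac{j}{\ell}\,|a_j|_v\ \ge\ \frac{1}{\ell C_\ell},
\]
and tests against $\psi(z)=(3C_\ell)^j\psi_1(z/3C_\ell)$ with $\psi_1=z^j\psi_0$, so that $\psi\equiv z^j$ on $\supp\mu_{f,v}\cup S^1$; the Dirichlet Cauchy--Schwarz inequality \cite[Lemma~3.19]{AY23} then bounds the local energy from below uniformly. Summing over $v\in S$ gives the result. The monomial test function $z^j$ and the moment identity are the key ideas you are missing; geometric separation of $J_f$ from $S^1$ and pointwise mass concentration are never used.
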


\begin{proof}
By Proposition \ref{fewrootsofunity}, it is easy to see that if $h_{\Ph}(f) \geq 30(\ell+1)\log 2$, we have 
\begin{align*}
\hat{h}_{(f,\phi_{\ell}),O(1,1)}(\Delta)\ge \frac{1}{6\ell}h_{\mathrm{Ph}}(f)\ge \frac{1}{6\ell}h(f).
\end{align*}
For this we used the fact that $3\ell\geq (\sqrt{\ell}+1)^2$.
It remains to handle the case of $f$ with small height, and we henceforth let 
$f(z)=z^{\ell}+a_{1}z^{\ell-1}+\cdots+a_{\ell}\in K[z]$
be a monic polynomial defined over a finite extension $K$ of $\bQ$ with 
 $h(f) \leq 30(\ell+1) \log 2$. 
We write $C_{\ell}\coloneqq  2^{30(\ell+1)2\ell}$.  
Let $i\in \{1,\ldots,\ell\}$. Then
$$\sum_{v\in M_K^{\infty}}\frac{[K_v:\bQ_v]}{[K:\bQ]}\log^{+}|a_i|_v\le h(a_i)\le h(f)\le 30(\ell+1) \log 2,$$ 
so that letting $N_v\coloneqq  [K_v:\bQ_v]/[K:\bQ]$ we have 
\begin{align}\label{bounded height 1}
\sum_{v\in M_K^{\infty}~:~|a_i|_v \le C_{\ell}}N_v \ge \frac{2\ell-1}{2\ell},
\end{align}
and since $h(a_i)=h(a_i^{-1})$, we similarly have 
\begin{align}\label{bounded height 2}
\sum_{v\in M_K^{\infty}~:~|a_i|^{-1}_v \le C_{\ell}}N_v\ge \frac{2\ell-1}{2\ell}.
\end{align}
From \eqref{bounded height 1} and \eqref{bounded height 2} we infer that if 
$S=\{v\in M_K^{\infty}:\frac{1}{C_{\ell}}\le |a_i|_v \le C_{\ell}\}$, then 
\begin{align}\label{many good places}
\displaystyle\sum_{v\in S}N_v \ge1 - 1/\ell\ge 1/2.
\end{align}
Recall from \eqref{canheight=az} that the canonical height agrees with the Arakelov--Zhang pairing, which can in turn be expressed as the sum of local energy pairings coming from each place of $K$. More precisely from \cite{FRL06} and \cite{PST12}, we have
\begin{align}
\hat{h}_{(f,\phi_{\ell}),O(1,1)}(\Delta)=(f,\phi_{\ell})_{\mathrm{AZ}}= \sum_{v\in M_K} N_v (\mu_{f,v} - \mu_{\phi_{\ell},v}, \mu_{f,v} - \mu_{\phi_{\ell},v})_v,
\end{align}
where for each $v \in M_K$,  $\mu_{f,v}$ is the equilibrium measure of $f$ on $\berkP$ \cite[Section 6.1]{FRL06}, 
$\mu_{\phi_{\ell},v}$ is the uniform probability measure on the unit circle $S^1$ if $v$ is archimedean and the delta mass at the Gauss point of $\berkA_v$ if $v$ is non-archimedean, and 
$$(\mu, \mu')_v=-\iint_{\berkA \times \berkA \setminus \Diag} \log |z-w| d \mu(z) d \mu'(w)$$
for any two signed measures on $\berkP$. 
Note that for each $v$ the local pairing is non-negative \cite[Proposition 2.6, Proposition 4.5]{FRL06} so that 
\begin{align}\label{lower bound over S}
\hat{h}_{(f,\phi_{\ell}),O(1,1)}(\Delta)\ge \sum_{v\in S} N_v (\mu_{f,v} - \mu_{\phi_{\ell},v} , \mu_{f,v} - \mu_{\phi_{\ell},v})_v,
\end{align}

Now let $v\in S$. 
We will follow the strategy in \cite[\S 3.4]{AY23} to get a lower bound for $(\mu_{f,v}, \mu_{S^1,v})_v$. 
First note that by \cite[Lemma 3.21]{AY23}, and since $v\in S$, if  $j\coloneqq j(f)\in\{1,\ldots,\ell\}$ is the smallest integer such that $a_j\neq 0$, which exists since $f$ is not a power map, then 
\begin{align}\label{lower difference}
\left|\int z^{j} d(\mu_{f,v} - \mu_{S^1,v}) \right| = \frac{j}{\ell} \left|a_{j}\right|_v \geq \frac{1}{\ell C_{\ell}}.
\end{align}
Moreover, by \cite[Lemma 3.19]{AY23} we know that for any smooth function $\psi: \bC\to \bR$ we have 
\begin{align}\label{pairing nabla}
 \left|\int \psi  d(\mu_{f,v} - \mu_{S^1,v}) \right|^2 \leq \|\nabla \psi\|^2_{L^2} (\mu_{f,v} - \mu_{\phi_{\ell},v}, \mu_{f,v} - \mu_{\phi_{\ell},v})_v.
\end{align}
We will now construct an appropriate function $\psi$ that will allow us to infer a lower bound of the energy pairing from \eqref{lower difference} and \eqref{pairing nabla}. 
First let $\psi_0$ be such a smooth function with $\|\nabla \psi_0\|\le 2$ as in Lemma \ref{BumpFunctionEstimate}. 
Then taking $\psi_1 = z^j \psi_0$, we obtain a smooth function that is $z^j$ on $B(0,1)$, $0$ outside of $B(0,2)$ and 
\begin{align*}
\|\nabla \psi_1\|_{L^2}\le 2\sqrt{\pi}\sup_{|z|\le 2} \|\nabla \psi_1\| &=2\sqrt{\pi}\sup_{|z|\le 2} \|z^j \nabla \psi_0 + \psi_0 \nabla z^j\|\le 2\sqrt{\pi}(2^{j+1} + \sqrt{2} j 2^{j-1}).
\end{align*}
Finally, we set $\psi = (3C_{\ell})^j \psi_1(\frac{z}{3C_{\ell}})$ so that 
\begin{align}\label{psi bound}
\|\nabla \psi\|_{L^2} = (3C_{\ell})^{j} \|\nabla \psi_1\|_{L^2} \leq 8 (3C_{\ell})^{j} j 2^{j+1}\le 8 (3C_{\ell})^{\ell} \ell 2^{\ell+1}
\end{align}
Note that $\psi=z^j$ on $B(0, 3C_{\ell})$ and in particular throughout the support of $\mu_{f,v}$ and $\mu_{S^1,v}$ by \cite[Lemma 3.2]{AY23}. Thus combining \eqref{lower difference}, \eqref{psi bound} and \eqref{pairing nabla} we infer
$$\frac{1}{({\ell} C_{\ell})^2} \leq  8^2 (3^2C^2_{\ell})^{\ell} \ell ^24^{\ell+1}(\mu_{f,v} - \mu_{\phi_{\ell},v}, \mu_{f,v} - \mu_{\phi_{\ell},v})_v .$$
Summing over all $v\in S$ and in view of \eqref{lower bound over S} and \eqref{many good places} we infer 
\begin{align}
\hat{h}_{(f,\phi_{\ell}),O(1,1)}(\Delta)\ge \frac{1}{128\cdot 9^{\ell+1}2^{2(\ell+1)}\ell^4 C^{2\ell+2}_{\ell} }\ge \frac{1}{128\ell^4 \cdot 9^{\ell+1}\cdot 2^{30(\ell+1)2\ell(2\ell+2)+2(\ell+1)}}.
\end{align}
Recall that $c_1(1,\ell) = 2 \ell + 1 , ~c_2(1, \ell) = 2 (\ell + 1) (\ell+1)^{2 (\ell+2)}$. 
The proposition then follows after a very crude estimate. 
\end{proof}

\noindent\emph{Proof of Theorem \ref{heightboundfermat}: }In the case of function fields the conclusion follows immediately from Propositions \ref{QuantitativeBound1} and \ref{fewrootsofunity}. Recall that in the case of function fields $c_2(1,\ell)=0$ and $c_1(1,\ell)=2\ell+1$, so the lower bound for $n$ from Proposition \ref{QuantitativeBound1} is satisfied if
\begin{align*}
n\ge 2\cdot 10^6 \cdot \ell^6\ge 144\left(
     \frac{2(4e)^{2}4^2
     (2\ell+1)h(f)}
          {\hat{h}_{(f,\phi_{\ell}),O(1,1)}(\Delta)} +1 \right)^3 
\end{align*}
Now assume that $K_0=\bQ$. 
The result follows from Propositions \ref{QuantitativeBound1}, and \ref{LowerBoundPairingNumberField} once we observe that in this case we can take
\begin{align*}
\begin{split}
n&\ge 2^{1600\ell^3}\ge 144\left(
     (2+6\log 4(4e)^{2}4^2
     2^{510\ell^3} +1 \right)^3\\
     &\ge 144\left(
     (2+6h(4))(4e)^{2}4^2
     \frac{\bigl(c_1(1,\ell)h(f)+c_2(1,\ell)+\tfrac{7}{2}h(4)\bigr)}
          {\hat{h}_{(f,\phi_{\ell}),O(1,1)}(\Delta)} +1 \right)^3
\end{split}
\end{align*}

\qed

\noindent\emph{Proof of Theorem \ref{oberlyff}: }
This follows from Theorem \ref{quantitative Zhang} in view of  Theorem \ref{heightboundfermat}. \qed


\section{A geometric gap principle \`a la Gao--Ge--K\"uhne} \label{sec: Bogomolov}

The goal of this section is to prove Theorem~\ref{uniform_geometric_Bogomolov} and Theorem~\ref{uniform CGHX}.  
Our strategy is to apply Theorem~\ref{quantitative Zhang2}.  
To this end, we must establish three auxiliary results: a “height specialization result,” a “geometric height gap,” and a “geometric height comparison,” proved respectively in 
\S\ref{specialization_av}, \S\ref{gap}, and \S\ref{diff_height}. 
We complete the proofs of Theorems \ref{uniform_geometric_Bogomolov} and \ref{uniform CGHX} in \S\ref{proof of uniform B}.
Throughout this section we freely use Definition/Proposition~\ref{canonical height def/prop}(3), without further mention, in order to invoke properties of the N\'eron–Tate height of subvarieties that are valid for either equivalent definition.
 
Before we begin we recall some background on heights for abelian varieties.

\subsection{Heights of abelian varieties}\label{heights on av}
Unless specified otherwise, we denote by $k$ an algebraically closed field of characteristic zero and by $K=k(B)$ the function field of a projective variety $B$ over $k$ that is normal. 
We also fix an ample line bundle $\cal{M}$ on $B$. 

For $g\ge 2$, let $\mathcal{A}_{g,3}$ denote the fine moduli space of principally polarized abelian varieties of dimension $g$ with level-$3$ structure; it is a quasi-projective variety over $\overline{\mathbb{Q}}$ (see \cite[Ch.~7]{MFK94}). A point of $\mathcal{A}_{g,3}$ corresponds to a triple $(A,\phi_L,\nu)$, where $A$ is an abelian variety of dimension $g$, $\phi_L$ is a principal polarization corresponding to an ample symmetric line bundle $L$ on $A$, and $\nu$ is a level-$3$ structure. 

By Faltings--Chai \cite{FC90}, the space $\mathcal{A}_{g,3}$ admits a minimal compactification $\mathcal{A}_{g,3}^*$ over $\overline{\mathbb{Q}}$, equipped with an ample line bundle $\omega$ (the Hodge bundle). Let $\bm{\omega}\coloneqq \omega^{\otimes M}$ for $M\gg 0$, chosen so that $\bm{\omega}$ is very ample. This yields an embedding
\[
j_{\bm{\omega}}:\mathcal{A}_{g,3}^*\hookrightarrow \mathbb{P}^m
\]
for some $m$, over $\overline{\mathbb{Q}}$. We fix a representative for the Weil height corresponding to $\bm{\omega}$ by
$$h_{\bm\omega}(\cdot)\coloneqq  \frac{1}{M} h_{\bP^m}(j_{\bm{\omega}}(\cdot)).$$  
For the function field $K$, we write $$h_{\bm{\omega}, K}(\cdot)\coloneqq  \frac{1}{M} h_{\bP^m,K}(j_{\bm{\omega}}(\cdot)).$$

Note that for function fields $K$ of transcendence degree one, this height coincides with the stable Faltings height (or differential height) of the underlying abelian variety; see \cite{GLFP25,Fal83,MB85}.  More precisely, for any $(A,\phi_L,\nu)\in\mathcal{A}_{g,3}(\overline{K})$ we have
\begin{align}\label{faltings=modular}
h_{\mathrm{Fal}}(A)
= h_{\boldsymbol{\omega}}\bigl((A,\phi_L,\nu)\bigr)
\coloneqq h_{\boldsymbol{\omega}}(A),
\end{align}
so in particular the height is independent of the choice of principal polarization and level structure. 

Recall that the stable Faltings height is defined as follows.  Let $N/B$ denote the Néron model of $A/K$, and $e:B\to N$ the identity section. Then
$h_{\mathrm{Fal}}(A)
= \deg_B\!\bigl(e^{*}(\wedge^{g}\Omega^{1}_{N/B})\bigr).$
One must replace $K$ by a finite extension over which $A$ acquires semistable reduction; the resulting quantity is independent of the choice of such an extension, hence the name \emph{stable} Faltings height.
Note also that the independence of $h_{\bm\omega}$ of $L,\nu$ remains true for higher dimensional function fields, using the generic curve as in Proposition \ref{prop: generic}.

For a general abelian variety $A$, not necessarily principally polarized, we define 
\[
h_{\bm\omega}(A)\;:=\;h_{\bm\omega}(A_0),
\]
where $A_0$ is any principally polarized abelian variety that is isogenous to $A$.  
This is well defined by \cite[Theorem~A]{GLFP25}, which shows that the height is independent of the choice of such an $A_0$.

\subsection{A height specialization result}\label{specialization_av}

The next tool we require is a generalization of Silverman's specialization theorem \cite{Sil83} to higher-dimensional subvarieties. Such a generalization has been established by Ingram \cite[Theorem~15]{Ing22} for hypersurfaces and by Gauthier--Vigny \cite{GV24} over $\Qbar$. However, for our purposes we will need a version that applies in all dimensions and over arbitrary function fields as well.

\begin{definition}\label{model}
Let $C$ be a smooth projective curve over an algebraically closed field $k$. 
Let $A$ be an abelian variety, $L$ a symmetric ample line bundle on $A$, and $X \subset A$ a closed subvariety, all defined over $k(C)$. 
A \emph{model} of the triple $(A,L,X)$ over $C$ consists of data
$
(U,\, \pi : \mathcal{A} \to U,\, \mathcal{L},\, \mathcal{X}),
$
where
\begin{itemize}
  \item $U \subset C$ is a nonempty Zariski-open subset;
  \item $\pi : \mathcal{A} \to U$ is an abelian scheme over $k$ (i.e. a smooth and proper group scheme with connected fibers) whose generic fiber is isomorphic to $A$;
  \item $\mathcal{L}$ is a line bundle on $\mathcal{A}$ that is relatively ample, symmetric, and restricts to $L$ on the generic fiber;
  \item $\mathcal{X} \subset \mathcal{A}$ is a closed subscheme, flat over $U$, whose generic fiber is $X$.
\end{itemize}
For $s \in U(k)$, we denote the fibers by
$\mathcal{A}_s, \mathcal{L}_s, \mathcal{X}_s$. 
We note that $(A,[2],L)$ and $(\cal{A}_s,[2],\mathcal{L}_s)$ are polarized dynamical systems and we write 
\begin{align*}
 \hat{h}_{A,L}\coloneqq  \hat{h}_{[2],L}   ; ~ \hat{h}_{\cal{A}_s,\cal{L}_s}\coloneqq  \hat{h}_{[2],\cal{L}_s},
\end{align*}
for the canonical heights on subvarieties of $A$ and $\cal{A}_s$ respectively. 
\end{definition}

\begin{proposition}\label{SilvermanSpecialization1}
\label{prop:Silverman-specialization}
Let $K_1$ be either $\Qbar$ or $\overline{\mathbb{Q}}(V)$, the function field of a regular positive-dimensional projective variety $V $ over $\overline{\mathbb{Q}}$. Let $C$ be a smooth projective curve over $K_1$.
Set  $K_2 = \overline{K_1}(C)$. 
Let $A$ be an abelian variety, $L$ a symmetric ample line bundle on $A$, and $X \subset A$ a closed subvariety, all defined over $K_2$. Let
\((U,\mathcal{A},\mathcal{L},\mathcal{X})\)
be a model of $(A,L,X)$ over $U \subset C$. 
Let $h_{C,K_1} : C(\overline{K_1}) \to \mathbb{R}_{\ge 0}$
be a height function associated to an ample divisor on $C$ of degree $1 $.
Then
\begin{equation*}
\lim_{\substack{s \in U(\overline{K_1}) \\ h_{C,K_1}(s) \to \infty}}
\frac{
  \widehat{h}_{\mathcal{A}_s, \mathcal{L}_s, K_1}(\mathcal{X}_s)
}{
  h_{C,K_1}(s)
}
=
\widehat{h}_{A, L}(X).
\end{equation*}
Here $\widehat{h}_{\mathcal{A}_s, \mathcal{L}_s, K_1}$ denotes the Néron–Tate height
on the fiber $\mathcal{A}_s$,
and $\widehat{h}_{A,L}(X)$ is the canonical height of $X$ over $K_2$.
\end{proposition}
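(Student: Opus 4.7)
The plan is to extend Silverman's classical specialization theorem from points on abelian varieties to higher-dimensional subvarieties by reducing to the point case \emph{via the Chow form}. After replacing $L$ by $L^{\otimes N}$ for $N\gg 0$, one may assume $L$ is very ample, and, after possibly shrinking $U$, that the complete linear system of $L$ gives a relative closed immersion $\iota_U:\mathcal{A}\hookrightarrow \mathbb{P}^m_U$ restricting fiberwise to the embedding of each $\mathcal{A}_s$ defined by $\mathcal{L}_s$. For every $n\ge 0$ the flat family $[2]^n\mathcal{X}\subset\mathbb{P}^m_U$ has a well-defined relative Chow form, yielding a morphism $\tau_n:U\to\mathbb{P}^{M_n}$ over $K_1$ whose generic fiber $\tau_n(\eta)\in\mathbb{P}^{M_n}(K_2)$ is the Chow form of $[2]^n X\subset\mathbb{P}^m_{K_2}$.

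For each fixed $n$, the elementary height machinery for morphisms to projective space, applied to $\tau_n$ on the curve $C/K_1$, combined with the identification of the Philippon height of a subvariety of $\mathbb{P}^m$ with the Weil height of its Chow form (\S\ref{sec: heights}), gives
\[
\lim_{\substack{s\in U(\overline{K_1})\\ h_{C,K_1}(s)\to\infty}}
\frac{h_{\mathrm{Ph},K_1}\bigl([2]^n\mathcal{X}_s\bigr)}{h_{C,K_1}(s)}
\;=\;
h_{\mathrm{Ph},K_2}\bigl([2]^n X\bigr).
\]
Setting $r=\dim X$ and dividing both sides by $4^{n(r+1)}\deg_L X$, the right-hand side tends as $n\to\infty$ to $\widehat{h}_{A,L}(X)$ by Definition/Proposition~\ref{canonical height def/prop}, while for each fixed $s$ the analogous Tate limit yields $h_{\mathrm{Ph},K_1}([2]^n\mathcal{X}_s)/(4^{n(r+1)}\deg_L X)\to\widehat{h}_{\mathcal{A}_s,\mathcal{L}_s,K_1}(\mathcal{X}_s)$ as $n\to\infty$.

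The remaining crucial step is to interchange the limits in $n$ and $s$, for which the plan is to prove a \emph{uniform} Tate telescoping estimate of the form
\[
\left|\widehat{h}_{\mathcal{A}_s,\mathcal{L}_s,K_1}(\mathcal{X}_s)
-\frac{h_{\mathrm{Ph},K_1}\bigl([2]^n\mathcal{X}_s\bigr)}{4^{n(r+1)}\deg_L X}\right|
\;\le\;\frac{C_1\,h_{C,K_1}(s)+C_2}{4^n},
\]
with $C_1,C_2$ independent of $s\in U(\overline{K_1})$ and of $n$. Using a relative version of Fakhruddin's theorem applied to $(\mathcal{A},\mathcal{L},[2])$, one extends $[2]$ through $\iota_U$ to an endomorphism $\Psi:\mathbb{P}^m_U\to\mathbb{P}^m_U$ of degree $4$, and then invokes Ingram's estimates \cite[Lemma 6]{Ing22} (as used in the proof of Proposition~\ref{height diff}) fiber by fiber: the resulting comparison constants depend only on the height of the coefficients of $\Psi_s$, which, being values at $s$ of regular functions on $U\subseteq C$, are bounded by $O(h_{C,K_1}(s))$. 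Dividing by $h_{C,K_1}(s)$ and sending first $h_{C,K_1}(s)\to\infty$ for each fixed $n$, then $n\to\infty$, yields the desired equality. The principal technical obstacle is precisely this uniform control of Ingram-type comparison constants across the family, the higher-dimensional and relative analogue of the arguments of Silverman \cite{Sil83}, Ingram \cite{Ing22}, and Gauthier--Vigny \cite{GV24}; one can alternatively bypass it by working directly with the $[2]$-invariant adelic metrized line bundle $\overline{\mathcal{L}}_{[2]}$ on $\mathcal{A}/U$ and invoking Yuan--Zhang's formalism \cite{YZ24} to control the variation of intersection numbers in the family.
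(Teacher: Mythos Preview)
Your approach is essentially the same as the paper's: both use Fakhruddin's theorem to spread out $[2]$ to an endomorphism $\Psi$ of $\bP^m_U$, then combine the elementary Chow-form specialization for the Philippon height with Ingram's explicit bound $|\hat h_{\Psi_s}(\cdot)-\hat h_{\mathrm{Ph}}(\cdot)|\le (r{+}1)(c_1 h(\Psi_s)+c_2)$, and finally kill the resulting constant by applying the estimate to the iterates $[2]^n\mathcal X_s$ and dividing by $4^n$. The paper organizes the endgame slightly differently---it first bounds $\limsup_s\bigl|\hat h_{\phi_s}(\mathcal X_s)/h_C(s)-\hat h_{\phi}(X)\bigr|$ by a constant and then replaces $\mathcal X$ by $[2]^n\mathcal X$ and uses $\hat h_{\phi}\circ\phi^n=4^n\hat h_{\phi}$---whereas you phrase it as a uniform Tate telescoping bound followed by an interchange of limits; these are the same argument.

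One small correction: your normalization factor $4^{n(r+1)}\deg_L X$ is not the right denominator for the non-normalized Philippon height, since $\deg_L([2]^nX)$ need not equal $4^{nr}\deg_L X$ (the discrepancy is $\deg([2]^n|_X)$). The cleanest fix is to work throughout with the \emph{normalized} height $\hat h_{\mathrm{Ph}}=h_{\mathrm{Ph}}/\deg$ and divide only by $4^n$, exactly as in Definition/Proposition~\ref{canonical height def/prop}; the Chow-form specialization still applies because $\deg_L([2]^n\mathcal X_s)$ is constant in $s$ by flatness. With that adjustment your sketch is correct.
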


\begin{proof}
 
Recall that by \cite[Corollary 2.2]{Fak03} there exists an embedding $\iota:A\hookrightarrow \bP^N$, given by a complete linear system of $L$ and a morphism $\phi: \bP^N\to \bP^N$ such that $\phi\circ \iota= \iota\circ [2]$ and note that $\hat{h}_{A,L}(X)=\hat{h}_{\phi}(\iota(X))$ for all subvarieties $X\subset A_{K_2}$. 
Let
\((U,\mathcal{A},\mathcal{L},\mathcal{X})\)
be a model of $(A,L,X)$ over $U \subset C$. 
After possibly shrinking $U$, the embedding $\iota$ and the morphism $\phi$ induce an embedding $\iota_U: \cal{A}\hookrightarrow \cal{A}$ and a morphism 
$\phi_U: U\times \bP^N\to U\times \bP^N$, with generic fibers $\iota$ and $\phi$ respectively and such that for each $t\in U(\overline{K_1})$ we have $\hat{h}_{\cal{A}_t,\cal{L}_t}(\cal{X}_t)=\hat{h}_{\phi_t}(\iota_t(\cal{X}_t)).$ 
We further have that $\deg(\phi_t)=\deg(\phi)$ for each $t$. 
Let $r\coloneqq \dim (\cal{X}_t)=\dim (\iota_t(\cal{X}_t))$ for every $t\in U(\overline{K_1})$ by flatness. 
The proof of the proposition now follows exactly as in \cite[Theorem 15]{Ing22}, which itself extends the arguments in \cite{Sil83} to the higher dimensional case. 
We repeat the key steps here for completeness. 

Let $c_1\coloneqq c_1(N,\deg(\phi))$ and $c_2\coloneqq c_2(N,\deg(\phi))$ be as in Proposition \ref{height diff} and let $c_0\coloneqq 7/2h_{K_1}(N+1)$. 
Applying Proposition \ref{height diff} , by the height comparison in Definition/Proposition \ref{canonical height def/prop}(3) and by \eqref{nf heights compare}, we have 
\begin{align}\label{ingram thing}
    |\hat{h}_{\phi_t,K_1}(\iota_t(\cal{X}_t))- \hat{h}_{\Ph,K_1}(\iota_t(\cal{X}_t))|\le r c_{1} h_{K_1}(\phi_t) + r c_2+ r c_0. 
\end{align}
Let $h_C\coloneqq h_{C,K_1}$ be a height on $C$ as in the statement. 
Note that 
\begin{align}\label{easy spec}
\lim_{h_C(t)\to\infty}\frac{\hat{h}_{\Ph,K_1}(\phi_t)}{h_C(t)}=h_{\Ph,K_2}(\phi),~\lim_{h_C(t)\to\infty}\frac{\hat{h}_{\Ph,K_1}(\iota_t(\cal{X}_t))}{h_C(t)}=\hat{h}_{\Ph, K_2}(\iota(X)),
\end{align}
which is easy to see since the Philippon height of a subvariety is given by the height of its Chow form, which is now a point in projective space. 
From \eqref{ingram thing} and \eqref{easy spec} and the triangular inequality exactly as in \cite{Ing22}, we infer  
\begin{align}\label{to be iterated}
\limsup_{h_C(t)\to\infty}\left|\frac{\hat{h}_{\phi_t}(\iota_t(\cal{X}_t))}{h_C(t)} - \hat{h}_{\phi}(\iota(X))\right|\le rc_1.
\end{align}
Applying \eqref{to be iterated} to the iterates $\phi_t^n(\iota_t(\cal{X}_t))=\iota_t([2^n]\cal{X}_t)$ for all $n$ we get 
\begin{align*}
\limsup_{h_C(t)\to\infty}\left|\frac{\hat{h}_{\phi_t}(\iota_t(\cal{X}_t))}{h_C(t)} - \hat{h}_{\phi}(\iota(X))\right|\le \frac{rc_1}{d^n}.
\end{align*}
The conclusion follows letting $n\to\infty$.
\end{proof}
\subsection{A height gap}\label{gap}
Recall that an abelian variety $A$ is generated by a subvariety $X\subset A$ if it is the smallest
abelian subvariety containing $X-X$.  
The Ueno locus of a subvariety $X\subset A$  is the union of all positive dimensional cosets contained in $X$. Its complement in $X$ is denoted $X^{\circ}$ and is Zariski open; see \cite{Kaw80}. 
We will prove the following result. 

\begin{theorem} \label{GeometricHeightGap2}
Let $g\ge 2$ and $D\ge 1$. There is a constant $c_{\mathrm{GGK}}\coloneqq c_{\mathrm{GGK}}(g,D)>0$  with the following property.  
Let $K$ a characteristic $0$ function field, let $(A,\phi_L,\nu)\in \cal{A}_{g,3}(K)$ and let $X \subseteq A_K$ be an irreducible subvariety that generates $A$ with $\deg_L X \leq D$ and $X^{\circ} \not = 0$. Then 
$$\h_{A,L,K}(X) \geq c_{\mathrm{GGK}} h_{\bm\omega,K}(A).$$
\end{theorem}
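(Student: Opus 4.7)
The plan is to induct on the transcendence degree $t \coloneqq \trdeg(K/\overline{\mathbb{Q}})$, using Proposition \ref{SilvermanSpecialization1} at each step to pass from $t$ to $t-1$. First I would reduce to the case where $A$, $L$, $X$, and the level-$3$ structure $\nu$ are all defined over a finitely generated extension of $\overline{\mathbb{Q}}$ sitting inside $K$, which is harmless. Next, after possibly enlarging $K$ by a finite algebraic extension (which preserves both $\hat{h}_{A,L}(X)$ and $h_{\bm\omega}(A)$ and the hypotheses on $X$), I would realize $K$ as $\overline{K_1}(C)$ for a smooth projective curve $C$ over $K_1$, where $K_1 = \overline{\mathbb{Q}}$ if $t=1$ and $K_1 = \overline{\mathbb{Q}}(V)$ for a normal projective variety $V$ of dimension $t-1$ if $t \geq 2$. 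For $t \geq 2$ this can be arranged by fibering a projective model of $K$ over a base of dimension $t-1$, or by applying the generic-curve construction of \S\ref{sec: generic} (as used in Proposition \ref{prop: generic}).

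I would then fix a model $(U, \mathcal{A}, \mathcal{L}, \mathcal{X})$ of $(A,L,X)$ over an open subset $U \subseteq C$, together with the induced level-$3$ structure on $\mathcal{A} \to U$. The two geometric conditions we need on the fibers, namely that $\mathcal{X}_s$ generates $\mathcal{A}_s$ and that $\mathcal{X}_s^{\circ} \neq \emptyset$, are Zariski-open in $s$: the first by standard openness of the stabilizer/generation locus in an abelian scheme, the second by Kawamata's theorem that the Ueno locus behaves constructibly in flat families. Combined with the fact that $\deg_{\mathcal{L}_s}\mathcal{X}_s = \deg_L X \leq D$ for all $s$ by flatness, this lets me restrict to a dense open $U_0 \subseteq U(\overline{K_1})$ on which the hypotheses of the theorem hold for the fiber $(\mathcal{A}_s, \mathcal{L}_s, \mathcal{X}_s)$ over $\overline{K_1}$.

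For each such $s$, if $t = 1$ (base case) I would apply the arithmetic Gao--Ge--K\"uhne height gap of Theorem \ref{HeightGap} to the specialization over $\overline{\mathbb{Q}}$, while if $t \geq 2$ (inductive step) I would apply the inductive hypothesis over the function field $\overline{K_1}$ of transcendence degree $t - 1$. In either case I obtain
\[
\hat{h}_{\mathcal{A}_s, \mathcal{L}_s, K_1}(\mathcal{X}_s) \;\geq\; c_{\mathrm{GGK}}(g,D)\, h_{\bm\omega, K_1}(\mathcal{A}_s).
\]
Dividing through by $h_{C, K_1}(s)$ and letting $s$ range over $U_0$ with $h_{C, K_1}(s) \to \infty$, the left-hand side tends to $\hat{h}_{A,L,K}(X)$ by Proposition \ref{SilvermanSpecialization1}, while the right-hand side tends to $c_{\mathrm{GGK}}(g,D)\, h_{\bm\omega,K}(A)$ by applying the standard specialization theorem for the Weil height on $\mathbb{P}^m$ to the moduli map $C \dashrightarrow \mathcal{A}_{g,3}^{*} \hookrightarrow \mathbb{P}^m$ attached to $(A, \phi_L, \nu)$. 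This yields the desired inequality with the same constant $c_{\mathrm{GGK}}(g,D)$, propagated unchanged through the induction.

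The hard part will be justifying the openness of the generating and non-empty-Ueno-locus conditions \emph{uniformly} under specialization across arbitrary algebraically closed base fields of characteristic zero — which requires some care with Kawamata's theorem and the stabilizer formalism in families of abelian schemes — and verifying that the modular height specialization $h_{\bm\omega, K_1}(\mathcal{A}_s)/h_{C, K_1}(s) \to h_{\bm\omega, K}(A)$ is indeed implied by the classical Silverman-type specialization of Weil heights of points applied to the moduli morphism. All the resulting constants remain traceable in terms of $c_{\mathrm{DGH}}$ from Theorem \ref{HeightGap} and $c_S$ from Theorem \ref{HeightGeometricCompare1}, as required by the statement.
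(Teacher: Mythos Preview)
Your proposal is correct and follows essentially the same approach as the paper: induction on transcendence degree, spreading out to a model over a curve, checking the hypotheses persist on a dense open of the base, applying Proposition~\ref{HeightGap} at the bottom and the inductive hypothesis higher up, then passing to the limit via Proposition~\ref{SilvermanSpecialization1} on the left and ordinary Weil-height specialization (for the moduli point in $\mathbb{P}^m$) on the right. The paper organizes things slightly differently---it first invokes the generic-curve construction of \S\ref{sec: generic} to reduce once and for all to $\dim B=1$, and then inducts on $\trdeg_{\overline{\mathbb{Q}}}(k)$ of the constant field---but this is cosmetically different from your single induction on $\trdeg_{\overline{\mathbb{Q}}}(K)$. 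For the openness of ``$\mathcal{X}_s$ generates $\mathcal{A}_s$'' and ``$\mathcal{X}_s^\circ\neq\emptyset$'' the paper cites \cite[Lemma~2.5]{GGK21} and Hindry \cite{Hin88} respectively, which sharpen your references to Kawamata and stabilizer formalism; and the constant $c_{\mathrm{GGK}}=c_{\mathrm{DGH}}/(g+1)$ is traced only through $c_{\mathrm{DGH}}$, not $c_S$ (the latter enters only in Theorem~\ref{HeightGeometricCompare1}, not here).
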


First we record the following consequence of Dimitrov--Gao--Habegger's height inequality \cite{DGH21}, as interpreted by Gao--Ge--K\"uhne's \cite[Proposition 4.1, 4.2]{GGK21}. 

\begin{remark}\label{cGGK}
    Our proof shows that $c_{\mathrm{GGK}}$ can be taken as $\frac{c_{\mathrm{DGH}}}{g+1}$, where $c_{\mathrm{DGH}}$ is as in the proof of Proposition \ref{HeightGap}.
\end{remark}
\begin{proposition}\label{HeightGap}
    Let $g\ge 2$ and $D\ge 1$. There are  constants $c_{\mathrm{DGH}}\coloneqq c_{\mathrm{DGH}}(g,D)>0, ~c'_{\mathrm{DGH}}\coloneqq c'_{\mathrm{DGH}}(g,D)$ with the following property. Let $(A,\phi_L,\nu)\in\cal{A}_{g,3}(\Qbar)$, and let $X\subseteq A$ be an irreducible closed subvariety that generates $A$, with $\deg_L(X)\le D$ and $X^{\circ}\neq\varnothing$. Then 
    $$\hat{h}_{A,L}(X)\ge \frac{c_{\mathrm{DGH}}}{g+1} \max\{1,h_{\bm\omega}(A)\}-\frac{c'_{\mathrm{DGH}}}{g+1}.$$
\end{proposition}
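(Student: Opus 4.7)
The plan is to combine the Dimitrov--Gao--Habegger height inequality, in the form provided by Gao--Ge--K\"uhne, with the lower half of Zhang's theorem of successive minima applied to the N\'eron--Tate metrized line bundle on $A$.

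First, I would invoke \cite[Propositions 4.1 and 4.2]{GGK21}, which reformulate \cite[Theorem 1.6]{DGH21} as a uniform lower bound on the essential minimum. Under our hypotheses---namely $(A,\phi_L,\nu)\in\cal{A}_{g,3}(\Qbar)$, $X\subseteq A$ irreducible generating $A$, $\deg_L X\le D$, and $X^{\circ}\ne\varnothing$---this produces constants $c_{\mathrm{DGH}}=c_{\mathrm{DGH}}(g,D)>0$ and $c'_{\mathrm{DGH}}=c'_{\mathrm{DGH}}(g,D)\ge 0$, depending only on $g$ and $D$, such that
\[
e(\overline{L}_{[2]},X)\;\ge\; c_{\mathrm{DGH}}\,\max\{1,h_{\bm\omega}(A)\}\;-\;c'_{\mathrm{DGH}},
\]
where $\overline{L}_{[2]}$ denotes the $[2]$-invariant adelic metric on $L$, whose induced height on points is precisely the N\'eron--Tate height $\hat{h}_{A,L}$. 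The identification $h_{\bm\omega}(A)=h_{\mathrm{Fal}}(A)$ is via \eqref{faltings=modular} and the discussion in \S\ref{heights on av}.

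Next, I would convert this lower bound on the essential minimum into a lower bound on $\hat{h}_{A,L}(X)$ itself, using Zhang's theorem of successive minima \cite{Zha98}. If $e(\overline{L}_{[2]},X)=e_1\ge e_2\ge\cdots\ge e_{\dim X+1}$ denote the successive essential minima of $\overline{L}_{[2]}$ on $X$, then the lower half of Zhang's inequality reads
\[
(\dim X+1)\,\hat{h}_{A,L}(X)\;\ge\;\sum_{i=1}^{\dim X+1} e_i.
\]
Since the N\'eron--Tate height of any point of $A$ is non-negative, every $e_i$ is non-negative, and therefore the right-hand side is at least $e_1=e(\overline{L}_{[2]},X)$. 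Combining this with the previous display and using $\dim X\le g$, we obtain
\[
\hat{h}_{A,L}(X)\;\ge\;\frac{e(\overline{L}_{[2]},X)}{g+1}\;\ge\;\frac{c_{\mathrm{DGH}}}{g+1}\max\{1,h_{\bm\omega}(A)\}\;-\;\frac{c'_{\mathrm{DGH}}}{g+1},
\]
which is exactly the claim.

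The proof itself is short once these two ingredients are in hand; the main content is verifying that the GGK reformulation of DGH yields constants that depend only on $g$ and $D$ and are uniform across $\cal{A}_{g,3}(\Qbar)$. It is worth noting that the dimensional factor $\dim X\le g$ in the successive-minima step is precisely where the $1/(g+1)$ appearing both in the statement and in the constant $c_{\mathrm{GGK}}=c_{\mathrm{DGH}}/(g+1)$ of Remark \ref{cGGK} originates.
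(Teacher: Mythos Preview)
Your proposal is correct and follows essentially the same two-step approach as the paper: invoke \cite[Propositions~4.1, 4.2]{GGK21} to bound the essential minimum from below, then apply Zhang's successive minima inequality (with non-negativity of the N\'eron--Tate height and $\dim X\le g$) to deduce the claimed lower bound on $\hat{h}_{A,L}(X)$. The only cosmetic difference is that the paper spells out the intermediate step---small points of $X^{\circ}$ lie in a proper closed $X'\subsetneq X$, so $X^{\circ}\setminus X'$ is a nonempty Zariski open (using \cite{Kaw80})---before reading off the essential-minimum bound, whereas you package this directly as a bound on $e(\overline{L}_{[2]},X)$.
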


\begin{proof}
By \cite[Proposition 4.1, 4.2]{GGK21} we can find positive constants $c_{\mathrm{DGH}}\coloneqq c_{\mathrm{DGH}}(g,D),~c'_{\mathrm{DGH}}\coloneqq c'_{\mathrm{DGH}}(g,D)$ and $c_3 = c_3(g, D)$ such that for all $g$-dimensional abelian varieties $A$ and subvarieties $X\subset A$ as in the statement,  the set 
$\{P \in X^{\circ}(\ovl{\bb{Q}}) \mid \h_{A,L}(P) \leq c_{\mathrm{DGH}} \max\{h_{\bm\omega}(A),1\}-c'_{\mathrm{DGH}}\}$
is contained in some Zariski closed set $X'\subsetneq X$ with $\deg_L(X')\le c_3$. In particular,  
\begin{align*}
    \inf_{P\in X^{\circ}\setminus X'}\h_{A,L}(P) \ge c_{\mathrm{DGH}}\max\{h_{\bm\omega}(A),1\}-c'_{\mathrm{DGH}}.
\end{align*}
Since $X^{\circ}$ is Zariski open by \cite{Kaw80}, Zhang's inequality \cite[Theorem 1.10]{Zha95b} implies that 
\begin{align*}
\hat{h}_{A,L}(X)\ge c_{\mathrm{DGH}}/ (g+1) \max\{h_{\bm\omega}(A),1\}-c'_{\mathrm{DGH}}/(g+1).
\end{align*}
The result follows. 
\end{proof}

\begin{remark}\label{HeightGapRemark}
In fact, Gao--Ge--K\"uhne's results in their full strength would allow us to choose $c'_{\mathrm{DGH}}=0$. However, we do need this stronger version and in particular K\"uhne's equidistribution result \cite{Kuh21}.
\end{remark}

\noindent\emph{Proof of Theorem \ref{GeometricHeightGap2}:}
Let $K=k(B)$ where $k$ is an algebraically closed field of characteristic zero and $B$ is a regular projective variety over $k$. 
Recall that as in \S\ref{sec: generic}, there is a transcendental extension $k''$ of $k$ and a curve $B''$ over $k''$ such that 
$\h_{A,L,K}(X)= \h_{A',L',k''(B'')}(X')$ and  $h_{\bm\omega,K}(A)=h_{\bm\omega,k''(B'')}(A')$ where $A'$, $L'$, $X'$ are appropriate base extensions of $A,L,X$; see Proposition \ref{prop: generic}. 
We may thus assume that $\dim B=1$. 
It is also clear that we may assume that $k$ is a finitely generated extension of $\Qbar$. 
We will argue by induction the transcendence degree of $k$ over $\Qbar$.

First assume that $k=\Qbar$ for the base case. 
Let $(A,\phi_L,\nu)\in \cal{A}_{g,3}(\Qbar(B))$ and let $X \subseteq A_{\Qbar(B)}$. 
Choose a model $(U,\, \pi : \mathcal{A} \to U,\, \mathcal{L},\, \mathcal{X})$ over $B$. 
Since $X^{\circ} \neq \emptyset$, we also have $\cal{X}_s^{\circ} \not = \emptyset$ by Hindry \cite{Hin88} after possibly shrinking $U$. 
Since $X$ generates $A$, by  \cite[Lemma 2.5]{GGK21}, if we shrink $U$ so that the restriction of $\cal{A}$ to $U$ is a principally polarized abelian scheme, we have that $\cal{A}|_U = (\cal{X}-\cal{X}) + \cdots + (\cal{X}-\cal{X})|_U$ where we add $g$ copies of $\cal{X}-\cal{X}$ fiberwise. 
Thus specializing to any $t \in U$ gives us $\cal{A}_t$ is generated by $\cal{X}_t$ 
and we may apply Proposition \ref{HeightGap} to infer that 
\begin{align}\label{gap on fibers}
\h_{\cal{A}_t, \cal{L}_t}(\cal{X}_t) \geq \frac{c_{\mathrm{DGH}}}{g+1} \max\{h_{\omega}(\cal{A}_s),1\}-\frac{c'_{\mathrm{DGH}}}{g+1},
\end{align}
for all $t\in U$. 
Fix a Weil height $h_B$ on $B$ corresponding to a degree $1$ divisor. 
From \eqref{gap on fibers} and \cite[Main Theorem]{GV24}, we infer that 
\begin{align}
\hat{h}_{A,L,\Qbar(B)}(X)=\lim_{\substack{h_B(t) \to \infty \\ t \in U(\Qbar)}}\frac{\hat{h}_{\cal{A}_t, \cal{L}_t}(\cal{X}_t)}{h_{B}(t)}\ge  \frac{c_{\mathrm{DGH}}}{g+1} \lim_{h_B(t)\to\infty} \frac{h_{\bm\omega}(\cal{A}_t)}{h_B(t)}= \frac{c_{\mathrm{DGH}}}{g+1} h_{\bm\omega,\Qbar(B)}(A). 
\end{align}
The conclusion follows. 

Now suppose that the result holds for all $k$ with $\mathrm{trdeg} k=n\ge 0$. 
Let $k_{n+1}$ be a finitely generated extension of $\Qbar$ with transcendence degree $n+1$, so that $k_{n+1}=\Qbar(V_{n+1})$ for a normal projective variety $V_{n+1}$, irreducible over $\Qbar$ and of dimension $n+1$; see e.g \cite[Lemma 1.4.10]{BG06}.  
Fix $n$ elements $t_1,\ldots,t_n\in \Qbar(V_{n+1})$  that are algebraically independent, and let $k_n$ denote the algebraic closure of $\Qbar(t_1,\ldots,t_n)$ in $\Qbar(V_{n+1})$. 
It is easy to see that $k_n$ is a finitely generated extension of $\Qbar$ and can be written as $k_n=\Qbar(V_n)$ for a regular projective $n$-dimensional variety $V_n$. 
Furthermore, $\Qbar(V_{n+1})$ is a finite extension of $k_n$ of transcendence degree $1$, so that 
$\Qbar(V_{n+1})=k_n(C)$ for a regular, geometrically irreducible, projective curve $C$ over $k_n$.  

Consider now $(A,\phi_L,\nu)\in \cal{A}_{g,3}((k_n(C))(B))$, let $X \subseteq A_{(k_n(C))(B)}$ and choose a model $(U,\, \pi : \mathcal{A} \to U,\, \mathcal{L},\, \mathcal{X})$ over $B$. 
Recall that $B$ is a regular irreducible projective curve over $k_n(C)$ by assumption. 
So for each $t\in U(\overline{k_n(C)})$ the fiber $A_t$ is a principally polarized abelian variety with polarization given by $L_t$, $X_t$ and both are defined over the residue field of $t$ in $U$ which is a finite extension of $k_n(C)$. 
Shrinking $U$ as before we may also ensure that $X_t$ generates $A_t$. 
Applying the induction hypothesis we have 
\begin{align}\label{gap induction}
\h_{\cal{A}_t, \cal{L}_t, \ovl{k_n(C)}}(\cal{X}_t) \geq \frac{c_{\mathrm{DGH}}}{g+1}h_{\omega, \ovl{k_n(C)}}(\cal{A}_t),
\end{align}
for all $t\in U(\overline{k_n(C)})$. 
Choosing a degree one height on $B$ from viewing $B$ as a curve over $k_n(C)$ and applying \eqref{gap induction} to parameters of growing height as in the base case, we infer by our specialization Proposition \ref{specialization_av} that 
\begin{align}
\hat{h}_{A,L,(\overline{k_n(C)})(B)}(X)\ge \frac{c_{\mathrm{DGH}}}{g+1} h_{\bm\omega,(\overline{k_n(C)})(B)}(A). 
\end{align}
The result follows by induction. \qed

\subsection{A height comparison}\label{diff_height}
Here we establish a comparison between the N\'eron--Tate height on an abelian variety and the Weil height associated to an embedding in projective space. 
In the case of number fields such a comparison is done explicitly in \cite{DP02} using the theory of algebraic theta functions by Mumford \cite{Mum66}. 
It should be possible to extend their proofs to the setting of function fields but as we do not need explicit estimates, we instead opt for a softer approach inspired by Silverman's specialization result \cite{Sil83} and Dimitrov--Gao--Habegger \cite{DGH21}. 

\begin{theorem} \label{HeightGeometricCompare1}
Let $g\ge 2$. There exist $N\coloneqq N(g)\in\bN$ and $c_{\mathrm{S}}\coloneqq c_{\mathrm{S}}(g)>0$ such that for every characteristic zero function field $K$, and every $(A,\phi_L,\nu)\in \cal{A}_{g,3}(\ovl{K})$, there is an embedding $\iota_A: A\hookrightarrow \bP^N$ such that $\iota_A^{*}O(1)\iso L$ and such that 
$$\sup_{x\in  A(\ovl{K})}|\hat{h}_{A,L,K}(x) - h_K(\iota_A(x))| \leq c_{\mathrm{S}} h_{\bm\omega,K}(A).$$
\end{theorem}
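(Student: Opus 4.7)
The strategy is to reduce Theorem \ref{HeightGeometricCompare1} to a uniform bound on the height of a universal "multiplication by two" morphism via Proposition \ref{height diff}, and to obtain the bound by combining David--Philippon's arithmetic estimates with the specialization apparatus used throughout this paper.

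\textbf{Step 1: Universal embedding.} Choose $k_0=k_0(g)$ (e.g.\ $k_0=4$) so that for every $(A,\phi_L,\nu)\in\mathcal{A}_{g,3}$ the line bundle $L^{\otimes k_0}$ is very ample with $h^0(L^{\otimes k_0})=k_0^g$. Since $\mathcal{A}_{g,3}$ is fine, the universal abelian scheme $\mathfrak{U}\to\mathcal{A}_{g,3}$ together with $\mathcal{L}_{\mathrm{univ}}^{\otimes k_0}$ admits a canonical closed immersion
\[
\iota_{\mathrm{univ}}\colon \mathfrak{U}\hookrightarrow \mathcal{A}_{g,3}\times\mathbb{P}^N,\qquad N=N(g)=k_0^g-1,
\]
whose fiber over any $s\in\mathcal{A}_{g,3}(\overline{K})$ is the sought-for $\iota_A$. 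Replacing $L$ by $L^{\otimes k_0}$ multiplies $\hat{h}_{A,L,K}$ by $k_0^2$, a factor absorbed into the final $c_S$.

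\textbf{Step 2: Dynamical reformulation.} Via $\iota_{\mathrm{univ}}$, the morphism $[2]\colon \mathfrak{U}\to\mathfrak{U}$ determines a rational self-map $\Phi_{\mathrm{univ}}\colon \mathcal{A}_{g,3}\times\mathbb{P}^N\dashrightarrow \mathcal{A}_{g,3}\times\mathbb{P}^N$ of fiberwise degree $4$. Its fiber $\Phi_s$ at $s$ satisfies $\Phi_s\circ \iota_s=\iota_s\circ[2]$, and Definition/Proposition~\ref{canonical height def/prop}(3) yields $\hat{h}_{\Phi_s,K}(\iota_s(x))=\hat{h}_{A,L,K}(x)$ for all $x\in A(\overline{K})$. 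Viewing $\Phi_s$ as an endomorphism of $\mathbb{P}^N$ of degree $4$ (at worst after a resolution of finite degree, absorbed into $c_S$), Proposition~\ref{height diff} delivers
\[
\sup_{x\in A(\overline{K})}\bigl|\hat{h}_{A,L,K}(x)-h_K(\iota_A(x))\bigr|\;\le\; c_1(N,4)\,h_K(\Phi_s)+c_2(N,4)+\tfrac{7}{2}h_K(N+1).
\]
It therefore suffices to prove the following claim: there exist $c(g)$, $c'(g)$ such that $h_K(\Phi_s)\le c(g)\,h_{\bm{\omega},K}(A)+c'(g)$ for every characteristic-zero function field $K$ and every $s\in\mathcal{A}_{g,3}(\overline{K})$.

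\textbf{Step 3: Uniform control of $h_K(\Phi_s)$ and the main obstacle.} We establish the claim by induction on the transcendence degree of the constant field of $K$ over $\overline{\mathbb{Q}}$, mirroring the proof of Theorem~\ref{GeometricHeightGap2}. The generic-curve construction of \S\ref{sec: generic} (Proposition~\ref{prop: generic}) preserves both $h_K(\Phi_s)$ and $h_{\bm{\omega},K}(A)$ under the appropriate identifications, so the inductive step reduces the claim to a function field of a curve; a Silverman-type specialization along a suitable family then further reduces to fibers. The base case $K=\overline{\mathbb{Q}}$ is precisely the arithmetic height bound made explicit in David--Philippon~\cite{DP02}: the coefficients of $\Phi_s$ arise as values at $s$ of algebraic theta functions of bounded weight, i.e.\ sections of a fixed power of the Hodge bundle $\bm{\omega}$ on $\mathcal{A}_{g,3}^*$, so their heights are controlled by $h_{\bm{\omega}}(A)$. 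The \emph{main obstacle} is precisely this step: controlling the extension of the universal dynamical system $\Phi_{\mathrm{univ}}$ across the boundary $\mathcal{A}_{g,3}^*\setminus \mathcal{A}_{g,3}$ by $\bm{\omega}$. Over $\overline{\mathbb{Q}}$ this is furnished by David--Philippon; for higher transcendence degree it must be transported along the specialization argument, which requires that each of the intermediate height comparisons in the induction remain uniform in $s$. Once this is in place, the theorem follows by combining Steps 1 and 2 with the inductive estimate on $h_K(\Phi_s)$.
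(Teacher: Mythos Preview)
Your overall architecture---set up a universal embedding over $\mathcal{A}_{g,3}$, establish the inequality over $\overline{\mathbb{Q}}$, and then transport it to arbitrary characteristic-zero function fields by the generic-curve construction and induction on transcendence degree---is exactly what the paper does. The difference lies in how the base case is obtained, and there your detour through Proposition~\ref{height diff} introduces a genuine gap.

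Proposition~\ref{height diff} applies only to an honest endomorphism $\Phi$ of $\mathbb{P}^N$: Ingram's local estimates depend on the resultant $\Res(\tilde{\Phi})$ being nonzero. But multiplication by $2$ on $\iota_s(A)\subset\mathbb{P}^N$ is only a rational self-map of $\mathbb{P}^N$; it does not extend naturally to a morphism. Your phrase ``at worst after a resolution of finite degree'' does not address this. Fakhruddin's lemma does yield \emph{some} extension $\Phi_s$ (this is how the paper handles general polarized endomorphisms in Proposition~\ref{reduction_to_pn}), but the extension is non-canonical: it depends on a choice of lift along the surjection $H^0(\mathbb{P}^N,O(4))\twoheadrightarrow H^0(A,L^{\otimes 4})$, and on a generic $M\in\GL_{N+1}$ needed to avoid the base locus. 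Controlling $h_K(\Phi_s)$ uniformly in terms of $h_{\bm{\omega},K}(A)$ therefore requires producing a \emph{family} of such extensions over $\mathcal{A}_{g,3}$ with coefficients extending to sections of a fixed power of $\bm{\omega}$ on the compactification---a step you assert but do not supply, and which is not what \cite{DP02} actually proves.

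The paper sidesteps this entirely: for the base case it quotes \cite[Theorem~A.1]{DGH21}, which already gives
\[
|\hat{h}_{A_s,\mathcal{L}_s}(x)-h(\iota(x))|\le c_S\max\{1,h_{\bm{\omega}}(A_s)\}
\]
directly, without any reference to an ambient endomorphism of $\mathbb{P}^N$. (The paper also notes that \cite{DP02} gives an explicit version over number fields via Mumford's theta machinery---but again as a direct height comparison, not via a bound on $h(\Phi_s)$.) Once this is in hand, the induction proceeds exactly as you outline. The fix is therefore simple: drop Step~2, cite \cite[Theorem~A.1]{DGH21} (or \cite{DP02}) for the base case, and keep your Step~3 induction. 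The additive constant in $\max\{1,\cdot\}$ disappears in the limit when you divide by $h_C(t)\to\infty$, which is why the final statement has no ``$+c'(g)$''.
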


\begin{proof}
We recall the setup in \cite[Section 6]{DGH21}.
 We let $\frak{A}_g$ be the universal abelian scheme over $\cal{A}_{g,3}$. Then Dimitrov--Gao--Habegger construct an embedding $\iota: \frak{A}_g \xhookrightarrow{} \cal{A}_{g,3} \times \bb{P}^n$ over $\ovl{\bb{Q}}$ \cite[Section 6]{DGH21}. 
 Note also that there is a universal symmetric ample line bundle $\cal{L}$ on $\frak{A}_g$ \cite[Section 6.1]{DGH21} that provides us a fiber-wise Neron--Tate height $\h_{A_s,\cal{L}_s}$ on each fiber of $s \in \cal{A}_{g,3}$. 
The restriction $\cal{L}_s$ to the point $s$ defined by the ppav $(A,L)$ satisfies
$$\h_{A,\cal{L}_s} = 2N \h_{A,L}$$
where $N \geq 4$ is a fixed integer. 
We will take $N = 4$ for our construction. 
Let $K = k(B)$ be the function field of some projective normal variety $B$ over the algebraically closed field $k$. Let $A/K$ be a principally polarized abelian variety, which after passing to a finite extension we may assume comes with a level $3$-structure. 
This defines for us a $K$-point on $\cal{A}_{g,3}$. 
The embedding $\frak{A}_g \xhookrightarrow{} \cal{A}_{g,3} \times \bb{P}^n$ gives us an embedding $\iota: A \xhookrightarrow{} \bb{P}^n_K$, where the restriction of $O(1)$ gives us a symmetric very ample line bundle $L$ on $A$. 
We may then define the Neron--Tate height $\h_{A,L}$ with respect to $L$. Then under $\iota$, by \cite[Theorem A.1]{DGH21}, there exists a constant $c > 0$ depending only on $g$ such that 
\begin{equation} \label{eq: HeightCompare1}
|\h_{A_s,\cal{L}_s}(x) - h(\iota(x))| \leq c_{S} \max\{1, h_{\omega}(A_s)\}
\end{equation}
for all $s \in \cal{A}_{g,3}(\ovl{\bb{Q}})$ and $x \in A_s(\ovl{\bb{Q}})$. We now wish to deduce the same inequality for all $s \in \cal{A}_{g,3}(K)$ and $x \in A_s(\ovl{K})$ where $K$ is a function field over characteristic zero. This follows from a similar argument to Theorem \ref{GeometricHeightGap2} by the transcendental generic curve of \S\ref{sec: generic}, and then inducting on the transcendence degree. We will not repeat the argument here as it is very similar. 
\end{proof}

\subsection{Proof of Theorems \ref{uniform_geometric_Bogomolov}  and \ref{uniform CGHX}}\label{proof of uniform B}

\leavevmode

\medskip

\noindent\emph{Proof of Theorem \ref{uniform_geometric_Bogomolov}:}
Let $A$ be an abelian variety over $K$ with an ample line bundle $L$ and let $X\subset A_{\overline{K}}$ be an irreducible subvariety that generates $A$ with $\deg_L(X)\le D$ and $X^{\circ}\neq \emptyset$. We first handle the case where $A$ is principally polarized by $L$ and has level $3$-structure. In this case, Theorem \ref{GeometricHeightGap2} gives us 
$$\h_{A,L}(X) \geq \frac{c_{\mathrm{DGH}}}{g+1} h_{\omega,K}(A).$$
Since $A$ is non-isotrivial, we have that $h_{\omega,K}(A) > 0$. By Theorem \ref{HeightGeometricCompare1}, we have an embedding to projective space $\iota: A \xhookrightarrow{} \bb{P}^N$ such that 
$$\sup_{x \in A(\ovl{K})} |\h_{A,L,K}(x) - h_K(\iota(x))| \leq c_{\mathrm{S}} h_{\omega}(A).$$
We are thus in the setting to apply Theorem \ref{quantitative Zhang2} which gives us the result in the case of principally polarized abelian varieties with level $3$-structure,

To handle the general case, recall by \cite[Lemma 2.2]{GGK21}, that after replacing $K$ with a finite extension, there exists an abelian variety $A_0/K$, an ample line bundle $L_0$ on $A_0$ which gives a principal polarization and an isogeny $u_0: A \to A_0$ such that $u_0^* L_0 \simeq L$, with $\deg(u_0)=\deg_L(A)/g!$. Note that $\deg(u_0)$ is uniformly bounded in terms of $g$ and $D$, since by \cite[Lemma 2.5]{GGK21} and our assumptions we know that $\deg_L(A)$ is bounded solely in terms of $g$ and $D$. We may further assume that $A_0$ has level $3$-structure. It is easy to see that $(u_0(X))^{\circ}\neq \emptyset$ and by the projection formula we also have 
$$\deg_{L_0}(u_0(X)) \leq \deg_L(X) \le D.$$
Since we have established Theorem \ref{uniform_geometric_Bogomolov} for ppavs with level $3$-structure, we thus obtain positive constants $c' = c'(g,D)$ and $E'\coloneqq E(g,D)\in\bR$ and a strict subvariety $Y' \subset u_0(X)$ with $\deg_{L_0} Y' \leq E'$ such that the set 
$$\{x \in u_0(X)(\ovl{K}) \mid \h_{A_0,L_0}(x) \leq c h_{\Fal}(A_0)\},$$
is contained in $Y'$. 
Since by definition, $h_{\Fal}(A) = h_{\Fal}(A_0)$ and also as $u_0$ is an isogeny, it follows that 
$$\h_{A,L}(x) = (\deg u_0) \h_{A_0,L_0}(u_0(x)),$$
for all $x\in A(\Kbar)$. Setting $Y\coloneqq u_0^{-1}(Z') \cap X$, and noting that $\deg_L(Y) \leq \deg_L(u_0^{-1}(Y')) \cdot  \deg_L(X)$ which is bounded purely in terms of $g$ and $D$, the result follows. 
\qed

\bigskip

\noindent\emph{Proof of Theorem \ref{uniform CGHX}:}
We proceed by induction on $\dim V$. Let $B$ be the abelian subvariety that $V$ generates. 
Then $B$ is defined over a finite extension of $K$ that is bounded in terms of $g$. 
Since we have also assumed that $\Tr_{\ovl{K}/\bb{C}}(A) = 0$ and that $M$ is very ample, we infer that there exists $\delta = \delta(g,K)$ such that 
$$h_{\Fal}(B) \geq \delta;$$
see for example \cite[Lemma 4.9]{Yua24}. By Theorem \ref{uniform_geometric_Bogomolov}, we know there exists $\eps' = \eps'(g,D)$ and $D' = D'(g,D)$ such that for any $W$ generating $B$ which is not equal to $B$, all points of $W^{\circ}$ with canonical height at most $(\eps')\delta$ are contained in a subvariety of $W$ with degree at most $D'$.  
Let $\eps = \frac{\eps'}{4}$. If $V^{\circ}$ has no point of canonical height at most $\eps$, then we are done. 
Otherwise, let $p\in V^{\circ}$ be a point of height at most $\eps$. 
Then $W:=V-p \subseteq B$. 
If $W= B$, then $V^{\circ} = \emptyset$ and we are done. If $W \not = B$, then all points of height at most $\eps'$ are contained in a union of irreducible subvarieties $\{W_i\}$, whose total degree is at most $D'$. 
Note that $W^{\circ} + p = V^{\circ}$. Hence the points of height $< \eps'$ in $V^{\circ}$ are necessarily contained in $p+W_i$. We are then done by induction as $V^{\circ} \cap (p+ W_i) \subseteq (p + W_i)^{\circ}$. If $W = B$ then $V = p+B$ and $V^{\circ} = \emptyset$ and we are done too. 
\qed

\bibliographystyle{hplain}

\end{document}